\newtheorem{theorem}{Theorem}
\newtheorem{definition}[theorem]{Definition}
\newtheorem{lemma}[theorem]{Lemma}
\newtheorem{corollary}[theorem]{Corollary}
\newtheorem{proposition}[theorem]{Proposition}
\newtheorem{remark}[theorem]{Remark}
\numberwithin{equation}{section}
\numberwithin{theorem}{section}
\renewcommand{\epsilon}{\varepsilon}
\renewcommand{\rho}{\varrho}
\DeclareMathOperator*{\esssup}{ess\,sup}
\def\Xint#1{\mathchoice
{\XXint\displaystyle\textstyle{#1}}%
{\XXint\textstyle\scriptstyle{#1}}%
{\XXint\scriptstyle\scriptscriptstyle{#1}}%
{\XXint\scriptscriptstyle\scriptscriptstyle{#1}}%
\!\int}
\def\XXint#1#2#3{{\setbox0=\hbox{$#1{#2#3}{\int}$ }
\vcenter{\hbox{$#2#3$ }}\kern-.6\wd0}}
\def\dashint{\Xint-}
\def\Yint#1{\mathchoice
    {\YYint\displaystyle\textstyle{#1}}%
    {\YYint\textstyle\scriptstyle{#1}}%
    {\YYint\scriptstyle\scriptscriptstyle{#1}}%
    {\YYint\scriptscriptstyle\scriptscriptstyle{#1}}%
      \!\iint}
\def\YYint#1#2#3{{\setbox0=\hbox{$#1{#2#3}{\iint}$}
    \vcenter{\hbox{$#2#3$}}\kern-.51\wd0}}
\def\longdash{{-}\mkern-3.5mu{-}}
\def\fiint{\Yint\longdash}
\numberwithin{equation}{section}
\begin{document}

\title[Higher integrability for obstacle problem ]%
{Higher integrability for obstacle problem related to the singular porous medium equation
}
\author[Qifan Li]%
{Qifan Li*}

\newcommand{\acr}{\newline\indent}

\address{\llap{*\,}Department of Mathematics\acr
                   School of Sciences\acr
                   Wuhan University of Technology\acr
                   430070, 122 Luoshi Road,
                   Wuhan, Hubei\acr
                   P. R. China}
\email{qifan\_li@yahoo.com, qifan\_li@whut.edu.cn}

\subjclass[2010]{35K65, 35K67, 35K92, 35B45.} 
\keywords{Obstacle problem, porous medium equation, quasilinear parabolic equation, self-improving property.}

\begin{abstract}
This paper is concerned with the self-improving property for obstacle problem related to the singular porous medium equation.
We establish a local higher integrability result for the spatial gradient of the $m$-th power of nonnegative weak solutions,
under some suitable regularity assumptions on the obstacle function.
\end{abstract}
\maketitle
\section{Introduction}
We are concerned in this paper with the
self-improving property for the gradient of nonnegative weak solutions to the obstacle problems related to the porous medium equation.
The porous medium equation
\begin{equation*}
\partial_t u-\Delta u^m=0,\qquad m>0,\end{equation*}
is an important prototype of nonlinear diffusion equation.
This kind of equation can be derived from modelling the flow of isentropic gas through a porous medium, models for groundwater infiltration
or heat radiation in plasmas (see for instance \cite[Chapter 2]{V}).

B\"ogelein, Lukkari and Scheven \cite{BLS} introduced the concept of obstacle problem related to the porous medium equation.
This kind of obstacle problem is a variational inequality
subject to a constraint that the solution should lie above a given obstacle function.
In \cite{BLS} the authors established the existence and uniqueness results for the strong and weak solutions to the obstacle problem.
Subsequently, the same authors \cite{BLS2017} obtained
a local H\"older continuity result of nonnegative weak solutions in the degenerate case
$m\geq 1$. In the fast diffusion range $\frac{(n-2)_+}{n+2}<m<1$, Cho and Scheven \cite{CS18} established the local H\"older continuity result for the
nonnegative weak solutions. Recently, Cho and Scheven \cite{CS19} proved the higher integrability of signed weak solutions to
the obstacle problems
in the degenerate range $m\geq 1$. Motivated by this work, we will study the higher integrability of nonnegative weak solutions to the obstacle problems
in the fast diffusion range $\frac{(n-2)_+}{n+2}<m<1$. This problem is at present far from being solved.

The higher integrability for the solutions of parabolic systems was first studied by Kinnunen and Lewis \cite{KL1,KL2}.
The treatment of the porous medium type equations is much more difficult. The higher integrability result for porous medium equations has been established by Gianazza and Schwarzacher \cite{GS2019,GS}; see also \cite{BDKS,BDKS2} for the case of porous medium systems.
For the treatment of obstacle problem related to the singular porous medium equation, our proof closely follows the scheme of \cite{GS}. We shall work with the sub-intrinsic cylinders constructed in
\cite{GS}.
In order to obtain gradient estimates on intrinsic cylinders,
we will distinguish between the degenerate case and the non-degenerate case.
Combining energy estimates, gluing lemma and the parabolic Sobolev inequality, we
establish a reverse H\"older inequality for the gradient of the $m$-th power of solutions on each intrinsic cylinder. The main difficulty in our proof is the treatment of the obstacle function. In order to obtain a suitable $L^\infty$ bound for the solution, we have to impose a condition
that $\psi^m$ is locally Lipschitz continuous, where $\psi$ is an obstacle function.
We also assume that the time derivative $\partial_t\psi^{1-m}$, that appears in the gluing lemma, is locally bounded.
Furthermore, we use a certain stopping time argument for the covering of the superlevel set of the gradient.
Contrary to the argument in \cite[section 7]{GS}, we use a localized maximal function instead of the strong maximal function, since the
localized version can be adapted to address obstacle problems.

The present paper is built up as follows. In \S 2, we set up notations and state the main result. \S 3 presents some preliminaries and
we explain the construction of the
sub-intrinsic cylinders. In \S 4, we establish the energy estimates, while in \S 5 we prove a gluing lemma which
describes the difference of two spatial averages. In \S 6, we establish the intrinsic reverse H\"older
inequalities for the gradient on intrinsic cylinders.
Finally the proof of the main result is presented in \S 7.
\section{Statement of the main result}
In the present section, we introduce the notations and give the statement of the main result.
Throughout the paper, we assume that $\Omega$ is a bounded domain in $\mathbb{R}^n$ with $n\geq 2$. For $T>0$,
let $\Omega_T$ denote the space-time cylinder $\Omega\times(0,T)$. Given a point $z_0=(x_0,t_0)\in \mathbb{R}^{n+1}$
and two parameters $r$, $s>0$, we set $B_r(x_0)=\{x\in\mathbb{R}^n:\ |x-x_0|<r\}$, $\Lambda_s(t_0)=(t_0-s,t_0+s)$
and $Q_{r,s}(z_0)=B_r(x_0)\times \Lambda_s(t_0)$. If the reference point $z_0$ is the origin, then we simply write
$B_r$, $\Lambda_s$ and $Q_{r,s}$ for $B_r(0)$, $\Lambda_s(0)$ and $Q_{r,s}(0)$. In this work we study
obstacle problems related to the quasilinear parabolic equations of the form
\begin{equation}\label{parabolic}\partial_t u-\operatorname{div}A(x,t,u,Du^m)=0.\end{equation}
Here, the vector field $A$ is only assumed to be measurable and satisfies
 \begin{equation}\label{A}
	\begin{cases}
	A(x,t,u,\zeta)\cdot\zeta\geq \nu_0|\zeta|^2,\\
	|A(x,t,u,\zeta)|\leq \nu_1|\zeta|,
	\end{cases}
\end{equation}
where $\nu_0$ and $\nu_1$ are fixed positive constants. Throughout the work, we only consider the singular case
$m\in \left(\frac{(n-2)_+}{n+2},1\right)$.
The obstacle problem for the porous medium type equation \eqref{parabolic}-\eqref{A} can be formulated as follows. Given
an obstacle function $\psi:\Omega_T\rightarrow\mathbb{R}_+$
with $D\psi^m\in L^2(\Omega_T)$ and $\partial_t\psi^m\in
L^{\frac{m+1}{m}}(\Omega_T)$,
we define the function classes
\begin{equation*}K_\psi=\left\{v\in C^0([0,T];L^{m+1}(\Omega)):v^m\in L^2(0,T;H
^1(\Omega)),v\geq\psi\ a.e.\ \text{in}\ \Omega_T\right\}\end{equation*}
and $K_\psi^\prime =\left\{v\in K_\psi:\partial_tv^m\in L^{\frac{m+1}{m}}(\Omega_T)\right\}$.
Let $\alpha\in W_0^{1,\infty}([0,T],\mathbb{R}_+)$ be a cut-off function in time and $\eta\in W_0^{1,\infty}(\Omega,\mathbb{R}_+)$ be a
cut-off function in space.
We define
\begin{equation*} 	\langle \!\langle\partial_t u,\alpha\eta (v^m-u^m)\rangle \!\rangle=
\iint_{\Omega_T}\eta\left[\alpha^\prime\left(\frac{1}{m+1}u^{m+1}-uv^m\right)-\alpha u\partial_tv^m\right]\,\mathrm {d}x\mathrm{d}t.\end{equation*}
The definition of
weak solutions to the obstacle problems related to the porous medium equation was first introduced by B\"ogelein, Lukkari and Scheven \cite{BLS}.
Cho and Scheven \cite{CS18} later extended the definition to the general quasilinear structure. In this paper, we
adopt the definition from \cite{CS18}.
\begin{definition}\label{definition}\cite{CS18}
A nonnegative function $u\in K_\psi$ is a local weak solution to the obstacle problem related to the porous medium type equation \eqref{parabolic}-\eqref{A} if
and only if the variational
inequality
\begin{equation}\label{weak identity} 	\langle \!\langle\partial_t u,\alpha\eta (v^m-u^m)\rangle \!\rangle+
\iint_{\Omega_T}\alpha A(x,t,u,Du^m)\cdot D(\eta(v^m-u^m))\,\mathrm {d}x\mathrm{d}t\geq0\end{equation}
holds true for any $v\in K_\psi^\prime$, any cut-off function in time $\alpha\in W_0^{1,\infty}([0,T],\mathbb{R}_+)$ and any cut-off function in space $\eta\in W_0^{1,\infty}(\Omega,\mathbb{R}_+)$.
\end{definition}
In this work, we shall make two regularity assumptions on the obstacle function under consideration.
More precisely, we assume that the obstacle function $\psi$ satisfies the following regularity properties:
\begin{itemize}
\item[(1)] The function $\psi^m$ is locally Lipschitz continuous in $\Omega_T$,
\item[(2)]  The time derivative $\partial_t\psi^{1-m}$ is locally bounded in $\Omega_T$.
\end{itemize}
The first assumption will be needed for the proof of Lemma \ref{sup} in \S 6, and the second assumption will be used to simplify estimating
the weighted spatial averages from \S 5. We emphasize that the second assumption can be improved to an integrability condition, but the proof is too long to give here.

According to \cite{CS18}, the assumption (1) implies that the weak solution $u$ is locally bounded and H\"older continuous in $\Omega_T$.
There is no loss of generality in assuming
\begin{equation}\label{01}0\leq u(x,t)\leq1\end{equation}
for all $(x,t)\in\Omega_T$.
For simplicity of notation, we write $\Psi=\psi^{m+1}+|\partial_t\psi^m|^{\frac{m+1}{m}}+|D\psi^m|^2$.
We are now in a position to state our main theorem.
\begin{theorem}\label{main theorem}
Let $\mathfrak z_0\in\Omega_T$ be a fixed point, and
let $R<1$ be a fixed positive number
such that $Q_{8R,64R^2}(\mathfrak z_0)\subset\Omega_T$. Assume that there exists a constant $M_0>0$ such that
\begin{equation}\begin{split}\label{lipschitz}
\sup_{Q_{8R,64R^2}(\mathfrak z_0)}(\ \Psi^{\frac{1}{m+1}}+|\partial_t\psi^{1-m}|^{\frac{1}{1-m}}\ )\leq M_0.
\end{split}\end{equation}
Let $u$ be a nonnegative weak solution to the obstacle problem in the sense of Definition \ref{definition} that satisfies \eqref{01}.
Then there exists a constant $\epsilon=\epsilon(n,m,\nu_0,\nu_1)>0$ such that
\begin{equation}\begin{split}\label{main result estimate}
\fiint_{Q_{R,R^2}(\mathfrak z_0)}&|Du^m|^{2(1+\epsilon)}\,\mathrm{d}x\mathrm{d}t
\leq
\gamma \left(\ \fiint_{Q_{4R,16R^2}(\mathfrak z_0)}|Du^m|^2\,\mathrm{d}x\mathrm{d}t\right)^{1+\epsilon}
+\gamma \left(
M_0^2+R^{-2}+1\right)^{1+\epsilon},
\end{split}\end{equation}
where the constant $\gamma$ depends only upon $n$, $m$, $\nu_0$ and $\nu_1$.
\end{theorem}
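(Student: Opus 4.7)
The plan is to derive \eqref{main result estimate} via a Gehring-type self-improvement argument built on the intrinsic reverse Hölder inequality established in §6. To ensure scale invariance I would first fix a comparison level
\[
\lambda_0 = \fiint_{Q_{4R,16R^2}(\mathfrak z_0)}|Du^m|^2\,\dd x\dd t + M_0^2 + R^{-2} + 1,
\]
so that the target inequality reduces to showing a level-set estimate of the form $|\{|Du^m|^2 > \lambda\}|\lesssim \lambda^{-(1+\epsilon)}$ integrated against $\lambda$. Working on a chain of concentric cylinders $Q_{R,R^2}(\mathfrak z_0)\Subset Q_\varrho(\mathfrak z_0)\Subset Q_{4R,16R^2}(\mathfrak z_0)$, I would fix $\lambda \geq c\lambda_0$ for a large constant $c$ depending on $n,m,\nu_0,\nu_1$, and work within the superlevel set $E(\lambda) = \{z\in Q_\varrho(\mathfrak z_0) : |Du^m(z)|^2 > \lambda\}$.

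Next I would carry out the stopping-time construction using the \emph{localized} Hardy--Littlewood maximal function rather than the full strong maximal function (as emphasized in the introduction, since the localized version is compatible with the obstacle constraint). For each Lebesgue point $z\in E(\lambda)$ I would consider the sub-intrinsic cylinders $\widetilde Q^{(\lambda)}_r(z)$ from §3 of radius compatible with the singular scaling, and pick the largest radius $r_z$ such that
\[
\fiint_{\widetilde Q^{(\lambda)}_{r_z}(z)} |Du^m|^2\,\dd x\dd t = \lambda,
\]
with $\fiint_{\widetilde Q^{(\lambda)}_{r}(z)} |Du^m|^2\,\dd x\dd t < \lambda$ for every larger admissible $r$. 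The choice $\lambda\geq c\lambda_0$ guarantees that $r_z$ stays strictly below the distance to the boundary of $Q_{4R,16R^2}(\mathfrak z_0)$, and a standard Vitali-type argument adapted to the sub-intrinsic geometry produces a countable disjoint subfamily $\{\widetilde Q^{(\lambda)}_{r_i}(z_i)\}$ whose dilates cover $E(\lambda)$.

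On each stopping-time cylinder I would split into the degenerate and non-degenerate regimes according to the size of $\sup u^m$ versus the intrinsic radius, and in each case invoke the reverse Hölder inequality from §6, which combines the energy estimate of §4, the gluing lemma of §5 and the parabolic Sobolev embedding. This gives, on every $\widetilde Q^{(\lambda)}_{r_i}(z_i)$, an estimate of the form
\[
\fiint_{\widetilde Q^{(\lambda)}_{r_i}(z_i)} |Du^m|^{2q}\,\dd x\dd t \leq \gamma\lambda^q + \gamma M_0^{2q} + \gamma R^{-2q}
\]
for some $q<1$, together with the reverse direction $\fiint |Du^m|^2 \leq \gamma \lambda\cdot\chi\{|Du^m|^2 > \lambda/c\}\text{-avg}$ on an enlargement of the cylinder. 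The pointwise $L^\infty$ bound \eqref{01}, together with assumption (1) on $\psi^m$, is exactly what feeds into the proof of the reverse Hölder inequality at this step.

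Finally I would sum these cylinder estimates, exploiting disjointness of the $\widetilde Q^{(\lambda)}_{r_i}(z_i)$, to obtain the distributional inequality
\[
\int_{E(2c\lambda)} |Du^m|^2\,\dd x\dd t \leq \gamma \int_{E(\lambda/2c)} |Du^m|^2\,\dd x\dd t - \gamma\lambda\,|E(2c\lambda)| + \gamma(M_0^2+R^{-2}+1)\cdot|E(\lambda/2c)|,
\]
multiply by $\lambda^{\epsilon-1}$ and integrate in $\lambda$ over $(c\lambda_0,\infty)$. A Fubini exchange converts these level-set integrals into $\|Du^m\|_{L^{2(1+\epsilon)}}^{2(1+\epsilon)}$ on $Q_{R,R^2}(\mathfrak z_0)$ on the left and $\lambda_0^{1+\epsilon}$ plus a small multiple of the same $L^{2(1+\epsilon)}$ norm on a larger cylinder on the right; choosing $\epsilon=\epsilon(n,m,\nu_0,\nu_1)$ small enough to absorb that small multiple yields \eqref{main result estimate}.

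The main obstacle is the passage from the sub-intrinsic stopping-time cylinders to genuinely intrinsic ones on which the reverse Hölder inequality of §6 is available, while keeping the obstacle-induced error $M_0$ under control through the gluing lemma of §5 and the $L^\infty$ bound from assumption (1). The use of the localized maximal function is essential here, because the obstacle constraint prevents the usual truncation and extension tricks underlying the strong maximal function approach of \cite{GS}.
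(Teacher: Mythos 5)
Your overall strategy is the same as the paper's: a stopping-time construction over the sub-intrinsic cylinders of \S 3 driven by a localized maximal function, a Vitali-type covering, the intrinsic reverse H\"older inequalities of \S 6, and a Fubini/Gehring-type integration in $\lambda$ with absorption on concentric cylinders via the iteration lemma of \cite{BDKS}. The problem is that the step you yourself label ``the main obstacle'' --- passing from the stopping-time cylinders, which are only \emph{sub-intrinsic}, to cylinders on which Propositions \ref{degenerate regime} and \ref{non degenerate regime} are applicable --- is exactly the core of the proof and is not carried out in your proposal. Your stopping time only yields $\fiint_{Q_{s_z}(z)}|Du^m|^2\,\mathrm{d}x\mathrm{d}t\geq\lambda$ on a sub-intrinsic cylinder, whereas both propositions of \S 6 require the cylinder to be intrinsic (and the degenerate/non-degenerate dichotomy is the oscillation condition \eqref{degenerate}--\eqref{non degenerate}, not a comparison of $\sup u^m$ with the intrinsic radius). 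The paper resolves this in Proposition \ref{reverse holder}: if no scale in $(2s_z',L_2s_z')$ is intrinsic, one introduces the first intrinsic scale $\sigma_z\geq L_2 s_z$, uses the decay property of Lemma \ref{subcylinder} (3) to compare $\theta_{2s_z}(z)^{\frac{m+1}{1-m}}/(2s_z)$ with $\theta_{\sigma_z}(z)^{\frac{m+1}{1-m}}/\sigma_z$, bounds $\lambda$ by the former through the Caccioppoli estimate, and then either invokes the global bound of Lemma \ref{subcylinder} (4), (6) when $\sigma_z\gtrsim R^2$, or shows that $Q_{\sigma_z}(z)$ is necessarily degenerate and applies \eqref{claimedtheta/s}; the small factor $(2/L_2)^{\frac{m+1}{1-m}\beta-1}$ is then used to reabsorb the resulting multiple of $\lambda$, which is what fixes $L_2$ and the localization parameter $L$. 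Without this argument the key level-set inequality \eqref{reverse holder final} on the stopping cylinders has no proof, and the subsequent summation and Fubini steps cannot start.

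A second genuine gap is your starting point ``for each Lebesgue point $z\in E(\lambda)$'': you implicitly assume that the averages over the shrinking, solution-dependent cylinders $Q_s(z)$ exceed $\lambda$ for small $s$, i.e.\ a Lebesgue differentiation property for this non-standard family. These cylinders do not form a regular differentiation basis, so this is not automatic; the paper devotes Lemma \ref{maxlemma} to precisely this point, proving a weak $(1,1)$ estimate for the localized maximal operator $T^*$ by means of the Vitali-type covering Lemma \ref{vitali} and deducing $|Du^m|^2\leq T^*(|Du^m|^2)$ a.e., which is also what allows the passage from superlevel sets of $T^*$ back to superlevel sets of $|Du^m|^2$ in the integration step. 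Two further (more minor) omissions: the upper bound $\fiint_{Q_{3\chi s_z}(z)}|Du^m|^2\,\mathrm{d}x\mathrm{d}t\leq c_2\lambda$ requires a separate argument when $3\chi s_z\geq L^{-1}R^2$, since the stopping time controls averages only for $s\leq L^{-1}R^2$; and the final absorption of the small multiple of the $L^{2(1+\epsilon)}$ norm needs an a priori finite quantity, which the paper secures by truncating with $\min\{k,|Du^m|^2\}^{\epsilon}$, applying the iteration lemma in $\sigma_1,\sigma_2$, and only then letting $k\to\infty$.
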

\begin{remark}
Contrary to \cite[Theorem 7.4]{GS}, which established a Calder\'on-Zygmund type estimate for the porous medium equation, we only derive the reverse H\"older inequality for the obstacle problem. Our proof makes no appeal to address the Calder\'on-Zygmund type estimate.
Finally, for the proof of Theorem \ref{main theorem}, we will write $\mathfrak z_0=(0,0)$ for simplicity of presentation.
\end{remark}

\section{Preliminary material}
In this section, we provide
some preliminary lemmas.
All the materials in this section are stated without proof. We first note that the weak solution to the obstacle problem
may not be differentiable in the time variable. In order to handle the problem with the time derivative, we will use the
following time mollification. For a fixed $h>0$, we set
  \begin{equation*}[\![v]\!]_h(x,t)=\frac{1}{h}\int_0^te^{\frac{s-t}{h}}v(x,s)\,\mathrm{d}s,\end{equation*}
where $v\in L^1(\Omega_T)$. Some basic properties of the time mollification are listed in the following lemma (see for instance
\cite[Lemma 3.1]{BLS}).
\begin{lemma}  \label{time mollification}
Let $p\geq1$ and assume that $v\in L^1(\Omega_T)$.
\begin{itemize}
\item[(1)] For a fixed $h>0$, there holds
$\partial_t[\![v]\!]_h=h^{-1}(v-[\![v]\!]_h).$
\item[(2)] If $v\in L^p(0,T;W^{1,p}(\Omega))$, then $[\![v]\!]_h\to v$ and $D[\![v]\!]_h\to Dv$ in $L^p(\Omega_T)$ as $h\downarrow0$.
\item[(3)] If $v\in C(\Omega_T)$, then $[\![v]\!]_h\to v$ uniformly in $\Omega_T$ as $h\downarrow0$.
\end{itemize}
\end{lemma}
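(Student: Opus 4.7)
The plan is to verify the three assertions of the lemma separately, exploiting the fact that $[\![\cdot]\!]_h$ is a convolution in the time variable against the one-sided exponential kernel $K_h(\tau)=h^{-1}e^{-\tau/h}\chi_{\tau>0}$ (after extending $v$ by zero outside $(0,T)$), which is a unit-mass approximate identity. For (1) the computation is direct: I would write
\[
[\![v]\!]_h(x,t)=h^{-1}e^{-t/h}\int_0^t e^{s/h}v(x,s)\,\mathrm{d}s,
\]
and differentiate in $t$ by the product rule together with the fundamental theorem of calculus. The derivative of the exponential prefactor contributes $-h^{-1}[\![v]\!]_h(x,t)$, while differentiating the upper endpoint of the integral yields $h^{-1}e^{-t/h}e^{t/h}v(x,t)=h^{-1}v(x,t)$; summing these gives $\partial_t[\![v]\!]_h=h^{-1}(v-[\![v]\!]_h)$ for a.e.\ $(x,t)$.

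For (2), I would first establish the Young-type bound $\|[\![v]\!]_h\|_{L^p(\Omega_T)}\leq\|v\|_{L^p(\Omega_T)}$, which follows from $\|K_h\|_{L^1(\mathbb{R})}=1$ together with Minkowski's integral inequality applied slicewise in the temporal variable and then integrated in $x$. Next, I would use density of smooth compactly supported functions in $L^p(\Omega_T)$ and a standard three-$\epsilon$ argument to reduce the convergence claim to $v\in C_c^\infty(\Omega_T)$. For such $v$, the substitution $\tau=(t-s)/h$ rewrites the mollification as
\[
[\![v]\!]_h(x,t)=\int_0^{t/h}e^{-\tau}v(x,t-h\tau)\,\mathrm{d}\tau,
\]
and dominated convergence against the integrable envelope $e^{-\tau}\|v\|_{L^\infty}$ yields both pointwise convergence $[\![v]\!]_h\to v$ and, combined with the uniform $L^p$ bound, convergence in $L^p$. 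The gradient statement follows from the fact that $D_x$ commutes with the time mollification, so $D[\![v]\!]_h=[\![Dv]\!]_h$, and the already established convergence applied to $Dv\in L^p(\Omega_T)$ finishes the argument.

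For (3), on a fixed compact subset $K\subset\Omega_T$ on which $v$ is uniformly continuous with modulus $\omega$, I would use the decomposition
\[
[\![v]\!]_h(x,t)-v(x,t)=\int_0^{t/h}e^{-\tau}\bigl[v(x,t-h\tau)-v(x,t)\bigr]\,\mathrm{d}\tau-e^{-t/h}v(x,t),
\]
split the integral at $\tau_0=h^{-1/2}$, and bound the contribution from $\tau\leq\tau_0$ by $\omega(h^{1/2})$ and the tail by $2\|v\|_{L^\infty(K')}e^{-h^{-1/2}}$ for a slightly enlarged compact $K'\supset K$. The boundary term $e^{-t/h}v(x,t)$ vanishes uniformly on $K$ because $t$ is bounded away from zero there. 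The main obstacle here is interpretive rather than technical: the statement \emph{uniformly in $\Omega_T$} is literally false near the initial slice $\{t=0\}$ when $v(\cdot,0)\not\equiv 0$, so the correct reading (and the one consistent with the source \cite{BLS}) is uniform convergence on compact subsets of $\Omega_T$, which is exactly what the above argument delivers.
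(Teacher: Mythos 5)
The paper itself offers no argument to compare with: Lemma \ref{time mollification} is quoted from \cite[Lemma 3.1]{BLS} and explicitly stated without proof, so your proposal can only be judged on its own terms. Parts (1) and (2) are correct and standard: the product-rule/fundamental-theorem computation gives the identity for a.e.\ $(x,t)$, the sub-unit mass of the kernel gives the contraction $\|[\![v]\!]_h\|_{L^p}\leq\|v\|_{L^p}$, and density plus the commutation $D[\![v]\!]_h=[\![Dv]\!]_h$ finish (2).

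In part (3), however, there is a genuine gap in the tail estimate. For $\tau\in(h^{-1/2},t/h]$ the integrand involves $v(x,t-h\tau)$ at times running all the way down to $0$, i.e.\ at points that eventually leave \emph{every} compact $K'\subset\Omega_T$; since $v$ is only assumed continuous on the open cylinder (plus $v\in L^1$), it may be unbounded near the initial slice, and the bound $2\|v\|_{L^\infty(K')}e^{-h^{-1/2}}$ is simply not available. Falling back on integrability only yields the bound $h^{-1}e^{-h^{-1/2}}\int_0^T|v(x,s)|\,\mathrm{d}s$, which controls the tail for a.e.\ fixed $x$ but not uniformly on $K$: taking bumps of height $k$ concentrated near times $1/k$ at points $x_k\to x_\infty$, with spatial supports of measure $4^{-k}/k$, produces $v\in C(\Omega_T)\cap L^1(\Omega_T)$ for which $[\![v]\!]_h$ is unbounded on a compact subset for every fixed $h$, so under the stated hypotheses even your weakened conclusion (locally uniform convergence) fails. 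The repair is to add a boundedness hypothesis, e.g.\ $v\in C(\Omega_T)\cap L^\infty(\Omega_T)$ -- which holds for every function to which the paper applies the lemma, since $0\leq\psi\leq u\leq1$ -- and to bound the tail by $2\|v\|_{L^\infty(\Omega_T)}e^{-h^{-1/2}}$; with that change your argument for interior (locally) uniform convergence is correct. Your interpretive remark is also well taken: with the initial term omitted from the mollifier, uniform convergence on all of $\Omega_T$ fails whenever $v$ does not vanish as $t\downarrow0$ (your computation at $t\sim h$ shows the defect $e^{-1}v(x,0^+)$), and global uniform convergence is recovered only for the variant of the mollification that includes the term $e^{-t/h}v(x,0)$ with $v\in C(\overline{\Omega_T})$, as in \cite{BLS}; note that the paper later uses $\|\psi^m-[\![\psi^m]\!]_h\|_{L^\infty(\Omega_T)}\to0$, which needs this stronger version rather than the literal statement of (3).
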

We remark that Lemma \ref{time mollification} (3) applies to the weak solution $u$, since the weak solution to the obstacle problem is locally H\"older continuous. Next, we recall the inequalities which was obtained from \cite[Proposition 2.1]{GS}.
\begin{lemma}\label{auxiliary}\cite[Proposition 2.1]{GS} Suppose that $u$, $c\geq0$ and $0<m<1$, we have
\begin{equation}\label{auxiliary1}\frac{1}{2}(u-c)(u^m-c^m)\leq \int_c^u(y^m-c^m)\,\mathrm{d}y\leq (u-c)(u^m-c^m)\qquad\text{if}\quad u\geq c,\end{equation}
\begin{equation}\label{auxiliary2}\frac{m}{2}(c-u)(c^m-u^m)\leq \int_u^c(c^m-y^m)\,\mathrm{d}y\leq (c-u)(c^m-u^m)\qquad\text{if}\quad u< c.\end{equation}
\end{lemma}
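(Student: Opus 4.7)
The four claims decompose into two elementary upper bounds and two lower bounds of different character. The two upper bounds are immediate: on $[c,u]$ the integrand $y^m-c^m$ in \eqref{auxiliary1} is increasing and attains its maximum $u^m-c^m$ at the right endpoint, while on $[u,c]$ the integrand $c^m-y^m$ in \eqref{auxiliary2} is decreasing with maximum $c^m-u^m$ at the left endpoint, so bounding the integrands by those maxima and multiplying by the length of the interval delivers both right-hand inequalities.

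For the lower bound in \eqref{auxiliary1} I would use the concavity of $y\mapsto y^m$ for $0<m<1$. On $[c,u]$ the function $y^m-c^m$ is concave, vanishes at $y=c$, and equals $u^m-c^m$ at $y=u$; the chord therefore lies below the graph, giving the linear minorant $y^m-c^m\ge\frac{y-c}{u-c}(u^m-c^m)$, and integrating it over $[c,u]$ produces $\tfrac12(u-c)(u^m-c^m)$.

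The lower bound in \eqref{auxiliary2} is the only genuinely nontrivial step, since on $[u,c]$ the integrand $c^m-y^m$ is \emph{convex} (because $-y^m$ is convex when $y^m$ is concave), so the chord now lies above the graph and cannot serve as a minorant. My plan is to use the tangent to $y^m$ at the right endpoint $y=c$ instead: concavity gives $y^m\le c^m+mc^{m-1}(y-c)$ and hence $c^m-y^m\ge mc^{m-1}(c-y)$ throughout $[u,c]$. Integrating this bound yields
\begin{equation*}
\int_u^c(c^m-y^m)\diff y\ge \frac{mc^{m-1}(c-u)^2}{2}=\frac{m(c-u)}{2}\cdot c^{m-1}(c-u),
\end{equation*}
and to conclude I would invoke the auxiliary pointwise comparison $c^{m-1}(c-u)\ge c^m-u^m$, valid for $0\le u\le c$ and $0<m<1$. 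This comparison is equivalent to $u^m\ge c^{m-1}u$, which for $u>0$ reduces to $u^{m-1}\ge c^{m-1}$ and holds because $m-1<0$ and $u\le c$; the case $u=0$ is trivial.

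The only real obstacle is this last step in \eqref{auxiliary2}; everything else is either monotonicity of the integrand or the standard chord-below-graph bound for concave functions. The factor $m/2$, as opposed to the $1/2$ appearing in \eqref{auxiliary1}, is a genuine loss reflecting the convexity of $c^m-y^m$ on $[u,c]$, and the tangent-at-$c$ argument combined with the elementary inequality $c^{m-1}(c-u)\ge c^m-u^m$ is exactly what recovers it.
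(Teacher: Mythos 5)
Your argument is correct in all four parts: the two upper bounds by monotonicity of the integrands, the lower bound in \eqref{auxiliary1} by the chord-below-graph property of the concave function $y\mapsto y^m-c^m$ on $[c,u]$, and the lower bound in \eqref{auxiliary2} via the tangent at $y=c$ (note $c>0$ automatically, since $0\le u<c$), namely $c^m-y^m\ge mc^{m-1}(c-y)$, followed by the elementary comparison $c^{m-1}(c-u)\ge c^m-u^m$, which indeed reduces to $u^{m-1}\ge c^{m-1}$ and holds because $m-1<0$. Be aware that the paper itself offers no proof to compare against: Section 3 states all preliminary material without proof, and this lemma is quoted verbatim from \cite[Proposition 2.1]{GS}; your concavity/tangent-line derivation is a clean, self-contained substitute for that citation (the source's own argument proceeds by comparable elementary estimates on the explicit antiderivative $\frac{u^{m+1}-c^{m+1}}{m+1}-c^m(u-c)$, so nothing essential is gained or lost either way).
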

We note that Lemma \ref{auxiliary} will be used to derive the energy estimates
in \S 4. This lemma also plays a crucial role in the proof of Lemma \ref{gluing lemma} in \S 5.
Furthermore, we recall the definitions of intrinsic and sub-intrinsic cylinders which was introduced from \cite[section 3]{GS}.
\begin{definition}\cite{GS} Let $z_0\in \Omega_T$ be a fixed point, and let $r$, $\theta>0$ such that $Q_{r,\theta r^2}(z_0)
\subset \Omega_T$. We say that $Q_{r,\theta r^2}(z_0)$ is a sub-intrinsic cylinder if and only if the following inequality
holds:
$$\fiint_{Q_{r,\theta r^2}(z_0)}u^{m+1}\,\mathrm{d}x\mathrm{d}t\leq K_1\theta^{\frac{m+1}{1-m}},$$
where the constant $K_1\geq 1$. Moreover, we say that $Q_{r,\theta r^2}(z_0)$ is an intrinsic cylinder if and only if
$$K_2^{-1}\theta^{\frac{m+1}{1-m}}\leq  \fiint_{Q_{r,\theta r^2}(z_0)}u^{m+1}\,\mathrm{d}x\mathrm{d}t\leq K_2\theta^{\frac{m+1}{1-m}}$$
holds for some constant $K_2\geq 1$.
\end{definition}
At this point, we follow the idea in \cite{GS} to construct the sub-intrinsic cylinders which will be used in the covering argument in \S 7.
Let $z_0=(x_0,t_0)\in \Omega_T$ be a point such that $Q_{R,R^2}(z_0)
\subset \Omega_T$. For any $s\in (0,R^2]$, we denote by $\tilde r(s)$ the quantity
\begin{equation}\label{tilde r}\tilde r(s)=\sup\left\{r<R:\ \left(\int_{t_0-s}^{t_0+s}
\int_{B_r(x_0)}u^{m+1}\,\mathrm{d}x\mathrm{d}t\right)^{1-m}r^{2(m+1)}|B_r|^{m-1}\leq s^2\right\}.\end{equation}
Let $b_0=(n+2)(m+1)-2n$ and let $\hat b\in (0,\min\{b_0,\frac{1}{2}\})$.
According to the proof of \cite[Lemma 3.1]{GS}, the function $\tilde r(s)$ is
continuous and this enables us to introduce the radius
\begin{equation}\label{r}r(s)=r(s,z_0)=\min_{s\leq t\leq R^2} \left(\frac{s}{t}\right)^{\hat{b}}\tilde r(t)\end{equation}
for any $s\in(0,R^2]$.
Subsequently, we write $ Q_s(z_0)=Q_{r(s),s}(z_0)$ and denote by $\theta_s(z_0)$ the quantity
$$\theta_s(z_0)=\frac{s}{r(s)^2}.$$
If $z_0=(0,0)$,  then we abbreviate $Q_s:= Q_s((0,0))$ and $\theta_s:=\theta_s((0,0))$. We now summarize the results obtained from
\cite{GS} for this kind of cylinder as follows.
\begin{lemma}\label{subcylinder}\cite{GS} Fix a point $z_0\in \Omega_T$ and assume that $Q_{R,R^2}(z_0)
\subset \Omega_T$. Let $s\in (0,R^2]$ and $r(s)$ be
the radius constructed via \eqref{tilde r}-\eqref{r}. Then, the
cylinder $Q_s(z_0)$ is sub-intrinsic and satisfies the following property:
\begin{itemize}
  \item [(1)] $\fiint_{Q_s(z_0)}u^{m+1}\,\mathrm{d}x\mathrm{d}t\leq \theta_s(z_0)^{\frac{m+1}{1-m}}$.
\end{itemize}
For $s$, $\sigma\in (0,R^2]$ and $s<\sigma$, we have the properties for the concentric cylinders
$Q_s(z_0)$ and $Q_\sigma(z_0)$ as follows:
\begin{itemize}
  \item [(2)] $r(s)\leq \left(\frac{s}{\sigma}\right)^{\hat {b}}r(\sigma)$ and $r(s)\to 0$ as $s\downarrow 0$.
  \item [(3)] If \ $\fiint_{Q_\tau(z_0)}u^{m+1}\,\mathrm{d}x\mathrm{d}t< \theta_\tau(z_0)^{\frac{m+1}{1-m}}$ holds for any $\tau\in (s,\sigma)$,
  then
  $$\theta_\tau(z_0)\leq \left(\frac{\tau}{\sigma}\right)^\beta\theta_\sigma(z_0),$$
  where $\beta=1-2\hat b>0$.
  \item [(4)] If $\sigma=ks$ for some $k\geq 1$, then there holds
  \begin{equation*}\begin{split}r(s)&\leq k^{-\hat b}r(ks)\leq k^{\hat a-\hat b}r(s),
  \\ \theta_{ks}(z_0)&\leq k^{\beta}\theta_{s}(z_0)\leq  k^{\beta+2\hat a}\theta_{ks}(z_0),
  \end{split}\end{equation*}
  where $\hat a=\hat b+\frac{2}{2(m+1)-(1-m)n}$.
  \item [(5)] If $\sigma=ks$ for some $k\geq 1$ and $Q_s(z_0)$ is intrinsic, then also the cylinder $Q_\sigma(z_0)$ is intrinsic.
\end{itemize}
Let $\mathfrak z_0\in\Omega_T$ be a point such that $Q_{8R,64R^2}(\mathfrak z_0)\subset\Omega_T$ and assume that for some $K\geq 1$,
$$\fiint_{Q_{8R,64R^2}(\mathfrak z_0)}u^{m+1}\,\mathrm{d}x\mathrm{d}t\leq K^{\frac{m+1}{1-m}}.$$
Then for any $z\in Q_{4R,16R^2}(\mathfrak z_0)$, we have the following global estimate
\begin{itemize}
  \item [(6)] $1\leq \theta_{R^2}(z)\leq cK^{2(m+1)\bar a},$
  where $\bar a=\frac{1}{2(m+1)+n(m-1)}$ and $c=c(n,m)$.
\end{itemize}
Furthermore, if $y,z\in Q_{4R,16R^2}(\mathfrak z_0)$ and $Q_{r(s,z),s}(z)\cap Q_{r(s,y),s}(y)\neq \emptyset$, then there exists a constant
$\hat c=\hat c(n,m,K)>1$ such that for any $0<s\leq \frac{R^2}{2\hat c}$ there holds
\begin{itemize}
  \item [(7)] $Q_{r(s,z),s}(z)\subset Q_{r(\hat cs,y),\hat cs}(y)$ and $Q_{r(s,y),s}(y)\subset Q_{r(\hat cs,z),\hat cs}(z)$.
\end{itemize}
\end{lemma}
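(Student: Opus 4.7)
The cylinders $Q_s(z_0)$ are engineered so that their properties can be read off from the definition of the envelope $r(s)=\min_{s\leq t\leq R^2}(s/t)^{\hat b}\tilde r(t)$ combined with the monotonicity of the integral constraint defining $\tilde r$. The plan is to verify the seven items in order, reducing all but one to direct manipulation of these definitions, and then to derive (6)--(7) as corollaries of the earlier items.

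The key preliminary observation is that the defining inequality of $\tilde r(s)$, after dividing by $(2s|B_r|)^{1-m}$, is algebraically equivalent to the sub-intrinsic bound $\fiint_{Q_{r,s}(z_0)}u^{m+1}\leq c(s/r^2)^{(m+1)/(1-m)}$, with a universal constant that is absorbed into $K_1$. Moreover, the map $r\mapsto (\iint_{B_r\times\Lambda_s}u^{m+1})^{1-m}r^{2(m+1)}|B_r|^{m-1}$ is strictly increasing in $r$, since the algebraic exponent equals $b_0=2(m+1)+n(m-1)$, which is positive precisely when $m>(n-2)_+/(n+2)$. Hence the admissible set in the supremum is an interval $(0,\tilde r(s)]$, and because $r(s)\leq\tilde r(s)$ (take $t=s$ in the minimum) the constraint persists at radius $r(s)$, giving Property (1). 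Property (2) is then immediate from restricting the minimum to $t\geq\sigma$, which exhibits $(s/\sigma)^{\hat b}r(\sigma)$ as an upper competitor; the convergence $r(s)\to 0$ follows from $\tilde r(R^2)\leq R$.

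Property (3) is the technical heart. Setting $f(t)=t^{-\hat b}\tilde r(t)$ and $g(\tau)=\min_{\tau\leq t\leq R^2}f(t)$, so that $r(\tau)=\tau^{\hat b}g(\tau)$, the strict sub-intrinsic inequality at $\tau$ translates via the equivalence behind (1) into $\tilde r(\tau)>r(\tau)$, which means the minimum in $g(\tau)$ is not attained at $t=\tau$. A continuity-and-bookkeeping argument then shows $g$ is locally constant on $(s,\sigma)$, so $r(\tau)/r(\sigma)=(\tau/\sigma)^{\hat b}$ and the claim follows with $\beta=1-2\hat b$. The scaling estimates (4) then combine (2) with the sub-intrinsic bound of (1) evaluated at both scales $s$ and $ks$; the exponent $\hat a$ arises from balancing powers of $s$ against the growth $b_0$ in the constraint. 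The intrinsicity propagation (5) follows by passing the lower bound on $\fiint u^{m+1}$ from $Q_s$ to $Q_{ks}$ through the scaling of $\theta_s$ furnished by (4) and the monotonicity of the integral in the radius.

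Properties (6) and (7) are now technical corollaries. For (6), one tests the defining inequality for $\tilde r(R^2,z)$ at $r=R$ and at the inner point $z$, combining the hypothesis on $\fiint u^{m+1}$ over $Q_{8R,64R^2}(\mathfrak z_0)$ with $u\leq 1$ to produce the two-sided bound on $\theta_{R^2}(z)$, with $\bar a$ extracted from the constraint's scaling exponent. For (7), the radius comparison in (4) shows that enlarging $s$ by a factor $\hat c$---chosen large in terms of $n$, $m$, $K$ via (6)---swallows both candidate cylinders spatially, and the factor automatically absorbs them in time. The main obstacle is Property (3): converting the pointwise strict sub-intrinsic inequality into the structural statement that $g$ is locally constant on $(s,\sigma)$, which requires careful tracking of where the minimum is attained as $\tau$ varies over the interval. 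Once this fact is in hand, the remaining items reduce to monotonicity of $\iint u^{m+1}$ in $r$ and elementary algebra with the minimum defining $r(s)$.
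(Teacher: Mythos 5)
The paper itself states this lemma without proof (Section 3 explicitly defers all of it to \cite{GS}), so your outline can only be measured against the intended Gianazza--Schwarzacher arguments. For items (1), (2), (4) and (5) your route is the standard one and is sound in outline: monotonicity of $r\mapsto\big(\iint u^{m+1}\big)^{1-m}r^{2(m+1)}|B_r|^{m-1}$ through the exponent $b_0=(n+2)(m+1)-2n>0$, the competitor $t\geq\sigma$ in the minimum \eqref{r}, and, for (4), the growth $r^{b_0}$ in the constraint \eqref{tilde r} together with its saturation at the smaller scale, which is indeed where $\hat a=\hat b+2/b_0$ comes from. Two points, however, are genuine gaps. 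First, in (3) you pass from the strict inequality $\fiint_{Q_\tau}u^{m+1}<\theta_\tau^{(m+1)/(1-m)}$ to $\tilde r(\tau)>r(\tau)$ ``via the equivalence behind (1)''; but you yourself only established that equivalence up to a universal constant absorbed into $K_1$, and an up-to-constant equivalence cannot detect the equality case. The correct implication runs the other way: if the minimum in \eqref{r} is attained at $t=\tau$ (note the capped case $\tilde r(\tau)=R$ cannot be the minimizer for $\tau<R^2$, since then $f(t)=t^{-\hat b}\tilde r(t)\leq t^{-\hat b}R<f(\tau)$ for $t>\tau$), then the constraint in \eqref{tilde r} is saturated at $\tilde r(\tau)=r(\tau)$, and this saturation must reproduce \emph{exactly} the constant appearing in (1) and (3); with a mismatched normalization the dichotomy you rely on breaks down, so this constant bookkeeping has to be carried out, not waved away.

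Second, and more seriously, (7) is not a corollary of (4) and (6). Property (4) compares radii at the \emph{same} center across scales; it says nothing about $r(s,z)$ versus $r(s,y)$ for distinct centers, and the crude chain through (2), (4), (6) only yields $r(s,z)/r(s,y)\lesssim K^{(m+1)\bar a}(R^2/s)^{\hat a-\hat b}$, which blows up as $s\downarrow0$ because $\hat a>\hat b$. So ``enlarging $s$ by a factor $\hat c$'' does not automatically swallow the neighboring cylinder spatially: if $r(s,z)\gg r(s,y)$ you have no control. The actual proof must transfer admissibility between the two centers: for each $t\geq\hat c s$ one verifies the defining constraint \eqref{tilde r} at center $y$ for a radius comparable to $r(s,z)$ by including the corresponding ball in a sub-intrinsic cylinder centered at $z$ at a slightly larger time scale, using the intersection to control the offset of the centers and (1) at center $z$ to bound the integral; the scales near $R^2$ are handled with (6), and this is where $\hat c=\hat c(n,m,K)$ really originates. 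This comparison-of-centers lemma is the missing ingredient in your sketch. A minor further inaccuracy: the bound $u\leq1$ is not a hypothesis of (6) and is not needed there; the lower bound $\theta_{R^2}(z)\geq1$ follows from $r(R^2,z)\leq R$, and the upper bound from testing \eqref{tilde r} with the $K$-hypothesis alone.
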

In the applications, we can use the assumption \eqref{01} to deduce that
$$\fiint_{Q_{8R,64R^2}}u^{m+1}\,\mathrm{d}x\mathrm{d}t\leq 1.$$
This enables us to take $K=1$ when we apply Lemma \ref{subcylinder} (6) and (7).
As indicated in \cite{GS}, the properties (4) and (7) imply  the following Vitali-type covering property.
\begin{lemma}\label{vitali}\cite{GS} Let $V\subset Q_{4R,16R^2}(\mathfrak z_0)$ and let $Q_{r(s,z),s}(z)$ be the sub-intrinsic
cylinder as in Lemma \ref{subcylinder}. Let $\mathcal{F}=\{Q_{r(s,z),s}(z):\ z\in V\}$ be a covering of $V$. Then there exists
a countable family $\mathcal{G}=\{Q_{r(s_i,z_i),s_i}(z_i)\}_{i=1}^\infty$ of disjoint cylinders in $\mathcal{F}$ and a constant
$\chi=\chi(n,m)>1$ such that
\begin{equation*}\begin{split}V\subset \bigcup_{i=1}^\infty Q_{r(\chi s_i,z_i),\chi s_i}(z_i). \end{split}\end{equation*}
\end{lemma}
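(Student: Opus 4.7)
The plan is to prove Lemma \ref{vitali} by a standard dyadic Vitali-type selection argument adapted to the sub-intrinsic cylinders $Q_{r(s,z),s}(z)$. The key structural inputs from Lemma \ref{subcylinder} are the monotonicity of $r(s,z)$ in $s$ coming from property~(2), the comparability of radii at dyadically related scales from property~(4), and, most importantly, the engulfing property~(7).

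First I would stratify $\mathcal{F}$ by time scale. Setting $s_\star = \sup\{s : Q_{r(s,z),s}(z) \in \mathcal{F}\}$, for each $j \geq 0$ I define the generation
$$\mathcal{F}_j = \bigl\{ Q_{r(s,z),s}(z) \in \mathcal{F} : 2^{-j-1}s_\star < s \leq 2^{-j}s_\star \bigr\}.$$
I would then extract $\mathcal{G}$ greedily: let $\mathcal{G}_0$ be any maximal pairwise disjoint subfamily of $\mathcal{F}_0$, and inductively let $\mathcal{G}_j$ be a maximal pairwise disjoint subfamily of those cylinders in $\mathcal{F}_j$ that meet no cylinder already chosen in $\mathcal{G}_0 \cup \cdots \cup \mathcal{G}_{j-1}$. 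Then $\mathcal{G} = \bigcup_j \mathcal{G}_j$ is pairwise disjoint by construction, and countable because its members are open sets of positive Lebesgue measure contained in the bounded region $\Omega_T$.

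The main step is to show that each $Q = Q_{r(s,z),s}(z) \in \mathcal{F}$, lying in some generation $\mathcal{F}_j$, sits inside the $\chi$-enlargement of some selected cylinder. By maximality, $Q$ must intersect some $Q' = Q_{r(s',z'),s'}(z') \in \mathcal{G}_k$ with $k \leq j$, and the dyadic definition of the generations forces $s \leq 2s'$. Using the monotonicity from Lemma \ref{subcylinder}~(2), both $Q \subset Q_{r(2s',z),2s'}(z)$ and $Q' \subset Q_{r(2s',z'),2s'}(z')$, so these two enlargements, now sharing the same time-radius $2s'$, still intersect. Lemma \ref{subcylinder}~(7) then yields
$$Q \subset Q_{r(2s',z),2s'}(z) \subset Q_{r(2\hat c s',z'),2\hat c s'}(z'),$$
and I may take $\chi = 2\hat c$. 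Since under the normalization \eqref{01} Lemma \ref{subcylinder} is applied with $K=1$, the resulting $\chi$ depends only on $n$ and $m$.

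The main obstacle will be ensuring that the hypothesis $s \leq R^2/(2\hat c)$ in Lemma \ref{subcylinder}~(7) is met for the doubled scale: after doubling I need $2s' \leq R^2/(2\hat c)$, i.e.\ $s' \leq R^2/(4\hat c)$. This forces the Vitali argument to be applied to families of sufficiently small cylinders, which is precisely the regime produced by the stopping-time covering in \S 7; otherwise the finitely many large-scale cylinders have to be treated by a separate direct enclosure, or the statement must be tacitly restricted to small enough $s$. Handling this scale restriction cleanly is the genuinely delicate point; by contrast the dyadic stratification and the greedy selection are entirely routine.
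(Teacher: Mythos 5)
Your argument is correct and is essentially the proof the paper has in mind: the lemma is stated without proof (citing \cite{GS}, with the remark that properties (4) and (7) of Lemma \ref{subcylinder} imply it), and your dyadic stratification, greedy maximal selection, monotonicity of $s\mapsto r(s,z)$, and application of the engulfing property (7) at the common scale $2s'$, giving $\chi=2\hat c$ with $K=1$ so that $\chi=\chi(n,m)$, is exactly that standard route. The scale restriction you flag is a real but minor imprecision of the statement rather than a gap in your argument: since $r(\cdot,z)$ is only defined for $s\le R^2$, the conclusion $V\subset\bigcup_i Q_{r(\chi s_i,z_i),\chi s_i}(z_i)$ already presupposes $\chi s_i\le R^2$, so the lemma is implicitly a small-scale statement, and in the only places it is invoked (Lemma \ref{maxlemma} and the level-set covering in \S 7) the cylinders come from the stopping-time construction with $s$ of order $L^{-1}R^2$ and $L>10\chi$, which is precisely the regime where property (7) applies at the doubled scale, up to harmlessly enlarging the numerical factor in $L$ or in $\chi$; no separate treatment of large cylinders is needed there.
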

The Vitali-type covering Lemma will be used only in \S 7. This kind of covering plays a
crucial role in the proof of weak type estimate for the maximal function in Lemma \ref{maxlemma}. On the other hand, the Vitali-type covering
will be used to estimate the measure of superlevel sets of the gradient.
\section{Caccioppoli type inequalities}
The aim of this section is to establish energy estimates for the weak solution of the obstacle problem. Here, we state and prove the
energy estimates on the condition that the function $\Psi$ is locally integrable in $\Omega_T$. This condition is weaker than
the Lipschitz condition \eqref{lipschitz}. Our main result in this section states as follows.
\begin{lemma}\label{Caccioppoli}Let $0<m<1$ and let $u$ be a nonnegative weak solution to the obstacle problem in the sense of Definition \ref{definition}.
Let $z_0=(x_0,t_0)\in \Omega_T$ be a point such that
$Q_{r_1,s_1}(z_0)\subset Q_{r_2,s_2}(z_0)\subset \Omega_T$. Assume that $\phi\in C_0^\infty(B_{r_2}(x_0))$ and $0\leq \phi\leq1$.
Then there exists a constant $\gamma=\gamma(m,\nu_0,\nu_1)>0$ such that for any $c\geq0$ there holds
\begin{equation}\begin{split}\label{Cac1}\esssup_{t\in \Lambda_{s_1}(t_0)}&\int_{B_{r_2}(x_0)}\phi^2(u-c)_+(u^m-c^m)_+\,
\mathrm{d}x+\int_{\Lambda_{s_1}(t_0)}\int_{B_{r_2}(x_0)}|D[(u^m-c^m)_+\phi]|^2\,\mathrm{d}x\mathrm{d}t
\\&\leq \frac{\gamma}{s_2-s_1}\iint_{Q_{r_2,s_2}(z_0)}(u-c)_+(u^m-c^m)_+\,
\mathrm{d}x\mathrm{d}t\\&\quad+\gamma \iint_{Q_{r_2,s_2}(z_0)}(u^m-c^m)_+^2|\,D\phi\,|^2\,\mathrm{d}x\mathrm{d}t
\\&\qquad+\gamma\iint_{Q_{r_2,s_2}(z_0)}\left(\psi^{m+1}+|\partial_t\psi^m|^{\frac{m+1}{m}}+|D\psi^m|^2\right)\chi_{\{u>c\}}\,
\mathrm{d}x\mathrm{d}t.\end{split}\end{equation}
Moreover, for any $c\geq0$ we have
\begin{equation}\begin{split}\label{Cac2}\esssup_{t\in \Lambda_{s_1}(t_0)}&\int_{B_{r_1}(x_0)}|u-c|\,|u^m-c^m|\,
\mathrm{d}x+\iint_{Q_{r_1,s_1}(z_0)}|Du^m|^2\,\mathrm{d}x\mathrm{d}t
\\&\leq \frac{\gamma}{s_2-s_1}\iint_{Q_{r_2,s_2}(z_0)}|u-c|\,|u^m-c^m|\,
\mathrm{d}x\mathrm{d}t\\&\quad+\frac{\gamma}{(r_2-r_1)^2}\iint_{Q_{r_2,s_2}(z_0)}|u^m-c^m|^2\,\mathrm{d}x\mathrm{d}t
\\&\qquad+\gamma\iint_{Q_{r_2,s_2}(z_0)}\left(\psi^{m+1}+|\partial_t\psi^m|^{\frac{m+1}{m}}+|D\psi^m|^2\right)\,
\mathrm{d}x\mathrm{d}t,\end{split}\end{equation}
where the constant $\gamma$ depends only upon $\nu_0$, $\nu_1$ and $m$.
\end{lemma}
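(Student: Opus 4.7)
The proof follows the standard testing scheme for parabolic obstacle problems of porous medium type (cf.\ \cite{BLS,CS18}), adapted to keep track of the obstacle corrections. The starting point is to regularize the variational inequality \eqref{weak identity} in the time variable via the mollification $[\![\cdot]\!]_h$ of Lemma~\ref{time mollification}, so that the formal pairing $\langle\!\langle\partial_t u,\cdot\rangle\!\rangle$ becomes a genuine integral of $\partial_t[\![u]\!]_h$ against the test function, and to pass to the limit $h\downarrow 0$ only at the end using parts (2)--(3) of Lemma~\ref{time mollification}.

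For \eqref{Cac1}, I would take as comparison function
\[
v:=\max\{u^{\sharp},\psi\},\qquad (u^{\sharp})^m:=u^m-\phi^2(u^m-c^m)_+,
\]
which (after mollification) lies in $K_\psi^\prime$ and satisfies $v\geq\psi$ automatically, together with a time cut-off $\alpha\in W_0^{1,\infty}(\Lambda_{s_2}(t_0),\mathbb{R}_+)$ chosen with $\alpha\equiv 1$ on $\Lambda_{s_1}(t_0)$ and $|\alpha'|\leq C(s_2-s_1)^{-1}$, and a space cut-off $\eta\equiv 1$ on $B_{r_2}(x_0)$. Plugging $v$ into \eqref{weak identity}, the elliptic term, via the bounds in \eqref{A} and Young's inequality, produces the gradient term $\iint|D((u^m-c^m)_+\phi)|^2$ on the left together with $\iint(u^m-c^m)_+^2|D\phi|^2$ on the right; on the exceptional set $\{u^{\sharp}<\psi\}$, where $v^m=\psi^m$, the same computation generates the contribution $|D\psi^m|^2\,\chi_{\{u>c\}}$ that accounts for the $|D\psi^m|^2$-term in the last line of \eqref{Cac1}.

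The parabolic pairing is where Lemma~\ref{auxiliary} enters. On the region $\{u^{\sharp}\geq\psi\}$ the expansion of $\langle\!\langle\partial_t u,\alpha\eta(v^m-u^m)\rangle\!\rangle$, after letting $\alpha$ approach the indicator of $\Lambda_{s_1}(t_0)$, reduces to integrating $\phi^2\int_c^u(y^m-c^m)_+\,\mathrm{d}y$, which by \eqref{auxiliary1} is comparable to $\phi^2(u-c)_+(u^m-c^m)_+$; this produces the $\esssup_t$ term on the left of \eqref{Cac1} and the $(s_2-s_1)^{-1}$-weighted term on the right coming from $|\alpha'|$. On the exceptional region $\{u^{\sharp}<\psi\}$ one picks up quantities of the form $u\,\partial_t\psi^m\,\chi_{\{u>c\}}$ and $\psi^{m+1}\,\chi_{\{u>c\}}$, which are absorbed into the right-hand side via Young's inequality with conjugate exponents $m+1$ and $(m+1)/m$, yielding the remaining $|\partial_t\psi^m|^{(m+1)/m}$ and $\psi^{m+1}$ contributions.

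For \eqref{Cac2} the scheme is identical, but with the space cut-off $\phi\in C_0^\infty(B_{r_2}(x_0))$ satisfying $\phi\equiv 1$ on $B_{r_1}(x_0)$ and $|D\phi|\leq C(r_2-r_1)^{-1}$, and supplemented by the analogous estimate for the $\{u<c\}$ truncation, where the comparison function lifts $u$ upwards; there the obstacle constraint $v\geq u\geq\psi$ is automatic and no $\psi$-correction is needed on that side. Adding the two one-sided inequalities yields the symmetric combination $|u-c|\,|u^m-c^m|$, and the coercivity in \eqref{A} together with restriction to $B_{r_1}(x_0)$ promotes $\phi^2|Du^m|^2$ to the full $|Du^m|^2$ appearing on the left of \eqref{Cac2}. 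The main obstacle throughout is exactly that: the obstacle constraint. Designing $v$ so that $v\in K_\psi^\prime$ and simultaneously producing a clean truncation of $u^m$ at level $c^m$ forces the use of the cut $\max\{u^{\sharp},\psi\}$, and controlling the corrections on $\{u^{\sharp}<\psi\}$ is precisely where the hypotheses $D\psi^m\in L^2$ and $\partial_t\psi^m\in L^{(m+1)/m}$ are consumed. Everything else reduces to familiar time-mollification calculations, Young's inequality, and the limit $h\downarrow 0$.
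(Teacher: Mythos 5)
Your general scheme (time mollification, an obstacle-admissible truncation-type comparison map, Young absorption, Lemma \ref{auxiliary}) is the right family of ideas, and your treatment of the diffusion term and of the upward truncation for \eqref{Cac2} is consistent with what the paper does (the paper truncates $[\![u^m]\!]_h$ at the level $\psi_c^m=\max\{c^m,\psi^m\}$ with the external cutoff $\eta=\phi^2$, while you put $\phi^2$ inside the comparison map and take the maximum with $\psi$; both are admissible after mollification). However, there is a genuine gap in your handling of the parabolic term for \eqref{Cac1}. The splitting into $\{u^\sharp\ge\psi\}$ and $\{u^\sharp<\psi\}$ is time dependent, so you cannot integrate by parts regionwise and claim that the good region "produces the $\esssup_t$ term". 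Working with the max globally, $v^m-u^m=-\min\{\phi^2(u^m-c^m)_+,\,u^m-\psi^m\}$, and the boundary quantity that actually appears at the time slice $t_1$ is
\begin{equation*}
\int_{B_{r_2}(x_0)}\Big(\int_{\psi}^{u}\min\{\phi^2(y^m-c^m)_+,\,y^m-\psi^m\}\,\mathrm{d}y\Big)(\cdot,t_1)\,\mathrm{d}x,
\end{equation*}
not $\int_{B_{r_2}(x_0)}\phi^2(u-c)_+(u^m-c^m)_+(\cdot,t_1)\,\mathrm{d}x$. To recover the latter you must control, at the \emph{fixed} time $t_1$, obstacle quantities of the type $\int_{B_{r_2}(x_0)}(u-c)_+(\psi^m-c^m)_+(\cdot,t_1)\,\mathrm{d}x$ and $\int_{B_{r_2}(x_0)}(\psi-c)_+(\psi^m-c^m)_+(\cdot,t_1)\,\mathrm{d}x$.

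The Young-inequality absorption you invoke only disposes of space-time integrals such as $u\,|\partial_t\psi^m|\chi_{\{u>c\}}$ and $\psi^{m+1}\chi_{\{u>c\}}$; it says nothing about a term evaluated on a single time slice, and such a term cannot be dominated by space-time integrals of $\Psi$ without using the time derivative of the obstacle. This is exactly the core of the paper's proof, which your sketch dismisses as routine: the pointwise estimates \eqref{L1L2L3}, \eqref{L1}, \eqref{L2} absorb part of these slice terms into $\frac14(u-c)(u^m-c^m)$, and the residual slice quantity $\int_{B_{r_2}(x_0)}(\psi-c)_+(\psi^m-c^m)_+(\cdot,t_1)\,\mathrm{d}x$ is converted into space-time integrals of $\psi^{m+1}$ and $|\partial_t\psi^m|^{\frac{m+1}{m}}$ by comparing it with its time average over $\Lambda_{s_2}(t_0)$ and integrating $\partial_t\big[(\psi-c)_+(\psi^m-c^m)_+\big]$, as in \eqref{VIII-1}--\eqref{VIII}. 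Your choice of comparison map does not avoid this difficulty (it merely relocates it to the boundary term generated by the correction on $\{u^\sharp<\psi\}$), and since your proposal contains no substitute for this step, the proof as sketched does not close.
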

\begin{proof}We begin with the proof of \eqref{Cac1}, which is the most difficult part of the proof.
In the variational inequality \eqref{weak identity} we choose
\begin{equation}\label{comparison map}v^m=[\![u^m]\!]_h-([\![u^m]\!]_h-\psi_c^m)_++\|\psi^m-[\![\psi^m]\!]_h\|_{L^\infty(\Omega_T)}\end{equation}
as a comparison map,
where the function $\psi_c$ is defined by
$$\psi_c^m=\max\{c^m,\psi^m\}=c^m+(\psi^m-c^m)_+.$$
It is easy to check that $v\in K_\psi^\prime$.
We first remark that since $u\geq\psi$, two superlevel sets $\{u\geq c\}$ and $\{u\geq \psi_c\}$ are equal.
More precisely, the relation
\begin{equation}\label{set identity}\{x\in B_{r_2}(x_0):\ u(x,t)\geq c\}=\{x\in B_{r_2}(x_0):\ u(x,t)\geq\psi_c\}\end{equation}
holds true for any $t\in\Lambda_{s_2}(t_0)$. Let $\eta=\phi^2$ and $\alpha\in W_0^{1,\infty}([0,T],\mathbb{R}_+)$ be
a fixed cut-off function which will be determined later.

We now proceed to establish an energy estimate from the variational inequality \eqref{weak identity}.
For the first term on the left-hand side of \eqref{weak identity} we compute
\begin{equation}\begin{split}\label{time1}
\langle \!\langle&\partial_t u,\alpha\eta (v^m-u^m)\rangle \!\rangle\\&=
\iint_{\Omega_T}\eta\alpha^\prime\left(\frac{1}{m+1}u^{m+1}-u[\![u^m]\!]_h+u([\![u^m]\!]_h-\psi_c^m)_+-u
\|\psi^m-[\![\psi^m]\!]_h\|_{L^\infty(\Omega_T)}\right)\,\mathrm {d}x\mathrm{d}t
\\&\quad-\iint_{\Omega_T}\eta\alpha u\partial_tv^m\,\mathrm {d}x\mathrm{d}t.
\end{split}\end{equation}
In view of \eqref{comparison map}, we deduce
$$\partial_tv^m=\partial_t[\![u^m]\!]_h\chi_{\{[\![u^m]\!]_h\leq\psi_c^m\}}+\partial_t\psi_c^m\chi_{\{[\![u^m]\!]_h>\psi_c^m\}}$$
and the second term on the right-hand side of \eqref{time1} is estimated above by
\begin{equation*}\begin{split}
-&\iint_{\Omega_T}\eta\alpha u\partial_tv^m\,\mathrm {d}x\mathrm{d}t
\\&=-\iint_{\Omega_T\cap \{[\![u^m]\!]_h\leq\psi_c^m\}}\eta\alpha u\partial_t[\![u^m]\!]_h\,\mathrm {d}x\mathrm{d}t-\iint_{\Omega_T\cap\{[\![u^m]\!]_h>\psi_c^m\}}\eta\alpha u\partial_t\psi_c^m\,\mathrm {d}x\mathrm{d}t
\\&=-\iint_{\Omega_T\cap \{[\![u^m]\!]_h\leq\psi_c^m\}}\eta\alpha (u-[\![u^m]\!]_h^{\frac{1}{m}})\frac{1}{h}
\left(u^m-[\![u^m]\!]_h\right)\,\mathrm {d}x\mathrm{d}t
\\&\quad-\iint_{\Omega_T\cap \{[\![u^m]\!]_h\leq\psi_c^m\}}\eta\alpha [\![u^m]\!]_h^{\frac{1}{m}}
\partial_t[\![u^m]\!]_h\,\mathrm {d}x\mathrm{d}t
\\&\quad -\iint_{\Omega_T\cap\{[\![u^m]\!]_h>\psi_c^m\}}\eta\alpha u\partial_t\psi_c^m\,\mathrm {d}x\mathrm{d}t
\\&\leq-\iint_{\Omega_T\cap \{[\![u^m]\!]_h\leq\psi_c^m\}}\eta\alpha [\![u^m]\!]_h^{\frac{1}{m}}
\partial_t[\![u^m]\!]_h\,\mathrm {d}x\mathrm{d}t
 -\iint_{\Omega_T\cap\{[\![u^m]\!]_h>\psi_c^m\}}\eta\alpha u\partial_t\psi_c^m\,\mathrm {d}x\mathrm{d}t,
\end{split}\end{equation*}
where we have used the identity $\partial_t[\![u^m]\!]_h=h^{-1}(u^m-[\![u^m]\!]_h)$.
Noting that
\begin{equation*}\begin{split}\iint_{\Omega_T}&\eta\alpha [\![u^m]\!]_h^{\frac{1}{m}}
\left[\partial_t[\![u^m]\!]_h-\partial_t([\![u^m]\!]_h-\psi_c^m)_+\right]\,\mathrm {d}x\mathrm{d}t\\&=
\iint_{\Omega_T\cap \{[\![u^m]\!]_h\leq\psi_c^m\}}\eta\alpha [\![u^m]\!]_h^{\frac{1}{m}}
\partial_t[\![u^m]\!]_h\,\mathrm {d}x\mathrm{d}t+\iint_{\Omega_T\cap\{[\![u^m]\!]_h>\psi_c^m\}}\eta\alpha
[\![u^m]\!]_h^{\frac{1}{m}}\partial_t\psi_c^m\,\mathrm {d}x\mathrm{d}t,
\end{split}\end{equation*}
we have
\begin{equation*}\begin{split}
-&\iint_{\Omega_T}\eta\alpha u\partial_tv^m\,\mathrm {d}x\mathrm{d}t
\\&\leq -\iint_{\Omega_T}\eta\alpha [\![u^m]\!]_h^{\frac{1}{m}}
\left[\partial_t[\![u^m]\!]_h-\partial_t([\![u^m]\!]_h-\psi_c^m)_+\right]\,\mathrm {d}x\mathrm{d}t
\\&\quad+\iint_{\Omega_T\cap\{[\![u^m]\!]_h>\psi_c^m\}}\eta\alpha
[\![u^m]\!]_h^{\frac{1}{m}}\partial_t\psi_c^m\,\mathrm {d}x\mathrm{d}t
-\iint_{\Omega_T\cap\{[\![u^m]\!]_h>\psi_c^m\}}\eta\alpha u\partial_t\psi_c^m\,\mathrm {d}x\mathrm{d}t.
\end{split}\end{equation*}
Integrating by parts, we obtain
\begin{equation}\begin{split}\label{time2}
-&\iint_{\Omega_T}\eta\alpha u\partial_tv^m\,\mathrm {d}x\mathrm{d}t
\\&\leq\iint_{\Omega_T}\eta\alpha^\prime\frac{m}{m+1} [\![u^m]\!]_h^{\frac{m+1}{m}}
\,\mathrm {d}x\mathrm{d}t-\iint_{\Omega_T}\eta\alpha^\prime [\![u^m]\!]_h^{\frac{1}{m}}([\![u^m]\!]_h-\psi_c^m)_+\,\mathrm {d}x\mathrm{d}t
\\&\quad+\iint_{\Omega_T\cap\{[\![u^m]\!]_h>\psi_c^m\}}\eta\alpha
[\![u^m]\!]_h^{\frac{1}{m}}\partial_t\psi_c^m\,\mathrm {d}x\mathrm{d}t
-\iint_{\Omega_T\cap\{[\![u^m]\!]_h>\psi_c^m\}}\eta\alpha u\partial_t\psi_c^m\,\mathrm {d}x\mathrm{d}t
\\&\quad-\iint_{\Omega_T}\eta\alpha\partial_t \big([\![u^m]\!]_h^{\frac{1}{m}}\big)([\![u^m]\!]_h-\psi_c^m)_+\,\mathrm {d}x\mathrm{d}t.
\end{split}\end{equation}
Combining \eqref{time2} with \eqref{time1}, we infer that
\begin{equation}\begin{split}\label{time3}
\langle \!\langle&\partial_t u,\alpha\eta (v^m-u^m)\rangle \!\rangle\\&\leq
\iint_{\Omega_T}\eta\alpha^\prime\left(\frac{m}{m+1} [\![u^m]\!]_h^{\frac{m+1}{m}}
+\frac{1}{m+1}u^{m+1}-u[\![u^m]\!]_h\right)
\,\mathrm {d}x\mathrm{d}t
\\&\quad+\iint_{\Omega_T}\eta\alpha^\prime \big(u-[\![u^m]\!]_h^{\frac{1}{m}}\big)
([\![u^m]\!]_h-\psi_c^m)_+\,\mathrm {d}x\mathrm{d}t
\\&\quad-\iint_{\Omega_T}\eta\alpha^\prime \|\psi^m-[\![\psi^m]\!]_h\|_{L^\infty(\Omega_T)}\,\mathrm {d}x\mathrm{d}t
\\&\quad+\iint_{\Omega_T\cap\{[\![u^m]\!]_h>\psi_c^m\}}\eta\alpha
\big([\![u^m]\!]_h^{\frac{1}{m}}-u\big)\partial_t\psi_c^m\,\mathrm {d}x\mathrm{d}t
\\&\quad+\iint_{\Omega_T}(-1)\eta\alpha\partial_t \big([\![u^m]\!]_h^{\frac{1}{m}}\big)([\![u^m]\!]_h-\psi_c^m)_+\,\mathrm {d}x\mathrm{d}t
\\&=:\uppercase\expandafter{\romannumeral1}+\uppercase\expandafter{\romannumeral2}-\uppercase\expandafter{\romannumeral3}+
\uppercase\expandafter{\romannumeral4}+\uppercase\expandafter{\romannumeral5},
\end{split}\end{equation}
with the obvious meaning of $\uppercase\expandafter{\romannumeral1}$, $\uppercase\expandafter{\romannumeral2}$, $\uppercase\expandafter{\romannumeral3}$, $\uppercase\expandafter{\romannumeral4}$
and $\uppercase\expandafter{\romannumeral5}$. Observe that $[\![\psi^m]\!]_h\to \psi^m$ and
$[\![u^m]\!]_h\to u^m$ uniformly in $\Omega_T$ as $h\downarrow 0$,
since $\psi$ and $u$ are locally continuous. We apply Lebesgue's dominated convergence theorem to obtain
$\uppercase\expandafter{\romannumeral1}+\uppercase\expandafter{\romannumeral2}+\uppercase\expandafter{\romannumeral3}+
\uppercase\expandafter{\romannumeral4}\to 0$ as $h\downarrow 0$. It remains to treat the term $\uppercase\expandafter{\romannumeral5}$.

Noting that
\begin{equation*}\begin{split}
\frac{\partial}{\partial t}\big[\int_{\psi_c}^{[\![u^m]\!]_h^{\frac{1}{m}}}(y^m-\psi_c^m)_+\,\mathrm {d}y\big]=
\partial_t\big([\![u^m]\!]_h^{\frac{1}{m}}\big)([\![u^m]\!]_h-\psi_c^m)_+-
\partial_t \psi_c^m([\![u^m]\!]_h^{\frac{1}{m}}-\psi_c)_+,
\end{split}\end{equation*}
we use integration by parts to get
\begin{equation*}\begin{split}
\uppercase\expandafter{\romannumeral5}=&\iint_{\Omega_T}\eta\alpha^\prime\int_{\psi_c}^{[\![u^m]\!]_h^{\frac{1}{m}}}(y^m-\psi_c^m)_+\,\mathrm {d}y\,\mathrm {d}x\mathrm{d}t
-\iint_{\Omega_T}\eta\alpha\partial_t \psi_c^m([\![u^m]\!]_h^{\frac{1}{m}}-\psi_c)_+\,\mathrm {d}x\mathrm{d}t
\\=&:\uppercase\expandafter{\romannumeral5}_1+\uppercase\expandafter{\romannumeral5}_2,
\end{split}\end{equation*}
with the obvious meaning of $\uppercase\expandafter{\romannumeral5}_1$ and $\uppercase\expandafter{\romannumeral5}_2$.
We first observe that
\begin{equation*}\begin{split}
\uppercase\expandafter{\romannumeral5}_2\to
-\iint_{\Omega_T}\eta\alpha\partial_t \psi_c^m(u-\psi_c)_+\,\mathrm {d}x\mathrm{d}t
\qquad\text{as}\qquad  h\downarrow 0,
\end{split}\end{equation*}
since $[\![u^m]\!]_h\to u^m$ uniformly in $\Omega_T$ as $h\downarrow 0$.
Our next aim is to obtain lower and upper bounds for $\uppercase\expandafter{\romannumeral5}_1$. To this end,
we need to determine the cut-off function in time $\alpha(t)$.
For a fixed time level $t_1\in \Lambda_{s_1}(t_0)\subset(0,T)$, we define
\begin{equation}\label{alpha}
	\alpha(t)=\begin{cases}
0,&\quad \text{for}\quad t\in (0,t_0-s_2],\\
	1-\frac{1}{s_2-s_1}(t_0-s_1-t),&\quad \text{for}\quad t\in (t_0-s_2,t_0-s_1],
\\
	1,&\quad \text{for}\quad t\in (t_0-s_1,t_1-\epsilon],\\
	1-\frac{1}{\epsilon}(t-t_1+\epsilon),&\quad \text{for}\quad t\in (t_1-\epsilon,t_1],
\\ 0,&\quad \text{for}\quad t\in (t_1,T),
	\end{cases}
\end{equation}
where $0<\epsilon\ll 1$.
We now turn our
attention to the estimate of $\uppercase\expandafter{\romannumeral5}_1$. From \eqref{auxiliary1}, we find that
\begin{equation*}\begin{split}\uppercase\expandafter{\romannumeral5}_1&=
\iint_{\Omega_T\cap \{[\![u^m]\!]_h\geq \psi_c^m\}}\phi^2\alpha^\prime\int_{\psi_c}^{[\![u^m]\!]_h^{\frac{1}{m}}}(y^m-\psi_c^m)
\,\mathrm {d}y\,\mathrm {d}x\mathrm{d}t
\\&\leq \frac{1}{s_2-s_1}
\int_{t_0-s_2}^{t_0-s_1}\int_{B_{r_2}(x_0)\cap \{[\![u^m]\!]_h(\cdot,t)\geq \psi_c^m (\cdot,t)\}}([\![u^m]\!]_h^{\frac{1}{m}}-\psi_c)\,
([\![u^m]\!]_h-\psi_c^m)\,\mathrm {d}x\mathrm{d}t
\\&\quad-\frac{1}{2\epsilon}\int_{t_1-\epsilon}^{t_1}\int_{B_{r_2}(x_0)\cap \{[\![u^m]\!]_h(\cdot,t)\geq \psi_c^m (\cdot,t)\}}\phi^2([\![u^m]\!]_h^{\frac{1}{m}}-\psi_c)\,
([\![u^m]\!]_h-\psi_c^m)\,\mathrm {d}x\mathrm{d}t.
\end{split}\end{equation*}
Applying Lebesgue's dominated convergence theorem, we pass to the limit $ h\downarrow 0$ on the right-hand side and conclude that
\begin{equation*}\begin{split}
\lim\sup_{\epsilon\downarrow 0}\lim\sup_{h\downarrow 0}\uppercase\expandafter{\romannumeral5}_1&\leq \frac{1}{s_2-s_1}
\int_{t_0-s_2}^{t_0-s_1}\int_{B_{r_2}(x_0)\cap \{u(\cdot,t)\geq \psi_c (\cdot,t)\}}(u-\psi_c)\,
(u^m-\psi_c^m)\,\mathrm {d}x\mathrm{d}t
\\&\quad-\frac{1}{2}\int_{B_{r_2}(x_0)\cap \{u(\cdot,t_1)\geq \psi_c (\cdot,t_1)\}}\phi^2(\cdot)(u-\psi_c)(\cdot,t_1)\,
(u^m-\psi_c^m)(\cdot,t_1)\,\mathrm {d}x.
\end{split}\end{equation*}
From the preceding arguments, we infer from \eqref{time3} that for any $t_1\in \Lambda_{s_1}(t_0)$,
there holds
\begin{equation}\begin{split}\label{time4}
\lim\sup_{\epsilon\downarrow 0}\lim\sup_{h\downarrow 0}&\,\langle \!\langle\partial_t u,\alpha\eta (v^m-u^m)\rangle \!\rangle\\&\leq
 \frac{1}{s_2-s_1}
\int_{t_0-s_2}^{t_0-s_1}\int_{B_{r_2}(x_0)\cap \{u(\cdot,t)\geq \psi_c (\cdot,t)\}}(u-\psi_c)\,
(u^m-\psi_c^m)\,\mathrm {d}x\mathrm{d}t
\\&\quad+\iint_{Q_{r_2,s_2}(z_0)}|\partial_t \psi_c^m|\,(u-\psi_c)_+\,\mathrm {d}x\mathrm{d}t
\\&\quad-\frac{1}{2}\int_{B_{r_2}(x_0)\cap \{u(\cdot,t_1)\geq \psi_c (\cdot,t_1)\}}\phi^2(u-\psi_c)(\cdot,t_1)\,
(u^m-\psi_c^m)(\cdot,t_1)\,\mathrm {d}x
\\&=:\uppercase\expandafter{\romannumeral6}+\uppercase\expandafter{\romannumeral7}-\uppercase\expandafter{\romannumeral8},
\end{split}\end{equation}
with the obvious meaning of $\uppercase\expandafter{\romannumeral6}$, $\uppercase\expandafter{\romannumeral7}$ and $\uppercase\expandafter{\romannumeral8}$. To estimate $\uppercase\expandafter{\romannumeral6}$, we note that
$u-\psi_c\leq u-c$ on the set $\{u\geq \psi_c\}$. From this inequality and \eqref{set identity}, we conclude that
\begin{equation}\begin{split}\label{VI}
\uppercase\expandafter{\romannumeral6}&\leq \frac{1}{s_2-s_1}
\iint_{Q_{r_2,s_2}(z_0)\cap \{u\geq \psi_c\}}(u-c)\,
(u^m-c^m)\,\mathrm {d}x\mathrm{d}t
\\&=\frac{1}{s_2-s_1}
\iint_{Q_{r_2,s_2}(z_0)\cap \{u\geq c\}}(u-c)\,
(u^m-c^m)\,\mathrm {d}x\mathrm{d}t
\\&=\frac{1}{s_2-s_1}
\iint_{Q_{r_2,s_2}(z_0)}(u-c)_+\,
(u^m-c^m)_+\,\mathrm {d}x\mathrm{d}t.
\end{split}\end{equation}
We now come to the estimate of $\uppercase\expandafter{\romannumeral7}$. We first observe that $\{\psi>c\}\subset \{u>c\}$,
$\partial_t \psi_c=\partial_t(\psi^m-c^m)_+$ and
\begin{equation*}\begin{split}u=(u^m)^{\frac{1}{m}}=(u^m-c^m+c^m)^{\frac{1}{m}}\leq 2^{\frac{1-m}{m}}
((u^m-c^m)_+^{\frac{1}{m}}+c).\end{split}\end{equation*}
From this inequality, we conclude that
\begin{equation*}\begin{split}
\uppercase\expandafter{\romannumeral7}&\leq\iint_{Q_{r_2,s_2}(z_0)\cap \{\psi>c\}}|\partial_t \psi^m|\,(u-c)\,\mathrm {d}x\mathrm{d}t
\\&\leq 2^{\frac{1-m}{m}}\iint_{Q_{r_2,s_2}(z_0)\cap \{\psi>c\}}|\partial_t \psi^m|\,(u^m-c^m)^{\frac{1}{m}}\,\mathrm {d}x\mathrm{d}t
+
2^{\frac{1-m}{m}}\iint_{Q_{r_2,s_2}(z_0)\cap \{\psi>c\}}c|\partial_t \psi^m|\,\mathrm {d}x\mathrm{d}t
\\&\leq \frac{m2^{\frac{1-m}{m}}}{m+1}\iint_{Q_{r_2,s_2}(z_0)}|\partial_t \psi^m|^{\frac{m+1}{m}}\chi_{\{u>c\}}\,\mathrm {d}x\mathrm{d}t
+\frac{2^{\frac{1-m}{m}}}{m}\iint_{Q_{r_2,s_2}(z_0)}(u^m-c^m)_+^{\frac{m+1}{m}}\,\mathrm {d}x\mathrm{d}t
\\&\quad+
2^{\frac{1-m}{m}}\iint_{Q_{r_2,s_2}(z_0)\cap \{\psi>c\}}\psi\,|\partial_t \psi^m|\,\mathrm {d}x\mathrm{d}t,
\end{split}\end{equation*}
where we have used Young's inequality for the last estimate. Since $0<m<1$, we have
$(u^m-c^m)_+^{\frac{m+1}{m}}\leq (u-c)_+(u^m-c^m)_+$. In view of $0<s_2-s_1<64R^2\leq 64$, we use Young's inequality to obtain
\begin{equation}\begin{split}\label{VII}
\uppercase\expandafter{\romannumeral7}&\leq \gamma \iint_{Q_{r_2,s_2}(z_0)}|\partial_t \psi^m|^{\frac{m+1}{m}}\chi_{\{u>c\}}\,\mathrm {d}x\mathrm{d}t
+\gamma \iint_{Q_{r_2,s_2}(z_0)}\psi^{m+1}\chi_{\{u>c\}}\,\mathrm {d}x\mathrm{d}t
\\&\quad+\frac{\gamma}{s_2-s_1}\iint_{Q_{r_2,s_2}(z_0)}(u-c)_+(u^m-c^m)_+\,\mathrm {d}x\mathrm{d}t,
\end{split}\end{equation}
where the constant $\gamma$ depends only upon $m$.
Our next aim is to find a lower bound for $\uppercase\expandafter{\romannumeral8}$. We fix $t_1\in \Lambda_{s_1}(t_0)$ and
consider the superlevel set $\{B_{r_2}(x_0): u(x,t_1)\geq \psi_c (x,t_1) \}$. On this set, $u\geq c$ and there holds
\begin{equation}\begin{split}\label{L1L2L3}
(u&-c)(u^m-c^m)\\&=(u-\psi_c)(u^m-\psi_c^m)+(u-\psi_c)(\psi_c^m-c^m)
\\&\quad+(\psi_c-c)(u^m-\psi_c^m)+(\psi_c-c)(\psi_c^m-c^m)
\\&\leq(u-\psi_c)(u^m-\psi_c^m)+(\psi-c)_+(\psi^m-c^m)_+
\\&\quad+(u-c)(\psi^m-c^m)_++(\psi-c)_+(u^m-c^m).
\end{split}\end{equation}
Denote $L_1=(u-c)(\psi^m-c^m)_+$ and $L_2=(\psi-c)_+(u^m-c^m)$.
To estimate $L_1$, we first consider the easy case $(\psi^m-c^m)_+\leq4^{-1}(u^m-c^m)$.
In this case, we get
\begin{equation}\begin{split}\label{L1-1}L_1\leq \frac{1}{4}(u-c)(u^m-c^m).\end{split}\end{equation}
While in the case $(\psi^m-c^m)_+>4^{-1}(u^m-c^m)$, we have $\psi\geq c$ and $u^m<4\psi^m-3c^m\leq 4\psi^m$. Since $\frac{1}{m}>1$, we find that
\begin{equation*}\begin{split}
u-c&=|(u^m)^{\frac{1}{m}}-(c^m)^{\frac{1}{m}}|
\\&\leq \gamma ((u^m)^{\frac{1}{m}-1}+(c^m)^{\frac{1}{m}-1})(u^m-c^m)
\\&\leq \gamma (\psi^{1-m}+c^{1-m})(\psi^m-c^m)
\\&=\gamma ((\psi^m)^{\frac{1}{m}-1}+(c^m)^{\frac{1}{m}-1})(\psi^m-c^m)
\\&\leq \gamma ((\psi^m)^{\frac{1}{m}}-(c^m)^{\frac{1}{m}})=\gamma (\psi-c)=\gamma (\psi-c)_+,
\end{split}\end{equation*}
where the constant $\gamma$ depends only on $m$. Combining this estimate with \eqref{L1-1}, we obtain
\begin{equation}\begin{split}\label{L1}L_1\leq \frac{1}{4}(u-c)(u^m-c^m)+\gamma (\psi-c)_+(\psi^m-c^m)_+.\end{split}\end{equation}
Next, we consider the estimate of $L_2$. In the case $(\psi-c)_+\leq 4^{-1}(u-c)$, we have
\begin{equation}\begin{split}\label{L2-1}L_2\leq \frac{1}{4}(u-c)(u^m-c^m).\end{split}\end{equation}
In the case $(\psi-c)_+> 4^{-1}(u-c)$, we see that $\psi\geq c$ and $u<4\psi-3c$. Furthermore, we conclude that there exists $\gamma=\gamma(m)$ such that
\begin{equation*}\begin{split}
u^m-c^m&\leq (4\psi-3c)^m-c^m
\\&\leq 4\gamma\left[(4\psi-3c)+c\right]^{m-1}(\psi-c)
\\&=4\gamma(4\psi-2c)^{m-1}(\psi-c)
\\&\leq \gamma(2\psi)^{m-1}(\psi-c)
\\&\leq \gamma(\psi+c)^{m-1}(\psi-c)\leq \gamma(\psi^m-c^m).
\end{split}\end{equation*}
Combining this estimate with \eqref{L2-1}, we have shown that the estimate
\begin{equation}\begin{split}\label{L2}L_2\leq \frac{1}{4}(u-c)(u^m-c^m)+\gamma (\psi-c)_+(\psi^m-c^m)_+\end{split}\end{equation}
holds in any case.
Therefore, we conclude from \eqref{L1L2L3}, \eqref{L1} and \eqref{L2} that the inequality
\begin{equation*}\begin{split}(u-c)(x,t_1)\,(u^m-c^m)(x,t_1)\leq &2(u-\psi_c)(x,t_1)\,(u^m-\psi_c^m)(x,t_1)\\&+4\gamma (\psi-c)_+(x,t_1)\,(\psi^m-c^m)_+(x,t_1)\end{split}\end{equation*}
holds for any $x\in\{B_{r_2}(x_0): u(x,t_1)\geq \psi_c (x,t_1) \}$.
We now turn our
attention to the estimate of $\uppercase\expandafter{\romannumeral8}$.
It follows from \eqref{set identity} that
\begin{equation}\begin{split}\label{VIII-1}&\int_{B_{r_2}(x_0)\cap \{u\geq c\}}\phi^2(u-c)(x,t_1)\,(u^m-c^m)(x,t_1)\,\mathrm {d}x
\\&=\int_{B_{r_2}(x_0)\cap \{u\geq \psi_c\}}\phi^2(u-c)(x,t_1)\,(u^m-c^m)(x,t_1)\,\mathrm {d}x
\\&\leq 2\int_{B_{r_2}(x_0)\cap \{u\geq \psi_c\}}\phi^2(u-\psi_c)(x,t_1)\,(u^m-\psi_c^m)(x,t_1)\,\mathrm {d}x
\\ &\quad+4\gamma \int_{B_{r_2}(x_0)} (\psi-c)_+(x,t_1)\,(\psi^m-c^m)_+(x,t_1)\,\mathrm {d}x,
\end{split}\end{equation}
since $\phi\leq1$.
It remains to treat the second term on the right-hand side of \eqref{VIII-1}. For $t_1\in \Lambda_{s_1}(t_0)$, we decompose
\begin{equation*}\begin{split}
\int_{B_{r_2}(x_0)}& (\psi-c)_+(x,t_1)\,(\psi^m-c^m)_+(x,t_1)\,\mathrm {d}x\\&=
\int_{B_{r_2}(x_0)} (\psi-c)_+(\cdot,t_1)\,(\psi^m-c^m)_+(\cdot,t_1)\,\mathrm {d}x-
\dashint_{\Lambda_{s_2}(t_0)}\int_{B_{r_2}(x_0)} (\psi-c)_+\,(\psi^m-c^m)_+\,\mathrm {d}x\mathrm {d}t
\\&\quad+\dashint_{\Lambda_{s_2}(t_0)}\int_{B_{r_2}(x_0)} (\psi-c)_+\,(\psi^m-c^m)_+\,\mathrm {d}x\mathrm {d}t
\\&=:\uppercase\expandafter{\romannumeral8}_1+\uppercase\expandafter{\romannumeral8}_2,
\end{split}\end{equation*}
with the obvious meaning of $\uppercase\expandafter{\romannumeral8}_1$ and $\uppercase\expandafter{\romannumeral8}_2$.
We first observe that
\begin{equation*}\begin{split}
\uppercase\expandafter{\romannumeral8}_2\leq \frac{1}{2s_2}\iint_{Q_{r_2,s_2}}(\psi-c)_+\,(\psi^m-c^m)_+\,\mathrm {d}x\mathrm {d}t
\leq \frac{1}{s_2-s_1}\iint_{Q_{r_2,s_2}}(u-c)_+\,(u^m-c^m)_+\,\mathrm {d}x\mathrm {d}t,
\end{split}\end{equation*}
since $(\psi-c)_+\leq (u-c)_+$ and $(\psi^m-c^m)_+\leq (u^m-c^m)_+$. To estimate $\uppercase\expandafter{\romannumeral8}_1$, we note
 that the obstacle function $\psi$ is differentiable in time variable. Since $\partial_t\psi=\partial_t(\psi^m)^{\frac{1}{m}}=\frac{1}{m}
 \psi^{1-m}\partial_t\psi^m$ and $\{\psi>c\}\subset \{u>c\}$, we have
\begin{equation*}\begin{split}
\uppercase\expandafter{\romannumeral8}_1&\leq \dashint_{\Lambda_{s_2}(t_0)}\int_{B_{r_1}(x_0)}\big| \int_{t_1}^t\partial_\tau\left[(\psi-c)_+\,(\psi^m-c^m)_+
\right]\,\mathrm {d}\tau\,\big|\,\mathrm {d}x\mathrm {d}t
\\&\leq \iint_{Q_{r_2,s_2}} |\partial_t(\psi-c)_+|\,(\psi^m-c^m)_+  \,\mathrm {d}x\mathrm {d}t
\\ &\quad+ \iint_{Q_{r_2,s_2}} |\partial_t(\psi^m-c^m)_+|\,(\psi-c)_+  \,\mathrm {d}x\mathrm {d}t
\\&\leq \gamma\iint_{Q_{r_2,s_2}} |\partial_t\psi^m|^{\frac{m+1}{m}}\chi_{\{\psi>c\}}\,\mathrm {d}x\mathrm {d}t
+\gamma\iint_{Q_{r_2,s_2}} \psi^{m+1}\chi_{\{\psi>c\}}\,\mathrm {d}x\mathrm {d}t
\\&\quad +\frac{1}{m}\iint_{Q_{r_2,s_2}} |\partial_t\psi^m|\,\psi^{1-m}(\psi^m-c^m)_+  \,\mathrm {d}x\mathrm {d}t
\\&\leq \gamma\iint_{Q_{r_2,s_2}} |\partial_t\psi^m|^{\frac{m+1}{m}}\chi_{\{u>c\}}\,\mathrm {d}x\mathrm {d}t
+\gamma\iint_{Q_{r_2,s_2}} \psi^{m+1}\chi_{\{u>c\}}\,\mathrm {d}x\mathrm {d}t,
\end{split}\end{equation*}
where the constant $\gamma$ depends only upon $m$. Combining the estimates obtained for $\uppercase\expandafter{\romannumeral8}_1$ and $\uppercase\expandafter{\romannumeral8}_2$,
we deduce from \eqref{VIII-1} the estimate
\begin{equation}\begin{split}\label{VIII}\frac{1}{4}\int_{B_{r_2}(x_0)\cap \{u\geq c\}}&\phi^2(u-c)(x,t_1)\,(u^m-c^m)(x,t_1)\,\mathrm {d}x
\\&\leq \uppercase\expandafter{\romannumeral8}+\frac{1}{s_2-s_1}\iint_{Q_{r_2,s_2}}(u-c)_+\,(u^m-c^m)_+\,\mathrm {d}x\mathrm {d}t
\\ &\quad+\gamma\iint_{Q_{r_2,s_2}} |\partial_t\psi^m|^{\frac{m+1}{m}}\chi_{\{u>c\}}\,\mathrm {d}x\mathrm {d}t
+\gamma\iint_{Q_{r_2,s_2}} \psi^{m+1}\chi_{\{u>c\}}\,\mathrm {d}x\mathrm {d}t.
\end{split}\end{equation}
From \eqref{time4}-\eqref{VII} and \eqref{VIII}, we are led to the conclusion that there exists a constant
$\gamma=\gamma(m)$ such that
\begin{equation}\begin{split}\label{time final}
\lim\sup_{\epsilon\downarrow 0}\lim\sup_{h\downarrow 0}\,\langle \!\langle&\partial_t u,\alpha\eta (v^m-u^m)\rangle \!\rangle\\&\leq
\gamma \iint_{Q_{r_2,s_2}(z_0)}|\partial_t \psi^m|^{\frac{m+1}{m}}\chi_{\{u>c\}}\,\mathrm {d}x\mathrm{d}t
+\gamma \iint_{Q_{r_2,s_2}(z_0)}\psi^{m+1}\chi_{\{u>c\}}\,\mathrm {d}x\mathrm{d}t
\\&\quad+\frac{\gamma}{s_2-s_1}\iint_{Q_{r_2,s_2}(z_0)}(u-c)_+(u^m-c^m)_+\,\mathrm {d}x\mathrm{d}t
\\&\quad-\frac{1}{4}\int_{B_{r_2}(x_0)\cap \{u\geq c\}}\phi^2(u-c)(x,t_1)\,(u^m-c^m)(x,t_1)\,\mathrm {d}x.
\end{split}\end{equation}
Another step in the proof of \eqref{Cac1} is to find an estimate for diffusion term in \eqref{weak identity}. We first note that
\begin{equation}\begin{split}\label{diffusion}
\lim\sup_{\epsilon\downarrow 0}\lim\sup_{h\downarrow 0}&\iint_{\Omega_T}\alpha A(x,t,u,Du^m)\cdot D(\eta(v^m-u^m))\,\mathrm {d}x\mathrm{d}t
\\&=-\iint_{\Omega_T\cap\{u>\psi_c\}}2\phi\zeta A(x,t,u,Du^m)\cdot (u^m-\psi_c^m)D\phi\,\mathrm {d}x\mathrm{d}t
\\&\quad -\iint_{\Omega_T\cap\{u>\psi_c\}}\phi^2\zeta A(x,t,u,Du^m)\cdot D(u^m-\psi_c^m)\,\mathrm {d}x\mathrm{d}t,
\end{split}\end{equation}
where
\begin{equation*}
	\zeta(t)=\begin{cases}
0,&\quad \text{for}\quad t\in (0,t_0-s_2],\\
	1-\frac{1}{s_2-s_1}(t_0-s_1-t),&\quad \text{for}\quad t\in (t_0-s_2,t_0-s_1],
\\
	1,&\quad \text{for}\quad t\in (t_0-s_1,t_1].
	\end{cases}
\end{equation*}
By Young's inequality and the growth assumption of the vector field $A$, we obtain the estimate for the first term on the right-hand
side
\begin{equation}\begin{split}\label{diffusion1}
\big|&\iint_{\Omega_T\cap\{u>\psi_c\}}2\phi\zeta A(x,t,u,Du^m)\cdot (u^m-\psi_c^m)D\phi\,\mathrm {d}x\mathrm{d}t\big|
\\&\leq 2\nu_1\iint_{\Omega_T\cap\{u>\psi_c\}}\phi\zeta|Du^m|(u^m-c^m)_+|D\phi|\,\mathrm {d}x\mathrm{d}t
\\&\leq \frac{\nu_0}{4}\iint_{\Omega_T\cap\{u>\psi_c\}}\phi^2\zeta|Du^m|^2\,\mathrm {d}x\mathrm{d}t
+\gamma \iint_{Q_{r_2,s_2}(z_0)}(u^m-c^m)_+^2|D\phi|^2\,\mathrm{d}x\mathrm{d}t,
\end{split}\end{equation}
where the constant $\gamma$ depends only upon $\nu_0$ and $\nu_1$.
Next, we consider the second term on the right-hand
side of \eqref{diffusion}. Using Young's inequality and the ellipticity
assumption of the vector field $A$, we deduce
\begin{equation}\begin{split}\label{diffusion2}
-&\iint_{\Omega_T\cap\{u>\psi_c\}}\phi^2\zeta A(x,t,u,Du^m)\cdot D(u^m-\psi_c^m)\,\mathrm {d}x\mathrm{d}t
\\ \leq &-\nu_0\iint_{\Omega_T\cap\{u>\psi_c\}}\phi^2\zeta|Du^m|^2\,\mathrm {d}x\mathrm{d}t
+ \nu_1\iint_{\Omega_T\cap\{u>\psi_c\}}\phi^2\zeta|Du^m||D\psi_c^m|\,\mathrm {d}x\mathrm{d}t
\\ \leq &-\frac{\nu_0}{2}\iint_{\Omega_T\cap\{u>\psi_c\}}\phi^2\zeta|Du^m|^2\,\mathrm {d}x\mathrm{d}t
+ \frac{\nu_1^2}{2\nu_0}\iint_{Q_{r_2,s_2}(z_0)}|D\psi^m|^2\chi_{\{u>c\}}\,\mathrm {d}x\mathrm{d}t.
\end{split}\end{equation}
Furthermore, we need to consider the estimate of the gradient on the superlevel set $\{u>c\}$.
Since $u\geq \psi$, we have
$$\{z\in \Omega_T:c<u(z)\leq\psi_c\}=\{z\in \Omega_T:u(z)>c\}\cap \{z\in \Omega_T:u(z)=\psi(z)\}$$
and therefore $Du^m=D\psi^m$ a.e. on $\{z\in \Omega_T:c<u(z)\leq\psi_c\}$. This implies that
\begin{equation}\begin{split}\label{diffusion3}
\iint_{\Omega_T}\phi^2\zeta&|D(u^m-c^m)_+|^2\,\mathrm {d}x\mathrm{d}t=
\iint_{\Omega_T\cap\{u>c\}}\phi^2\zeta|Du^m|^2\,\mathrm {d}x\mathrm{d}t\\&
=\iint_{\Omega_T\cap\{u>\psi_c\}}\phi^2\zeta|Du^m|^2\,\mathrm {d}x\mathrm{d}t
+\iint_{\Omega_T\cap\{c<u\leq \psi_c\}}\phi^2\zeta|Du^m|^2\,\mathrm {d}x\mathrm{d}t
\\&
=\iint_{\Omega_T\cap\{u>\psi_c\}}\phi^2\zeta|Du^m|^2\,\mathrm {d}x\mathrm{d}t
+\iint_{\Omega_T\cap\{u>c\}\cap\{u=\psi\}}\phi^2\zeta|D\psi^m|^2\,\mathrm {d}x\mathrm{d}t
\\&
\leq \iint_{\Omega_T\cap\{u>\psi_c\}}\phi^2\zeta|Du^m|^2\,\mathrm {d}x\mathrm{d}t
+\iint_{Q_{r_2,s_2}(z_0)}|D\psi^m|^2\chi_{\{u>c\}}\,\mathrm {d}x\mathrm{d}t.
\end{split}\end{equation}
Combining the estimates \eqref{diffusion}-\eqref{diffusion3}, we conclude that
\begin{equation*}\begin{split}
\lim\sup_{\epsilon\downarrow 0}\lim&\sup_{h\downarrow 0}\iint_{\Omega_T}\alpha A(x,t,u,Du^m)\cdot D(\eta(v^m-u^m))\,\mathrm {d}x\mathrm{d}t
\\&\leq -\frac{\nu_0}{4}\iint_{\Omega_T}\phi^2\zeta|D(u^m-c^m)_+|^2\,\mathrm {d}x\mathrm{d}t
+\gamma\iint_{Q_{r_2,s_2}(z_0)}(u^m-c^m)_+^2|D\phi|^2\,\mathrm{d}x\mathrm{d}t
\\&\quad+\gamma\iint_{Q_{r_2,s_2}(z_0)}|D\psi^m|^2\chi_{\{u>c\}}\,\mathrm {d}x\mathrm{d}t.
\end{split}\end{equation*}
This estimate together with \eqref{time final} yield
\begin{equation*}\begin{split}\int_{B_{r_2}(x_0)}&\phi^2(u-c)_+(x,t_1)\,(u^m-c^m)_+(x,t_1)\,
\mathrm{d}x+\int_{t_0-s_1}^{t_1}\int_{B_{r_2}(x_0)}\phi^2|D(u^m-c^m)_+|^2\,\mathrm{d}x\mathrm{d}t
\\&\leq \frac{\gamma}{s_2-s_1}\iint_{Q_{r_2,s_2}(z_0)}(u-c)_+(u^m-c^m)_+\,
\mathrm{d}x\mathrm{d}t
\\&\quad+\gamma\iint_{Q_{r_2,s_2}(z_0)}(u^m-c^m)_+^2|D\phi|^2\,\mathrm{d}x\mathrm{d}t
\\&\qquad+\gamma\iint_{Q_{r_2,s_2}(z_0)}\left(\psi^{m+1}+|\partial_t\psi^m|^{\frac{m+1}{m}}+|D\psi^m|^2\right)\chi_{\{u>c\}}\,
\mathrm{d}x\mathrm{d}t\end{split}\end{equation*}
for any $t_1\in \Lambda_{s_1}(t_0)$. This proves the desired estimate \eqref{Cac1} by taking the supremum over
$t_1\in \Lambda_{s_1}(t_0)$ in the first term and $t_1=t_0+s_1$ in the second one.

Finally, we come to the proof of \eqref{Cac2}.
This result will be proved if we can show that the estimate
\begin{equation}\begin{split}\label{Cac2 t1}\int_{B_{r_2}(x_0)}\phi^2(u-c)_-&(x,t_1)\,(u^m-c^m)_-(x,t_1)\,
\mathrm{d}x+\int_{t_0-s_1}^{t_1}\int_{B_{r_2}(x_0)}\phi^2|D(u^m-c^m)_-|^2\,\mathrm{d}x\mathrm{d}t
\\&\leq \frac{\gamma}{s_2-s_1}\iint_{Q_{r_2,s_2}(z_0)}(u-c)_-(u^m-c^m)_-\,
\mathrm{d}x\mathrm{d}t\\&\qquad+\gamma \iint_{Q_{r_2,s_2}(z_0)}(u^m-c^m)_-^2|\,D\phi\,|^2\,\mathrm{d}x\mathrm{d}t,
\end{split}\end{equation}
holds for any $t_1\in \Lambda_{s_1}(t_0)$. In order to prove this estimate, we will work on the sublevel set $\{u<c\}$ and the argument is similar in spirit to
\cite[Lemma 3.1 (ii)]{CS18} and \cite[Lemma 4.1]{GS}.

According to the proof of \cite[Lemma 3.1 (ii)]{CS18}, we set
\begin{equation*}v^m=[\![u^m]\!]_h+([\![u^m]\!]_h-c^m)_-+\|\psi^m-[\![\psi^m]\!]_h\|_{L^\infty(\Omega_T)}\end{equation*}
as a comparison map and obtain
\begin{equation*}\begin{split}
-&\iint_{\Omega_T}\eta\alpha u\partial_tv^m\,\mathrm {d}x\mathrm{d}t
\\&\leq\iint_{\Omega_T}\eta\alpha^\prime\frac{m}{m+1} [\![u^m]\!]_h^{\frac{m+1}{m}}
\,\mathrm {d}x\mathrm{d}t+\iint_{\Omega_T}\eta\alpha^\prime [\![u^m]\!]_h^{\frac{1}{m}}([\![u^m]\!]_h-c^m)_-\,\mathrm {d}x\mathrm{d}t
\\&\quad+\iint_{\Omega_T}\eta\alpha^\prime\int_{[\![u^m]\!]_h^{\frac{1}{m}}}^{c}(c^m-y^m)_+\,\mathrm {d}y\,\mathrm {d}x\mathrm{d}t,
\end{split}\end{equation*}
where the cut-off function $\alpha$ is defined in \eqref{alpha} and $\eta=\phi^2$.
To estimate the third term on the right-hand side, we infer from \eqref{auxiliary2} that
\begin{equation*}\begin{split}\frac{m}{2}([\![u^m]\!]_h^{\frac{1}{m}}-c)_-([\![u^m]\!]_h-c^m)_-\leq
\int_{[\![u^m]\!]_h^{\frac{1}{m}}}^{c}(c^m-y^m)_+\,\mathrm {d}y\leq ([\![u^m]\!]_h^{\frac{1}{m}}-c)_-([\![u^m]\!]_h-c^m)_-.
\end{split}\end{equation*}
At this point, the desired estimate \eqref{Cac2 t1} follows from a standard argument (see for instance \cite[page 26-28]{GS} and \cite[page 12]{CS18}) and
we omit the details.
The proof of the lemma is now complete.
\end{proof}
\section{Estimates on the spatial average}
This section is devoted to the study of a gluing Lemma, which concerns weighted mean values of the weak solution on
different time slices. We first state and prove the
gluing lemma on the condition that the functions $\Psi$ and $\partial_t \psi^{1-m}$ are locally integrable.
Let $B$ be an open ball in $\Omega\subset \mathbb{R}^n$ and let $\eta\geq 0$ be a smooth function supported in the compact set $\bar B$.
Here and subsequently,
we define 
\begin{equation*}\begin{split}(u(t))^\eta_B=\frac{1}{\int_B\eta\,\mathrm {d}x}\int_{B}u(x,t)\eta(x)\,\mathrm {d}x.\end{split}\end{equation*}
The following lemma is our main result in this section.
\begin{lemma}\label{gluing lemma} Let $u$ be a nonnegative weak solution to the obstacle problem in the sense of Definition \ref{definition}. Fix a point $z_0=(x_0,t_0)\in\Omega_T$ and assume that $Q_{r_1,s}(z_0)\subset Q_{r_2,s}(z_0)
\subset \Omega_T$. Let $\xi\in C_0^\infty (B_{r_2}(x_0))$, $0\leq \xi\leq1$ in $B_{r_2}(x_0)$,
$ \xi\equiv 1$ in $B_{r_1}(x_0)$ and $|D\xi|\leq 2(r_2-r_2)^{-1}$.
Let $\Psi_1$ be the quantity
\begin{equation*}\Psi_1=\left[\left(\ \fiint_{Q_{r_2,s}(z_0)}\Psi
\,\mathrm {d}x\mathrm{d}t\right)^{\frac{1}{m+1}} +\left(\ \fiint_{Q_{r_2,s}(z_0)}|\partial_t \psi^{1-m}|\,\mathrm {d}x\mathrm {d}t\right)^{\frac{1}{1-m}}\right].\end{equation*}
Then for any $t_1$, $t_2\in \Lambda_{s}(t_0)$, there holds
\begin{equation}\begin{split}\label{gluing 1}|(u(t_1))^\xi_{B_{r_2}(x_0)}&-(u(t_2))^\xi_{B_{r_2}(x_0)}|
\leq \gamma\left(\frac{s}{r_2-r_1}\right)\ \fiint_{Q_{r_2,s}(z_0)}|Du^m|\,\mathrm {d}x\mathrm{d}t+\gamma \left(\frac{s}{r_2-r_1}\right)\Psi_1
\\&+\gamma \left(\frac{s}{r_2-r_1}\right)^m\left(\ \fiint_{Q_{r_2,s}(z_0)}\psi^{1-m}\,\mathrm {d}x\mathrm{d}t\right)
\left(\ \fiint_{Q_{r_2,s}(z_0)}\Psi
\,\mathrm {d}x\mathrm{d}t\right)^{\frac{m}{m+1}}
\\&\quad+\gamma
 s\left(\frac{r_2-r_1}{s}\right)^m\left(\ \fiint_{Q_{r_2,s}(z_0)}\Psi \,\mathrm {d}x\mathrm{d}t\right)^{\frac{1}{m+1}}
\end{split}\end{equation}
and
\begin{equation}\begin{split}\label{gluing 2}|(u(t_1))^\xi_{B_{r_2}(x_0)}&-(u(t_2))^\xi_{B_{r_2}(x_0)}|
\leq \gamma\left(\frac{s}{r_2-r_1}\right)\ \fiint_{Q_{r_2,s}(z_0)}|Du^m|\,\mathrm {d}x\mathrm{d}t+\gamma \left(\frac{s}{r_2-r_1}\right)\Psi_1
\\&+\gamma r_2\left(\ \fiint_{Q_{r_2,s}(z_0)}\psi^{1-m}\,\mathrm {d}x\mathrm{d}t\right)\left(\ \fiint_{Q_{r_2,s}(z_0)}\Psi
 \,\mathrm {d}x\mathrm{d}t\right)^{\frac{m}{m+1}}
 \\&\quad+\gamma r_2^{\frac{1}{m}}\left(\ \fiint_{Q_{r_2,s}(z_0)}\Psi \,\mathrm {d}x\mathrm{d}t\right)^{\frac{1}{m+1}},
\end{split}\end{equation}
where the constat $\gamma$ depends only upon $\nu_0$, $\nu_1$ and $m$.
\end{lemma}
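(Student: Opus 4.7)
My plan is to derive both \eqref{gluing 1} and \eqref{gluing 2} by testing the variational inequality \eqref{weak identity} with carefully chosen comparison maps that perturb $u^m$ by a small constant multiple of $\xi$, in the spirit of the non-obstacle gluing lemma in \cite{GS}. Fix a small $\delta>0$ and consider two comparison maps: for the upward bound, take
$$v_{+}^{m}=[\![u^m]\!]_h+\delta\xi+\|\psi^m-[\![\psi^m]\!]_h\|_{L^\infty(\Omega_T)},$$
which manifestly satisfies $v_{+}\geq\psi$ and lies in $K_\psi'$; for the downward bound, take
$$v_{-}^{m}=\max\!\bigl\{[\![u^m]\!]_h-\delta\xi,\,\psi^m\bigr\}+\|\psi^m-[\![\psi^m]\!]_h\|_{L^\infty(\Omega_T)},$$
which enforces the obstacle constraint despite the negative shift. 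In both cases I set $\eta=\xi$ in \eqref{weak identity} and choose the time cutoff $\alpha$ of the form \eqref{alpha} mollifying $\chi_{[t_1,t_2]}$, and then let $h\downarrow 0$ followed by $\epsilon\downarrow 0$ as in the proof of Lemma~\ref{Caccioppoli}.

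After the limits, the parabolic term reduces (up to an error coming from the set where the clip $v_{-}^m=\psi^m$ is active) to
$$\pm\,\delta\Bigl[\int_{B_{r_2}(x_0)}u(x,t_2)\xi\,\mathrm{d}x-\int_{B_{r_2}(x_0)}u(x,t_1)\xi\,\mathrm{d}x\Bigr],$$
while the diffusion term contributes $\mp\,\delta\iint_{Q_{r_2,s}(z_0)}A(x,t,u,Du^m)\cdot D\xi\,\mathrm{d}x\mathrm{d}t$. Dividing out the factor $\delta\int_{B_{r_2}(x_0)}\xi\,\mathrm{d}x$, which is comparable to $|B_{r_1}(x_0)|$ by hypothesis on $\xi$, and invoking the growth bound $|A|\leq\nu_1|Du^m|$ together with $|D\xi|\leq 2(r_2-r_1)^{-1}$, the diffusion contribution is immediately controlled by the leading term $\gamma\,\tfrac{s}{r_2-r_1}\fiint|Du^m|\,\mathrm{d}x\mathrm{d}t$ that appears in both bounds.

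The substantive work is then controlling the obstacle error. This naturally decomposes into three pieces: (i)~a correction from the $\|\psi^m-[\![\psi^m]\!]_h\|_{L^\infty}$ shift, which after $h\downarrow 0$ contributes a term proportional to $\Psi_1$ using the local bound on $|\partial_t\psi^{1-m}|$; (ii)~a time-derivative contribution on the clipped region, which via the pointwise identity $\partial_t\psi=m^{-1}\psi^{1-m}\partial_t\psi^m$ and H\"older inequality produces the mixed expression featuring $\fiint\psi^{1-m}$ and $(\fiint\Psi)^{m/(m+1)}$; and (iii)~a pure remainder where Lemma~\ref{auxiliary} converts $u-\psi$ on the contact-like region into $u^m-\psi^m$ weighted by $\psi^{1-m}$, and which is then absorbed by Young's inequality. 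The two alternatives \eqref{gluing 1} and \eqref{gluing 2} correspond to two H\"older splits of step~(iii): in \eqref{gluing 1} I trade the time scale $s$ against the spatial scale $r_2-r_1$ using parabolic scaling, producing the $(s/(r_2-r_1))^m$ and $s((r_2-r_1)/s)^m$ powers; in \eqref{gluing 2} I use a direct Poincar\'e-type estimate on $B_{r_2}$ yielding the cleaner $r_2$ and $r_2^{1/m}$ powers, which are more convenient when $s$ is not comparable to $(r_2-r_1)^2$.

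The main obstacle will be the careful limit procedure $h\downarrow 0$ then $\epsilon\downarrow 0$ in the variational inequality, tracking the various truncation errors produced by the clip in $v_{-}^m$ and the finite width of the time cutoff in $\alpha$. Because the obstacle problem supplies only an inequality, the upward and downward comparisons yield two-sided bounds whose error terms must be separately absorbed into the right-hand side, and it is precisely the local boundedness of $\partial_t\psi^{1-m}$ assumed in~\eqref{lipschitz} that enables the time derivative of the $\|\psi^m-[\![\psi^m]\!]_h\|_{L^\infty}$ correction to be estimated uniformly and absorbed into $\Psi_1$. Once these errors are organised, the two estimates \eqref{gluing 1} and \eqref{gluing 2} follow from the same computation with the one-step variant in the H\"older split at the very end.
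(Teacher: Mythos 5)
Your overall frame (test \eqref{weak identity} with an upward and a downward comparison map, mollify in time, pass $h\downarrow 0$, $\epsilon\downarrow 0$) matches the paper, but the core mechanism of your proposal does not work: you treat the perturbation as an \emph{infinitesimal} shift $\delta\xi$ of $u^m$ and then divide by $\delta$. For the obstacle problem this linearization fails at the clipped region of $v_-^m=\max\{[\![u^m]\!]_h-\delta\xi,\psi^m\}+\dots$. There the comparison map equals $\psi^m$ (up to the mollification correction), so the diffusion term produces a contribution of the type $\iint_{\{u^m-\psi^m<\delta\xi\}}\alpha\,\xi\,A\cdot D(\psi^m-u^m)\,\mathrm{d}x\mathrm{d}t$, which carries \emph{no} factor of $\delta$: it can only be estimated by quantities like $\iint|D\psi^m|^2$ (plus a piece absorbed into the gradient term). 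After dividing by $\delta\int\xi$ this becomes, in the notation of the paper's proof, the term $\gamma\,\frac{s}{\mu^m}\fiint_{Q_{r_2,s}(z_0)}|D\psi^m|^2\,\mathrm{d}x\mathrm{d}t$ in \eqref{mumu} with $\mu^m=\delta$, which blows up as $\delta\downarrow0$; the fact that the clipped set shrinks to the contact set (where $Du^m=D\psi^m$ a.e.) gives only a qualitative $o(1)$ for the unnormalized integral, not the $O(\delta)$ rate your division requires. This is exactly why the paper does \emph{not} send the shift to zero: it keeps a finite shift $\mu$, derives the $\mu$-dependent inequality \eqref{mumu} valid for all $\mu>0$ (using Lemma \ref{auxiliary} and integrations by parts to handle the time term on the clipped set, producing the $\mu^{2m}\int\psi^{1-m}$, $\mu^{m+1}$, $\mu^{1-m}|\partial_t\psi^m|$ pieces), and then \emph{optimizes} $\mu$. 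The two estimates \eqref{gluing 1} and \eqref{gluing 2} are obtained from the two choices $\mu=\frac{s}{r_2-r_1}\big(\fiint(\delta+\Psi)\big)^{\frac{1}{m+1}}$ and $\mu=r_2^{\frac1m}\big(\fiint(\delta+\Psi)\big)^{\frac{1}{m+1}}$, which is where the powers $(s/(r_2-r_1))^m$, $s((r_2-r_1)/s)^m$, $r_2$, $r_2^{1/m}$ come from; they are not two H\"older splits of a single remainder, and your scheme has no parameter left to generate them once $\delta$ has been divided out.

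Two further inaccuracies, secondary but worth fixing: the quantity $\partial_t\psi^{1-m}$ enters the paper's proof only through the comparison of the fixed-time slice $\int_{B_{r_2}}\psi^{1-m}(\cdot,t_1)\,\mathrm{d}x$ with its time average over $Q_{r_2,s}(z_0)$ (the term $L_3$), not through controlling the time derivative of the correction $\|\psi^m-[\![\psi^m]\!]_h\|_{L^\infty}$; and only local \emph{integrability} of $\Psi$ and $\partial_t\psi^{1-m}$ is needed for Lemma \ref{gluing lemma} (the boundedness \eqref{lipschitz} is used only in Corollary \ref{gluing corollary}). Also, perturbing by $\delta\xi$ while taking $\eta=\xi$ yields $\xi^2$-weighted averages rather than the $\xi$-weighted averages $(u(t))^\xi_{B_{r_2}(x_0)}$ in the statement; the paper avoids this by shifting by a constant ($\mu^m$, or $1$ in the easy case $(u(t_1))^\xi\geq(u(t_2))^\xi$) rather than by a multiple of the cutoff.
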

\begin{proof} Our proof is in the spirit of \cite[Lemma 3.2, Lemma 4.1]{CS19}. Without loss of generality, we may assume that $t_1<t_2$.
In the variational inequality \eqref{weak identity} we choose $\eta=\xi$ as a cut-off function in space and, motivated by the proof of
of \cite[Lemma 3.2]{CS19}, we choose
\begin{equation*}
	\alpha(t)=\begin{cases}
0,&\quad \text{for}\quad t\in (0,t_1-\epsilon),\\
	1+\frac{1}{\epsilon}(t-t_1),&\quad \text{for}\quad t\in [t_1-\epsilon,t_1),
\\
	1,&\quad \text{for}\quad t\in [t_1,t_2],\\
	1-\frac{1}{\epsilon}(t-t_2),&\quad \text{for}\quad t\in (t_2,t_2+\epsilon],
\\ 0,&\quad \text{for}\quad t\in (t_2+\epsilon,T),
	\end{cases}
\end{equation*}
as a cut-off function in time, where $0<\epsilon\ll 1$. Next, we distinguish between the cases $(u(t_1))^\xi_{B_{r_2}(x_0)}\geq (u(t_2))^\xi_{B_{r_2}(x_0)}$ and $(u(t_1))^\xi_{B_{r_2}(x_0)}<(u(t_2))^\xi_{B_{r_2}(x_0)}$. In the first case, the argument in
\cite[page 19]{CS19} actually shows that
\begin{equation*}\begin{split}|(u(t_1))^\xi_{B_{r_2}(x_0)}-(u(t_2))^\xi_{B_{r_2}(x_0)}|
=(u(t_1))^\xi_{B_{r_2}(x_0)}-(u(t_2))^\xi_{B_{r_2}(x_0)}
&\leq \frac{\gamma s}{r_2-r_1}\ \fiint_{Q_{r_2,s}(z_0)}|Du^m|\,\mathrm {d}x\mathrm{d}t
\end{split}\end{equation*}
by choosing $v^m=\max\{[\![u^m]\!]_h+1,\psi^m\}$ as a comparison map in \eqref{weak identity}.

It suffices to prove the lemma in the case
$(u(t_1))^\xi_{B_{r_2}(x_0)}<(u(t_2))^\xi_{B_{r_2}(x_0)}$. Let $\mu$ be a fixed positive constant, which will be determined later.
We follow the argument in \cite[page 14-15]{CS19} to choose
\begin{equation*}v^m=\max\{[\![u^m]\!]_h-\mu^m,\psi^m\}\end{equation*}
as a comparison map in \eqref{weak identity} and deduce
\begin{equation*}\begin{split}\label{gluing time}
\,\langle \!\langle\partial_t u,\alpha\eta (v^m-u^m)\rangle \!\rangle&\leq I_h+
\mu^m\iint_{\Omega_T}\xi \alpha^\prime u\,\mathrm {d}x\mathrm{d}t+L,\end{split}\end{equation*}
where
we abbreviated
\begin{equation*}\begin{split}
L=&-\iint_{\Omega_T}\xi\alpha^\prime u(\psi^m+\mu^m-[\![u^m]\!]_h)_+\,\mathrm {d}x\mathrm{d}t
\\&+\iint_{\Omega_T\cap\{[\![u^m]\!]_h\leq \psi^m+\mu^m\}}\xi\alpha [\![u^m]\!]_h^{\frac{1}{m}}(\partial_t[\![u^m]\!]_h-\partial_t
\psi^m)\,\mathrm {d}x\mathrm{d}t
\end{split}\end{equation*}
and the term $I_h$ tends to zero as $h\downarrow 0$. To estimate $L$, we use integration by parts to obtain
\begin{equation*}\begin{split}
L&=-\iint_{\Omega_T}\xi\alpha^\prime u(\psi^m+\mu^m-[\![u^m]\!]_h)_+\,\mathrm {d}x\mathrm{d}t
\\&\quad-\iint_{\Omega_T}\xi\alpha [\![u^m]\!]_h^{\frac{1}{m}}\partial_t(\psi^m+\mu^m-[\![u^m]\!]_h)_+\,\mathrm {d}x\mathrm{d}t
\\&=\iint_{\Omega_T}\xi\alpha^\prime ([\![u^m]\!]_h^{\frac{1}{m}}-u)(\psi^m+\mu^m-[\![u^m]\!]_h)_+\,\mathrm {d}x\mathrm{d}t
\\&\quad+\iint_{\Omega_T}\xi\alpha \partial_t\big([\![u^m]\!]_h^{\frac{1}{m}}\big)(\psi^m+\mu^m-[\![u^m]\!]_h)_+\,\mathrm {d}x\mathrm{d}t
\\&=:L_1+L_2,
\end{split}\end{equation*}
with the obvious meaning of $L_1$ and $L_2$. By Lebesgue's dominated
convergence theorem, we see that $L_1$ tends to zero as $h\downarrow 0$. Next, we consider the estimate for $L_2$.
Noting that
\begin{equation*}\begin{split}
\frac{\partial}{\partial t}&\big[\int_{[\![u^m]\!]_h^{\frac{1}{m}}}^{(\mu^m+\psi^m)^{\frac{1}{m}}}(\mu^m+\psi^m-y^m)_+\,\mathrm {d}y\big]
\\&=
-\partial_t\big([\![u^m]\!]_h^{\frac{1}{m}}\big)(\mu^m+\psi^m-[\![u^m]\!]_h)_++
\partial_t \psi^m((\mu^m+\psi^m)^{\frac{1}{m}}-[\![u^m]\!]_h^{\frac{1}{m}})_+,
\end{split}\end{equation*}
we use integration by parts to obtain
\begin{equation*}\begin{split}
L_2=&\iint_{\Omega_T}\xi\alpha^\prime\int_{[\![u^m]\!]_h^{\frac{1}{m}}}^{(\mu^m+\psi^m)^{\frac{1}{m}}}(\mu^m+\psi^m-y^m)_+\,\mathrm {d}y
\,\mathrm {d}x\mathrm{d}t
\\&\quad
+\iint_{\Omega_T}\xi\alpha\partial_t \psi^m((\mu^m+\psi^m)^{\frac{1}{m}}-[\![u^m]\!]_h^{\frac{1}{m}})_+\,\mathrm {d}x\mathrm{d}t
\\=&:L_3+L_4,
\end{split}\end{equation*}
with the obvious meaning of $L_3$ and $L_4$. From Lemma \ref{auxiliary}, there exists a constant $\gamma=\gamma(m)$ such that
\begin{equation*}\begin{split}\lim\sup_{\epsilon\downarrow 0}\lim\sup_{h\downarrow 0} L_3&\leq
\int_{B_{r_2}(x_0)} ((\mu^m+\psi^m)^{\frac{1}{m}}-u)_+(x,t_1)\,(\mu^m+\psi^m-u^m)_+(x,t_1)\,\mathrm {d}x
\\&\leq \gamma \mu^{2m}\int_{B_{r_2}(x_0)}\psi(x,t_1)^{1-m}\mathrm {d}x+\gamma \mu^{m+1},
\end{split}\end{equation*}
since
\begin{equation}\begin{split}\label{mu u}((\mu^m+\psi^m)^{\frac{1}{m}}-u)_+&\leq (\mu^m+\psi^m+u^m)^{\frac{1}{m}-1}
(\mu^m+\psi^m-u^m)_+
\\&\leq \gamma \mu^m(\mu^{1-m}+\psi^{1-m}).
\end{split}\end{equation}
Moreover, we note that
\begin{equation*}\begin{split}\int_{B_{r_2}(x_0)}\psi(x,t_1)^{1-m}\mathrm {d}x&=
\int_{B_{r_2}(x_0)}\psi(x,t_1)^{1-m}\mathrm {d}x-\dashint_{\Lambda_{s}(t_0)}\int_{B_{r_2}(x_0)}\psi(x,t)^{1-m}\,\mathrm {d}x\mathrm {d}t
\\& \quad +\dashint_{\Lambda_{s}(t_0)}\int_{B_{r_2}(x_0)}\psi(x,t)^{1-m}\,\mathrm {d}x\mathrm {d}t
\\&=\dashint_{\Lambda_{s}(t_0)}\int_{B_{r_2}(x_0)}\int_t^{t_1}\partial_\tau[\psi(x,\tau)^{1-m}]\,\mathrm {d}\tau
\mathrm {d}x\mathrm {d}t
\\& \quad +|B_{r_2}(x_0)|\ \fiint_{Q_{r_2,s}(z_0)}\psi^{1-m}\,\mathrm {d}x\mathrm {d}t
\\&\leq \iint_{Q_{r_2,s}(z_0)}|\partial_t \psi^{1-m}|\,\mathrm {d}x\mathrm {d}t+
|B_{r_2}(x_0)|\ \fiint_{Q_{r_2,s}(z_0)}\psi^{1-m}\,\mathrm {d}x\mathrm {d}t
\end{split}\end{equation*}
and this implies
\begin{equation*}\begin{split}\lim\sup_{\epsilon\downarrow 0}\lim\sup_{h\downarrow 0} L_3&\leq
\gamma \mu^{2m}\iint_{Q_{r_2,s}(z_0)}|\partial_t \psi^{1-m}|\,\mathrm {d}x\mathrm {d}t\\&+\gamma \mu^{2m}
|B_{r_2}(x_0)|\ \fiint_{Q_{r_2,s}(z_0)}\psi^{1-m}\,\mathrm {d}x\mathrm {d}t+\gamma \mu^{m+1}.
\end{split}\end{equation*}
Next, we consider the estimate for $L_4$.
From \eqref{mu u}, we deduce
\begin{equation*}\begin{split}\lim\sup_{\epsilon\downarrow 0}\lim\sup_{h\downarrow 0} L_4&\leq
\iint_{Q_{r_2,s}(z_0)}\mu |\partial_t \psi^m|\,\mathrm {d}x\mathrm{d}t+
\iint_{Q_{r_2,s}(z_0)}\mu^{m}\psi^{1-m}|\partial_t \psi^m|\,\mathrm {d}x\mathrm{d}t
\\&= \iint_{Q_{r_2,s}(z_0)}\mu |\partial_t \psi^m|\,\mathrm {d}x\mathrm{d}t+
2s\mu^{m}|B_{r_2}(x_0)|\ \fiint_{Q_{r_2,s}(z_0)}\psi^{1-m}|\partial_t \psi^m|\,\mathrm {d}x\mathrm{d}t.
\end{split}\end{equation*}
By H\"older's inequality, we obtain
\begin{equation*}\begin{split}\lim\sup_{\epsilon\downarrow 0}\lim\sup_{h\downarrow 0} L_4
&\leq \iint_{Q_{r_2,s}(z_0)}\mu |\partial_t \psi^m|\,\mathrm {d}x\mathrm{d}t\\&\quad+
2s\mu^{m}|B_{r_2}(x_0)|
\left(\ \fiint_{Q_{r_2,s}(z_0)}\psi\,\mathrm {d}x\mathrm{d}t\right)^{1-m}
\left(\ \fiint_{Q_{r_2,s}(z_0)}|\partial_t \psi^m|^{\frac{1}{m}}\,\mathrm {d}x\mathrm{d}t\right)^{m}
\\&\leq \iint_{Q_{r_2,s}(z_0)}\mu |\partial_t \psi^m|\,\mathrm {d}x\mathrm{d}t+
2s\mu^{m}|B_{r_2}(x_0)|
\left(\ \fiint_{Q_{r_2,s}(z_0)}\Psi\,\mathrm {d}x\mathrm{d}t\right)^{\frac{1}{m+1}}.
\end{split}\end{equation*}
To estimate the diffusion term, we infer from the argument in \cite[page 17]{CS19} that
\begin{equation*}\begin{split}
\lim&\sup_{\epsilon\downarrow 0}\lim\sup_{h\downarrow 0}
\iint_{\Omega_T}\alpha A(x,t,u,Du^m)\cdot D(\eta(v^m-u^m))\,\mathrm {d}x\mathrm{d}t\\&\leq
\gamma \mu^m\frac{1}{r_2-r_1}\ \iint_{Q_{r_2,s}(z_0)}|Du^m|\,\mathrm {d}x\mathrm{d}t+
\gamma \ \iint_{Q_{r_2,s}(z_0)}|D\psi^m|^2\,\mathrm {d}x\mathrm{d}t.
\end{split}\end{equation*}
Combining the estimates above, we conclude that
\begin{equation*}\begin{split}&(u(t_2))^\xi_{B_{r_2}(x_0)}-(u(t_1))^\xi_{B_{r_2}(x_0)}\\&\leq
\gamma \mu
+\gamma \mu^{m}
\ \fiint_{Q_{r_2,s}(z_0)}\psi^{1-m}\,\mathrm {d}x\mathrm {d}t
+\gamma s\mu^{m}\ \fiint_{Q_{r_2,s}(z_0)}|\partial_t \psi^{1-m}|\,\mathrm {d}x\mathrm {d}t
 \\&\quad+2s\ \fiint_{Q_{r_2,s}(z_0)}\mu^{1-m} |\partial_t \psi^m|\,\mathrm {d}x\mathrm{d}t+
2s\left(\ \fiint_{Q_{r_2,s}(z_0)}\Psi\,\mathrm {d}x\mathrm{d}t\right)^{\frac{1}{m+1}}
\\&\qquad+\gamma \frac{s}{r_2-r_1}\ \fiint_{Q_{r_2,s}(z_0)}|Du^m|\,\mathrm {d}x\mathrm{d}t
+\gamma \frac{s}{\mu^m}\ \fiint_{Q_{r_2,s}(z_0)}|D\psi^m|^2\,\mathrm {d}x\mathrm{d}t,
\end{split}\end{equation*}
where the constant $\gamma$ depends only upon $\nu_0$, $\nu_1$ and $m$. Applying Young's inequality,
we estimate the third and fourth terms on the right-hand side as follows:
\begin{equation*}\begin{split}
s\mu^{m}\ \fiint_{Q_{r_2,s}(z_0)}&|\partial_t \psi^{1-m}|\,\mathrm {d}x\mathrm {d}t+
2s\ \fiint_{Q_{r_2,s}(z_0)}\mu^{1-m} |\partial_t \psi^m|\,\mathrm {d}x\mathrm{d}t
\\&\leq 2s\mu+2s\left(\fiint_{Q_{r_2,s}(z_0)}|\partial_t \psi^{1-m}|\,\mathrm {d}x\mathrm {d}t\right)^{\frac{1}{1-m}}
+2s\ \fiint_{Q_{r_2,s}(z_0)}|\partial_t \psi^m|^{\frac{1}{m}}\,\mathrm {d}x\mathrm{d}t
\\&\leq 2\mu+2s\left(\fiint_{Q_{r_2,s}(z_0)}|\partial_t \psi^{1-m}|\,\mathrm {d}x\mathrm {d}t\right)^{\frac{1}{1-m}}
+2s\left(\ \fiint_{Q_{r_2,s}(z_0)}|\partial_t \psi^m|^{\frac{m+1}{m}}\,\mathrm {d}x\mathrm{d}t\right)^{\frac{1}{m+1}},
\end{split}\end{equation*}
where we have used H\"older's inequality for the last estimate. This implies that the following inequality
\begin{equation}\begin{split}\label{mumu}(u(t_2))^\xi_{B_{r_2}(x_0)}&-(u(t_1))^\xi_{B_{r_2}(x_0)}\leq
\gamma \mu
+\gamma \mu^{m}
\ \fiint_{Q_{r_2,s}(z_0)}\psi^{1-m}\,\mathrm {d}x\mathrm {d}t
 \\&+2s\left(\ \fiint_{Q_{r_2,s}(z_0)}|\partial_t \psi^{1-m}|\,\mathrm {d}x\mathrm {d}t\right)^{\frac{1}{1-m}}
+2s\left(\ \fiint_{Q_{r_2,s}(z_0)}\Psi\,\mathrm {d}x\mathrm{d}t\right)^{\frac{1}{m+1}}
\\&+\gamma \frac{s}{r_2-r_1}\ \fiint_{Q_{r_2,s}(z_0)}|Du^m|\,\mathrm {d}x\mathrm{d}t
+\gamma \frac{s}{\mu^m}\ \fiint_{Q_{r_2,s}(z_0)}|D\psi^m|^2\,\mathrm {d}x\mathrm{d}t
\end{split}\end{equation}
holds for any $\mu>0$.
At this stage, we set $0<\delta\ll 1$. In the estimate \eqref{mumu} we choose
\begin{equation*}\begin{split}\mu=\frac{s}{r_2-r_1}\left(\ \fiint_{Q_{r_2,s}(z_0)}\left(\delta+\psi^{m+1}+
|\partial_t \psi^m|^{\frac{m+1}{m}}+|D\psi^m|^2\right)
\,\mathrm {d}x\mathrm{d}t\right)^{\frac{1}{m+1}}.
\end{split}\end{equation*}
This concludes the estimate \eqref{gluing 1} by passing to the limit $\delta\downarrow 0$.
Finally, if we choose
\begin{equation*}\begin{split}\mu=r_2^{\frac{1}{m}}\left(\ \fiint_{Q_{r_2,s}(z_0)}\left(\delta+\psi^{m+1}+
|\partial_t \psi^m|^{\frac{m+1}{m}}+|D\psi^m|^2\right)
\,\mathrm {d}x\mathrm{d}t\right)^{\frac{1}{m+1}},
\end{split}\end{equation*}
then the desired estimate \eqref{gluing 2} follows by passing to the limit $\delta\downarrow 0$. This finishes the proof of the lemma.
\end{proof}
Moreover, if $\psi^m$ is locally Lipschitz continuous and $\partial_t\psi^{1-m}$ is locally bounded, then we can rewrite the estimates \eqref{gluing 1} and \eqref{gluing 2} in the following ready-to-use form.
\begin{corollary}\label{gluing corollary} Suppose that
\begin{equation*}\begin{split}
\sup_{Q_{r_2,s}(z_0)}(\ \Psi^{\frac{1}{m+1}}+|\partial_t\psi^{1-m}|^{\frac{1}{1-m}}\ )\leq M_0
\end{split}\end{equation*}
for some $M_0>1$.
Then for any $t_1$, $t_2\in\Lambda_s(t_0)$, there holds
\begin{equation}\begin{split}\label{gluing 3}|(u(t_1))^\xi_{B_{r_2}(x_0)}-&(u(t_2))^\xi_{B_{r_2}(x_0)}|
\leq \gamma\left(\frac{s}{r_2-r_1}\right)\left(\ \fiint_{Q_{r_2,s}(z_0)}|Du^m|\,\mathrm {d}x\mathrm{d}t+M_0\right)
\\&+\gamma \left(\frac{s}{r_2-r_1}\right)^m\left(\ \fiint_{Q_{r_2,s}(z_0)}\psi^{1-m}\,\mathrm {d}x\mathrm{d}t\right)
M_0^m
+\gamma
 s\left(\frac{r_2-r_1}{s}\right)^mM_0
\end{split}\end{equation}
and
\begin{equation}\begin{split}\label{gluing 4}|(u(t_1))^\xi_{B_{r_2}(x_0)}-(u(t_2))^\xi_{B_{r_2}(x_0)}|&
\leq \gamma\left(\frac{s}{r_2-r_1}\right)\left(\ \fiint_{Q_{r_2,s}(z_0)}|Du^m|\,\mathrm {d}x\mathrm{d}t+M_0\right)
\\&+\gamma r_2\left(\ \fiint_{Q_{r_2,s}(z_0)}\psi^{1-m}\,\mathrm {d}x\mathrm{d}t\right)M_0^m
+\gamma r_2^{\frac{1}{m}}M_0,
\end{split}\end{equation}
where the constant $\gamma$ depends only on $\nu_0$, $\nu_1$ and $m$.
\end{corollary}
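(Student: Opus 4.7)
The corollary is essentially a direct specialization of Lemma \ref{gluing lemma} under the pointwise sup bound on $\Psi^{1/(m+1)} + |\partial_t \psi^{1-m}|^{1/(1-m)}$. The plan is to translate the $L^\infty$ hypothesis into bounds on the integral averages that appear on the right-hand sides of \eqref{gluing 1} and \eqref{gluing 2}, and then simply substitute.

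First I would observe that the hypothesis gives the pointwise bounds $\Psi \leq M_0^{m+1}$ and $|\partial_t \psi^{1-m}| \leq M_0^{1-m}$ on $Q_{r_2,s}(z_0)$. Taking averages and raising to the appropriate fractional powers, this yields
\begin{equation*}
\left(\fiint_{Q_{r_2,s}(z_0)} \Psi \,\mathrm{d}x\mathrm{d}t\right)^{\frac{1}{m+1}} \leq M_0, \qquad \left(\fiint_{Q_{r_2,s}(z_0)} |\partial_t \psi^{1-m}| \,\mathrm{d}x\mathrm{d}t\right)^{\frac{1}{1-m}} \leq M_0,
\end{equation*}
so that the quantity $\Psi_1$ defined in Lemma \ref{gluing lemma} satisfies $\Psi_1 \leq 2M_0$. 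An analogous computation gives $(\fiint_{Q_{r_2,s}(z_0)} \Psi\,\mathrm{d}x\mathrm{d}t)^{m/(m+1)} \leq M_0^m$.

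Next I would substitute these three bounds directly into \eqref{gluing 1}. The first two terms collapse into $\gamma(s/(r_2-r_1))(\fiint |Du^m|\,\mathrm{d}x\mathrm{d}t + M_0)$ after absorbing the factor $2$ into $\gamma$, the third term becomes $\gamma(s/(r_2-r_1))^m(\fiint \psi^{1-m}\,\mathrm{d}x\mathrm{d}t)M_0^m$, and the fourth becomes $\gamma s((r_2-r_1)/s)^m M_0$. This is precisely \eqref{gluing 3}. The same substitution applied to \eqref{gluing 2} produces \eqref{gluing 4} with no additional work.

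The argument involves no genuine difficulty; it is simply a bookkeeping step to pass from the integral-form estimates of Lemma \ref{gluing lemma} to a ready-to-use $L^\infty$-form that can be inserted cleanly into the reverse H\"older estimates of \S 6 and \S 7. The normalization $M_0 > 1$ is not actually needed for the substitution itself, but it ensures that lower powers of $M_0$ (which would otherwise appear through the $\Psi_1$ term) can be absorbed without loss into the single constant $M_0$.
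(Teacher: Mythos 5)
Your proposal is correct and matches the paper's intent exactly: the paper states that the corollary ``is a direct consequence of Lemma \ref{gluing lemma}'' and omits the proof, and your argument is precisely that omitted substitution, converting the sup bound into the bounds $\Psi_1\leq 2M_0$, $\left(\fiint_{Q_{r_2,s}(z_0)}\Psi\,\mathrm{d}x\mathrm{d}t\right)^{\frac{m}{m+1}}\leq M_0^m$ and $\left(\fiint_{Q_{r_2,s}(z_0)}\Psi\,\mathrm{d}x\mathrm{d}t\right)^{\frac{1}{m+1}}\leq M_0$, then inserting them into \eqref{gluing 1} and \eqref{gluing 2}. No gaps.
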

This corollary is a direct consequence of Lemma \ref{gluing lemma} and the proof is omitted.
\section{Reverse H\"older-type inequalities}
The proof of the reverse H\"older inequalities on intrinsic cylinders follows from
the analysis of two complementary cases. Following \cite{GS}, we give the definitions of degenerate and non-degenerate regimes.
\begin{definition}\cite{GS}
Fix a point $z_0\in \Omega_T$ and suppose that $Q_{R,R^2}(z_0)\subset\Omega_T$. Let $\epsilon>0$ be a fixed number and
let $Q_s(z_0)$ be an intrinsic cylinder constructed in \S 3. We call a cylinder $Q_s(z_0)$ degenerate if and only if
\begin{equation}\begin{split}\label{degenerate}\left(\ \fiint_{Q_s(z_0)}\big|u^m-(u^m)_{Q_s(z_0)}\big|^{\frac{m+1}{m}}
\,\mathrm{d}x\mathrm{d}t\right)^{\frac{1}{m+1}}\geq \epsilon
\left(\ \fiint_{Q_s(z_0)}u^{m+1}\,\mathrm{d}x\mathrm{d}t\right)^{\frac{1}{m+1}}
\end{split}\end{equation}
holds true. Moreover, we call a cylinder $Q_s(z_0)$ non-degenerate if and only if the following inequality holds:
\begin{equation}\begin{split}\label{non degenerate}\left(\ \fiint_{Q_s(z_0)}\big|u^m-(u^m)_{Q_s(z_0)}\big|^{\frac{m+1}{m}}
\,\mathrm{d}x\mathrm{d}t\right)^{\frac{1}{m+1}}\leq \epsilon
\left(\ \fiint_{Q_s(z_0)}u^{m+1}\,\mathrm{d}x\mathrm{d}t\right)^{\frac{1}{m+1}}.
\end{split}\end{equation}
\end{definition}
Next, we consider separately the degenerate and non-degenerate case.
\subsection{The degenerate alternative}
This subsection deals with the degenerate case. We first establish a boundedness result analogue to \cite[Proposition 5.2]{GS}.
The local boundedness for weak solutions to the singular parabolic obstacle problems was first proved by Cho and Scheven \cite{CS18}.
Here, we present a mean value type estimate and our proof is in the spirit of \cite[Proposition 5.2]{GS}.
\begin{lemma}\label{sup}Let $u$ be a nonnegative weak solution to the obstacle problem in the sense of Definition \ref{definition}.
Fix a point $z_0\in \Omega_T$ and suppose that $Q_{R,R^2}(z_0)\subset\Omega_T$.
Let $0<s\leq \frac{1}{2}R^2$ and $r(2s)$ makes sense.
Assume that the cylinder $Q_{s}(z_0)$ is intrinsic and
\begin{equation}\begin{split}\label{psi theta}\sup_{Q_{2s}(z_0)}\Psi\leq
\frac{\theta_{s}(z_0)^{\frac{m+1}{1-m}}}{s}.\end{split}\end{equation}
Then there exists a constant $\gamma=\gamma(n,m,\nu_0,\nu_1)$ such that
\begin{equation}\begin{split}\label{boundedness}
\sup_{Q_{s}(z_0)}u\leq \gamma\left(\ \fiint_{Q_{2s}(z_0)}u^{m+1}\,\mathrm {d}x\mathrm{d}t\right)^{\frac{1}{m+1}}.
\end{split}\end{equation}
\end{lemma}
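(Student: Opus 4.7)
The plan is to run a De~Giorgi level--set iteration on intrinsic scales, closely following \cite[Proposition~5.2]{GS}, with an additional absorption step to handle the obstacle contribution in the Caccioppoli inequality. Write $\theta=\theta_s(z_0)$ and $r=r(s)$, so that $s=\theta r^2$, $|Q_s(z_0)|\asymp\theta r^{n+2}$, and the intrinsic hypothesis reads $\fiint_{Q_s(z_0)}u^{m+1}\,\mathrm{d}x\mathrm{d}t\asymp\theta^{(m+1)/(1-m)}$. Fix $k>0$ to be chosen later, and set $k_j=k(1-2^{-j-1})$; choose radii $\rho_j$ decreasing from some $\rho_0$ with $Q_{\rho_0,\theta\rho_0^2}(z_0)\subset Q_{2s}(z_0)$ down to $\rho_\infty=r$, with $\rho_j-\rho_{j+1}\asymp 2^{-j}r$. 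Put $Q^{(j)}=Q_{\rho_j,\theta\rho_j^2}(z_0)$ and
\[
Y_j=\fiint_{Q^{(j)}}(u-k_j)_+^{m+1}\,\mathrm{d}x\mathrm{d}t.
\]

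The first step applies the energy estimate \eqref{Cac1} with $c=k_{j+1}$ on the pair $Q^{(j)}\supset Q^{(j+1)}$, using a smooth cut-off $\phi$ with $\phi\equiv 1$ on $Q^{(j+1)}$ and $|D\phi|\lesssim 2^j/r$; the corresponding time-widths differ by $\gtrsim 2^{-j}s$. By Lemma~\ref{auxiliary} one has $(u-k_{j+1})_+(u^m-k_{j+1}^m)_+\ge c_m(u-k_{j+1})_+^{m+1}$ with $c_m>0$ depending only on $m$, so the left-hand side of \eqref{Cac1} controls both $\sup_t\int_{B_{\rho_j}}\phi^2(u-k_{j+1})_+^{m+1}$ and $\iint|D[(u^m-k_{j+1}^m)_+\phi]|^2$. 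Feeding these into the parabolic Sobolev inequality for $(u^m-k_{j+1}^m)_+\phi$ yields the standard $L^{m+1}\to L^{(m+1)(1+2/n)}$ gain on $Q^{(j+1)}$.

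The genuinely new point is the obstacle term
\[
I_j=\iint_{Q^{(j)}}\Psi\,\chi_{\{u>k_{j+1}\}}\,\mathrm{d}x\mathrm{d}t.
\]
By \eqref{psi theta} one has $\Psi\le\theta^{(m+1)/(1-m)}/s$ on $Q^{(j)}\subset Q_{2s}(z_0)$, and Chebyshev gives
\[
|\{u>k_{j+1}\}\cap Q^{(j)}|\le\frac{2^{(j+1)(m+1)}}{k^{m+1}}\iint_{Q^{(j)}}(u-k_j)_+^{m+1}\,\mathrm{d}x\mathrm{d}t.
\]
Since $s\asymp\theta\rho_j^2$, after normalising by $|Q^{(j)}|$ the contribution $I_j/|Q^{(j)}|$ becomes $C\,4^j\bigl(\theta^{(m+1)/(1-m)}/k^{m+1}\bigr)Y_j$, which carries the same $4^j$ growth and linear $Y_j$ dependence as the cut-off term and is therefore absorbable. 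Collecting everything produces a recursion
\[
Y_{j+1}\le C\,b^{j}\Bigl(1+\theta^{(m+1)/(1-m)}k^{-(m+1)}\Bigr)^{\sigma}Y_j^{1+\delta}
\]
with $C,b,\sigma,\delta$ depending only on $n,m,\nu_0,\nu_1$.

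The standard fast-geometric-convergence lemma then forces $Y_j\to 0$, i.e.\ $\sup_{Q_s(z_0)}u\le k$, provided $Y_0$ is small enough relative to the correction factor. Since $Y_0\le C\fiint_{Q_{2s}(z_0)}u^{m+1}\,\mathrm{d}x\mathrm{d}t$ and since Lemma~\ref{subcylinder} combined with the intrinsic property of $Q_s(z_0)$ gives $\theta^{(m+1)/(1-m)}\le C'\fiint_{Q_{2s}(z_0)}u^{m+1}\,\mathrm{d}x\mathrm{d}t$, choosing $k=\gamma\bigl(\fiint_{Q_{2s}(z_0)}u^{m+1}\,\mathrm{d}x\mathrm{d}t\bigr)^{1/(m+1)}$ with $\gamma=\gamma(n,m,\nu_0,\nu_1)$ large enough makes $Y_0$ admissibly small and delivers \eqref{boundedness}. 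The principal difficulty is precisely the obstacle absorption step: condition \eqref{psi theta} is calibrated so that $\Psi$ scales exactly like $\theta^{(m+1)/(1-m)}/s$, which is the unique intrinsic speed compatible with the porous-medium scaling at scale $s$, and this matching is what allows $I_j$ to be absorbed on the same footing as the cut-off term.
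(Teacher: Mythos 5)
Your overall strategy (De Giorgi iteration on intrinsic cylinders, absorption of the obstacle term through \eqref{psi theta} and Chebyshev, and the final choice $k=\gamma\big(\fiint_{Q_{2s}(z_0)}u^{m+1}\,\mathrm{d}x\mathrm{d}t\big)^{1/(m+1)}$ made admissible by the intrinsic property, which is what guarantees $k\gtrsim\theta_{2s}^{1/(1-m)}$) is exactly the scheme of the paper. However, there is a genuine gap at the coercivity step that your iteration hinges on. You claim, invoking Lemma \ref{auxiliary}, that
\begin{equation*}
(u-k_{j+1})_+(u^m-k_{j+1}^m)_+\;\geq\; c_m\,(u-k_{j+1})_+^{m+1}.
\end{equation*}
This inequality is false in the singular range $0<m<1$: for fixed $c=k_{j+1}>0$ and $u\downarrow c$ one has $(u-c)(u^m-c^m)\sim m\,c^{m-1}(u-c)^2$, while $(u-c)^{m+1}$ is much larger since $m+1<2$; Lemma \ref{auxiliary} only compares $\int_c^u(y^m-c^m)\,\mathrm{d}y$ with $(u-c)(u^m-c^m)$ and gives nothing of the kind. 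The valid pointwise bound goes the other way, $u^m-c^m\le (u-c)^m$, whose useful consequence is
\begin{equation*}
(u-c)_+(u^m-c^m)_+\;\geq\;\big[(u^m-c^m)_+\big]^{\frac{m+1}{m}}.
\end{equation*}
Consequently the sup term on the left of \eqref{Cac1} does \emph{not} control $\sup_t\int\phi^2(u-k_{j+1})_+^{m+1}\,\mathrm{d}x$, and your recursion for $Y_j=\fiint_{Q^{(j)}}(u-k_j)_+^{m+1}\,\mathrm{d}x\mathrm{d}t$ does not close as written: the parabolic Sobolev step must be fed the sup of $\int\big[(u^m-k_{j+1}^m)_+\phi\big]^{\frac{m+1}{m}}\,\mathrm{d}x$ together with $\iint|D[(u^m-k_{j+1}^m)_+\phi]|^2$, and what comes out is a gain for powers of $(u^m-k^m)_+$, not for $(u-k)_+^{m+1}$.

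This is precisely why the paper runs the iteration on truncation levels of $u^m$, i.e.\ $k_j^m=k^m-2^{-j}k^m$ and
\begin{equation*}
Y_j=\fiint_{Q_j}(u^m-k_j^m)_+^{\frac{m+1}{m}}\,\mathrm{d}x\mathrm{d}t,
\end{equation*}
for which both the sup term and the gradient term of \eqref{Cac1} are exactly the quantities needed in the Sobolev embedding; your obstacle-absorption computation survives unchanged, since $\big(\theta_{2s}/k^{1-m}\big)^{\frac{m+1}{1-m}}=\theta_{2s}^{\frac{m+1}{1-m}}/k^{m+1}$, which is the same smallness factor you produce. Your argument could in principle be repaired by converting between the two families of truncations (on $\{u>k_{j+1}\}$ one has $(u-k_{j+1})_+^{m+1}\le c\,2^{j\frac{m+1}{m}}(u^m-k_j^m)_+^{\frac{m+1}{m}}$, at the cost of extra geometric factors), but then one is effectively forced back to the paper's quantities, so the cleanest fix is simply to iterate on $(u^m-k_j^m)_+^{\frac{m+1}{m}}$ as in the paper. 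A minor additional point: your nested cylinders $Q_{\rho_j,\theta\rho_j^2}(z_0)$ with fixed $\theta=\theta_s(z_0)$ require $\rho_0\le r(2s,z_0)$, and Lemma \ref{subcylinder} (2) only gives $r(2s,z_0)\ge 2^{\hat b}r(s,z_0)$ with $\hat b<\tfrac12$, so $\rho_0=\sqrt2\,r(s,z_0)$ is not admissible; one must take $\rho_0$ comparable to $r(s,z_0)$ with a constant depending on $\hat b$ (or use the paper's cylinders $Q_{r(s_j),s_j}(z_0)$, $s_j=s+2^{-j}s$), which is harmless but should be said.
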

\begin{proof}
There is no loss of generality in assuming $z_0=(x_0,t_0)=(0,0)$.
For $j=0,1,2,\cdots$, set $s_j=s+2^{-j}s$, $r_j=r(s_j)$,
$B_j=B_{r_j}$ and $Q_j=Q_{r_j,s_j}$. We define a sequence of numbers $k_j^m=k^m-2^{-j}k^m$,
where $k>0$ is to be determined.
Let $\zeta_j=\zeta_j(x)$ be a smooth function such that $\zeta_j\in C_0^\infty(B_j)$, $0\leq \zeta_j\leq1$, $\zeta_j\equiv 1$ in $B_{j+1}$ and $|D\zeta_j|\leq 2(r_j-r_{j+1})^{-1}$.
We now apply the Caccioppoli estimate \eqref{Cac1} with $(c,\,\phi,\,Q_{r_1,s_1},\,Q_{r_2,s_2})$
replaced by $(k_{j+1},\,\zeta_j,\,Q_{j+1},\,Q_j)$ to obtain
\begin{equation*}\begin{split}\esssup_{-t_{j+1}<t<t_{j+1}}&\int_{B_j}[(u^m-k_{j+1}^m)_+\zeta_j]^{\frac{m+1}{m}}(x,t)\,
\mathrm{d}x+\int_{-t_{j+1}}^{t_{j+1}}\int_{B_j}|D[(u^m-k_{j+1}^m)_+\zeta_j]|^2\,\mathrm{d}x\mathrm{d}t
\\&\leq \frac{\gamma}{s_{j}-s_{j+1}}\iint_{Q_j}u^{m+1}\chi_{\{u>k_{j+1}\}}\,
\mathrm{d}x\mathrm{d}t\\&\quad+\frac{\gamma}{(r_j-r_{j+1})^2}\iint_{Q_j}(u^m-k_{j+1}^m)_+^2\,\mathrm{d}x\mathrm{d}t
+\gamma\iint_{Q_j}\Psi\chi_{\{u>k_{j+1}\}}\,
\mathrm{d}x\mathrm{d}t.\end{split}\end{equation*}
We first observe from Lemma \ref{subcylinder} (5) that all the cylinders $Q_j$ are intrinsic. Moreover, from Lemma \ref{subcylinder} (4) and the assumption \eqref{psi theta}, we deduce
\begin{equation*}\begin{split}\iint_{Q_j}\Psi\chi_{\{u>k_{j+1}\}}\,
\mathrm{d}x\mathrm{d}t&\leq \sup_{Q_j}\Psi\
|\{u>k_{j+1}\}\cap Q_j|
\\ &\leq \gamma \frac{2^{j\frac{m+1}{m}}}{s}
\left(\frac{\theta_{2s}}{k^{1-m}}\right)^{\frac{m+1}{1-m}}
\iint_{Q_j}(u^m-k_j^m)_+^{\frac{m+1}{m}}\,
\mathrm{d}x\mathrm{d}t.
\end{split}\end{equation*}
Then, we follow the argument in \cite[page 33-34]{GS} to impose a condition $k\geq \theta_{2s}^{\frac{1}{1-m}}$
and obtain
\begin{equation*}\begin{split}\esssup_{-t_{j+1}<t<t_{j+1}}&\int_{B_j}[(u^m-k_{j+1}^m)_+\zeta_j]^{\frac{m+1}{m}}(x,t)\,
\mathrm{d}x+\int_{-t_{j+1}}^{t_{j+1}}\int_{B_j}|D[(u^m-k_{j+1}^m)_+\zeta_j]|^2\,\mathrm{d}x\mathrm{d}t
\\&\leq\frac{\gamma2^{j\frac{3(m+1)}{m}}}{s}\iint_{Q_j}(u^m-k_{j}^m)_+^{\frac{m+1}{m}}\,\mathrm{d}x\mathrm{d}t.
\end{split}\end{equation*}
Consequently, we can apply the parabolic Sobolev inequality to $(u^m-k_{j+1}^m)_+\zeta_j$ on the cylinder $B_{j}\times(-t_{j+1},t_{j+1})$, which gives
\begin{equation}\begin{split}\label{Yj}Y_{j+1}\leq \gamma 2^{bj}\left(\frac{|Q_j|^{\frac{2m+2}{nqm}}}{s^{\frac{2m+nm+n+2}{nqm}}k^{(m+1)(1-\frac{m+1}{qm})}}\right)Y_j^{1+\frac{2m+2}{nqm}},
\end{split}\end{equation}
where
$$b=2^{\frac{4(m+1)}{m}(2+\frac{2(m+1)}{nqm})},\qquad
q=2\frac{n+\frac{m+1}{m}}{n}\qquad\text{and}\qquad Y_j=\fiint_{Q_{j}}(u^m-k_{j}^m)_+^{\frac{m+1}{m}}\,\mathrm{d}x\mathrm{d}t.$$
For more details on the proof of \eqref{Yj}, we refer the reader to \cite[page 34]{GS}.
According to the argument in \cite[page 34]{GS}, we obtain $Y_j\to 0$ as $j\to\infty$, provided that
$$k=\gamma\left(\ \fiint_{Q_0}u^{m+1}\,\mathrm {d}x\mathrm{d}t\right)^{\frac{1}{m+1}},$$
where $\gamma>1$ depends only upon $n$, $\nu_0$, $\nu_1$ and $m$. This proves \eqref{boundedness} and the proof of Lemma \ref{sup} is complete.
\end{proof}
We remark that the intrinsic condition for $Q_s(z_0)$ is necessary in the proof of Lemma \ref{sup}. This restricts us to work with
the intrinsic cylinders in the degenerate regime. With the help of Lemma \ref{sup}, we can now establish the reverse H\"older inequality for the degenerate regime.
\begin{proposition}\label{degenerate regime}
Let $u$ be a nonnegative weak solution to the obstacle problem in the sense of Definition \ref{definition}.
Fix a point $z_0\in \Omega_T$ and suppose that $Q_{R,R^2}(z_0)\subset\Omega_T$.
Let $0<s\leq \frac{1}{3}R^2$ and $r(3s)$ makes sense.
Assume that the cylinder $Q_{s}(z_0)$ is intrinsic and satisfies \eqref{degenerate}. Moreover, assume that $\psi^m$ is locally Lipschitz
continuous and
\begin{equation*}\begin{split}
\sup_{Q_{3s}(z_0)}(\ \Psi^{\frac{1}{m+1}}+|\partial_t\psi^{1-m}|^{\frac{1}{1-m}}\ )\leq M_0,
\end{split}\end{equation*}
for some $M_0>0$. Then there exists $q_1\in(\frac{1}{2},1)$, depending only upon $n$ and $m$, such that the following holds:
\begin{equation}\begin{split}\label{degenerate result} \ \fiint_{Q_s(z_0)}|Du^m|^2
\,\mathrm{d}x\mathrm{d}t \leq c_\epsilon
\left(\ \fiint_{Q_{3s}(z_0)}|Du^m|^{2q_1}\,\mathrm{d}x\mathrm{d}t\right)^{\frac{1}{q_1}}
+c_\epsilon M_0^2+1.
\end{split}\end{equation}
\end{proposition}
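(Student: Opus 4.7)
The plan is to adapt the degenerate-case strategy of \cite{GS} to the obstacle setting. The main ingredients are the $L^\infty$--bound of Lemma~\ref{sup}, the Caccioppoli estimate \eqref{Cac2}, a parabolic Poincar\'e--Sobolev inequality of Gagliardo--Nirenberg type, the gluing Corollary~\ref{gluing corollary}, and the degeneracy condition \eqref{degenerate}.

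The first step is to secure a sup-bound on $u$. A preliminary case analysis compares the $\Psi$-bound supplied by $M_0$ with the one required by hypothesis \eqref{psi theta} of Lemma~\ref{sup}: if $\sup_{Q_{2s}}\Psi$ exceeds $\theta_s(z_0)^{(m+1)/(1-m)}/s$, then intrinsicity of $Q_s(z_0)$ combined with \eqref{Cac2} applied with $c=0$ on $Q_s(z_0)\subset Q_{2s}(z_0)$ (using Lemma~\ref{subcylinder}(4) to compare $r(s)$ and $r(2s)$ up to $n,m$-constants, and exploiting the a priori bound $u\leq 1$) already dominates $\fiint_{Q_s}|Du^m|^2$ by $C(M_0^2+1)$, making \eqref{degenerate result} trivial. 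Otherwise Lemma~\ref{sup} applies and yields $\sup_{Q_s(z_0)} u\leq C\,\theta_s(z_0)^{1/(1-m)}$.

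Assuming the sup-bound is in force, I would reapply \eqref{Cac2} with $c=0$ on $Q_s(z_0)\subset Q_{2s}(z_0)$: using the intrinsic identity $\fiint u^{m+1}\approx \theta_s(z_0)^{(m+1)/(1-m)}$ for the $u\cdot u^m$ term, the sup-bound $u^{2m}\leq C\theta_s^{2m/(1-m)}$ for the $|u^m|^2$ term, and the $M_0$ assumption for the $\Psi$ term yields
\[
\fiint_{Q_s(z_0)} |Du^m|^2\,\mathrm{d}x\mathrm{d}t \;\leq\; C\,\frac{\theta_s(z_0)^{2m/(1-m)}}{r(s)^2} + C(M_0^2+1).
\]
To convert the first summand into a reverse H\"older quantity, I would invoke a parabolic Gagliardo--Nirenberg inequality that estimates the $L^{(m+1)/m}$ oscillation of $u^m$ about a weighted time-slice spatial mean $(u^m(t_\star))^\xi_{B_{r(s)}}$ by $r(s)\,(\fiint_{Q_{3s}}|Du^m|^{2q_1})^{1/(2q_1)}$ interpolated with a power of $\sup u^m$, for a suitable $q_1\in(1/2,1)$ depending only on $n,m$. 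Corollary~\ref{gluing corollary} then bridges this time-slice mean and the genuine space-time mean $(u^m)_{Q_s(z_0)}$, contributing only $\fiint|Du^m|$-averages (absorbed via Young's inequality) and $M_0$-terms. Combining the resulting estimate with the degeneracy assumption $\theta_s^{(m+1)/(1-m)}\lesssim_\epsilon \fiint_{Q_s}|u^m-(u^m)_{Q_s}|^{(m+1)/m}$, and with $\sup u^m\leq C\theta_s^{m/(1-m)}$ from Step~1, allows one to solve for the offending factor $\theta_s^{2m/(1-m)}/r(s)^2$ and reach \eqref{degenerate result}.

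The main technical obstacle is the calibration of the Gagliardo--Nirenberg step: the exponent $q_1$ must be chosen close to but strictly below $1$ so that the spatial Sobolev embedding into $L^{(m+1)/m}$ works at the $W^{1,2q_1}$ level while simultaneously the parabolic interpolation weight on $\sup u^m$ is exactly compensated by the $\theta_s^{m/(1-m)}$ supplied by Lemma~\ref{sup}. In parallel one must verify that the gluing corrections from Corollary~\ref{gluing corollary} are of the same homogeneity as the terms already present on the right-hand side of \eqref{degenerate result}, so that they can be absorbed without destroying the self-improving structure.
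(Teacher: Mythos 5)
Your proposal follows essentially the same route as the paper's proof: the same preliminary dichotomy between $\sup_{Q_{2s}}\Psi$ and $\theta_s^{\frac{m+1}{1-m}}/s$ to either conclude trivially or activate Lemma \ref{sup}, the Caccioppoli estimate \eqref{Cac2} to reduce everything to bounding $\theta_s^{\frac{m+1}{1-m}}/s=\theta_s^{\frac{2m}{1-m}}/r(s)^2$, and then the slice-wise Sobolev inequality with $q_1\in(\tfrac12,1)$, Corollary \ref{gluing corollary}, and the degeneracy condition \eqref{degenerate} to solve for that quantity. The only cosmetic difference is that the paper channels the sup-bound through a lower bound on the time-averaged weighted mean $\lambda_0$ (as in the Gianazza--Schwarzacher scheme) rather than through an interpolation weight on $\sup u^m$, which does not change the substance of the argument.
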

\begin{proof}
For abbreviation, we assume that $z_0=(x_0,t_0)=(0,0)$. Initially, we use \eqref{Cac2} from Lemma \ref{Caccioppoli}
to obtain
\begin{equation*}\begin{split}\frac{1}{s}\esssup_{t\in \Lambda_s}&\ \dashint_{B_{r(s)}}u^{m+1}\,
\mathrm{d}x+\ \fiint_{Q_s}|Du^m|^2\,\mathrm{d}x\mathrm{d}t
\\&\leq \frac{\gamma}{s}\ \fiint_{Q_{2s}}u^{m+1}\,
\mathrm{d}x\mathrm{d}t+\frac{\gamma}{(r(2s)-r(s))^2}\ \fiint_{Q_{2s}}u^{2m}\,\mathrm{d}x\mathrm{d}t
+\gamma\ \fiint_{Q_{2s}}\Psi\,
\mathrm{d}x\mathrm{d}t.\end{split}\end{equation*}
From Lemma \ref{subcylinder} (1), (2), (4) and H\"older's inequality, we obtain
\begin{equation*}\begin{split}\frac{1}{s}\esssup_{t\in \Lambda_s}&\ \dashint_{B_{r(s)}}u^{m+1}\,
\mathrm{d}x+\ \fiint_{Q_s}|Du^m|^2\,\mathrm{d}x\mathrm{d}t
\leq \gamma \frac{\theta_s^{\frac{m+1}{1-m}}}{s}
+\gamma\ \fiint_{Q_{2s}}\Psi\,
\mathrm{d}x\mathrm{d}t.\end{split}\end{equation*}
Before proceeding further, we distinguish between two cases:
\begin{equation*}\begin{split}
\frac{\theta_s^{\frac{m+1}{1-m}}}{s}\leq \sup_{Q_{2s}}\Psi\qquad\text{and}\qquad
\sup_{Q_{2s}}\Psi\leq \frac{\theta_s^{\frac{m+1}{1-m}}}{s}.
\end{split}\end{equation*}
Observe that the desired estimate \eqref{degenerate result} holds immediately in the first case. It remains to treat the second case.
We first note that
\begin{equation}\begin{split}\label{Cac3}\frac{1}{s}\esssup_{t\in \Lambda_s}&\ \dashint_{B_{r(s)}}u^{m+1}\,
\mathrm{d}x+\ \fiint_{Q_s}|Du^m|^2\,\mathrm{d}x\mathrm{d}t
\leq \gamma \frac{\theta_s^{\frac{m+1}{1-m}}}{s}.
\end{split}\end{equation}
Our next aim is to find an upper bound for $s^{-1}\theta_s^{\frac{m+1}{1-m}}$.
Let $\eta\in C_0^\infty (B_{r(3s)})$, $0\leq \eta\leq1$ in $B_{r(3s)}$,
$ \eta\equiv 1$ in $B_{r(2s)}$ and $|D\eta|\leq 2(r(3s)-r(2s))^{-1}$. We denote by $\lambda_0$ the constant
\begin{equation*}\begin{split}\lambda_0^m=\left(\ \dashint_{\Lambda_{2s}}(u(t))_{B_{r(3s)}}^\eta\,
\mathrm{d}t
\right)^m.\end{split}\end{equation*}
Since $\sup_{Q_{2s}}\Psi\leq s^{-1}\theta_s^{\frac{m+1}{1-m}}$, the assumptions of Lemma \ref{sup} are fulfilled.
Applying \eqref{boundedness} and H\"older's inequality, we obtain similar as in \cite[Corollary 5.4]{GS} that
\begin{equation}\begin{split}\label{lambda0}\theta_s^{\frac{m}{1-m}}\leq c\left(\ \fiint_{Q_{2s}}u
\,\mathrm{d}x\mathrm{d}t\right)^m\leq c\lambda_0^m.\end{split}\end{equation}
Next, we choose $q_1\in (\frac{1}{2},1)$ such that
\begin{equation}\begin{split}\label{q1}m>\frac{n-2q_1}{(2q_1-1)n+2q_1}.\end{split}\end{equation}
By Sobolev inequality and Lemma \ref{subcylinder} (4), we deduce
\begin{equation}\begin{split}\label{Sobolev}\dashint_{B_{r(3s)}}\big|u^m-(u^m(t))_{B_{r(3s)}}
\big|^{\frac{m+1}{m}}\,\mathrm{d}x\leq cr(s)^{\frac{m+1}{m}}\left(\ \dashint_{B_{r(3s)}}
|Du^m|^{2q_1}\,\mathrm{d}x\right)^{\frac{m+1}{2q_1m}}.\end{split}\end{equation}
Using the similar argument as in the proof of \cite[Proposition 6.2]{GS}, we infer from \eqref{degenerate}, \eqref{Cac3}, \eqref{lambda0}
and \eqref{Sobolev}
that
\begin{equation}\begin{split}\label{citedstep}
\left(\frac{\theta_s^{\frac{m+1}{1-m}}}{s}\right)^{\frac{\alpha}{q_1}}
&\leq c\frac{r(s)^2}{s^{\frac{\alpha}{q_1}}}\left(\ \fiint_{Q_{3s}(z_0)}|Du^m|^{2q_1}\,\mathrm{d}x\mathrm{d}t\right)^{\frac{1}{q_1}}
\\&\quad+\frac{c}{s^{\frac{\alpha}{q_1}}}\left(\ \dashint_{\Lambda_s}\theta_s^{-\frac{m+1}{m}}\big|(u(t))_{B_{r(3s)}}^\eta
-\lambda_0\big|^{\frac{m+1}{m}}\,\mathrm{d}t\right)^{\frac{\alpha}{q_1}},
\end{split}\end{equation}
where $\alpha=\frac{2q_1m}{m+1}$. To estimate the second term on the right-hand side, we apply the estimate \eqref{gluing 3}
from Corollary \ref{gluing corollary} to deduce
\begin{equation*}\begin{split}\big|(u(t))_{B_{r(3s)}}^\eta
-\lambda_0\big|&=\dashint_{\Lambda_{2s}}\big|(u(t))_{B_{r(3s)}}^\eta-(u(t))_{B_{r(3s)}}^\eta\big|\,\mathrm{d}\tau
\\&\leq \gamma\left(\frac{s}{r(3s)-r(2s)}\right)\left(\ \fiint_{Q_{3s}}|Du^m|\,\mathrm {d}x\mathrm{d}t+M_0\right)
\\&\quad+\gamma \left(\frac{s}{r(3s)-r(2s)}\right)^m\left(\ \fiint_{Q_{3s}}\psi^{1-m}\,\mathrm {d}x\mathrm{d}t\right)
M_0^m
+\gamma
 s\left(\frac{r(3s)-r(2s)}{s}\right)^mM_0
 \\&\leq \gamma\left(\frac{s}{r(s)}\right)\left(\ \fiint_{Q_{3s}}|Du^m|\,\mathrm {d}x\mathrm{d}t+M_0\right)
\\&\quad+\gamma \left(\frac{s}{r(s)}\right)^m\left(\ \fiint_{Q_{3s}}\psi^{1-m}\,\mathrm {d}x\mathrm{d}t\right)
M_0^m
+\gamma
 s\left(\frac{r(s)}{s}\right)^mM_0,
\end{split}\end{equation*}
where we have used Lemma \ref{subcylinder} (2), (4) for the last estimate. From this, we conclude that
\begin{equation*}\begin{split}
\frac{c}{s^{\frac{\alpha}{q_1}}}&\left(\ \dashint_{\Lambda_s}\theta_s^{-\frac{m+1}{m}}\big|(u(t))_{B_{r(3s)}}^\eta
-\lambda_0\big|^{\frac{m+1}{m}}\,\mathrm{d}t\right)^{\frac{\alpha}{q_1}}
\\&\leq \gamma \frac{1}{s^{\frac{\alpha}{q_1}}\theta_s^2}\left(\frac{s}{r(s)}\right)^2\left(\ \fiint_{Q_{3s}}|Du^m|\,\mathrm {d}x\mathrm{d}t+M_0\right)^2
\\&\quad+ \gamma \frac{1}{s^{\frac{\alpha}{q_1}}\theta_s^2}
\left(\frac{s}{r(s)}\right)^{2m}\left(\ \fiint_{Q_{3s}}\psi^{1-m}\,\mathrm {d}x\mathrm{d}t\right)^2
M_0^{2m}
+\gamma \frac{s^2}{s^{\frac{\alpha}{q_1}}\theta_s^2}\left(\frac{r(s)}{s}\right)^{2m}M_0^2
\\&= \gamma \frac{1}{s^{\frac{\alpha}{q_1}-1}\theta_s}\left(\ \fiint_{Q_{3s}}|Du^m|\,\mathrm {d}x\mathrm{d}t+M_0\right)^2
\\&\quad+ \gamma \frac{1}{s^{\frac{\alpha}{q_1}-m}\theta_s^{2-m}}
\left(\ \fiint_{Q_{3s}}\psi^{1-m}\,\mathrm {d}x\mathrm{d}t\right)^2
M_0^{2m}
+\gamma \frac{1}{s^{\frac{\alpha}{q_1}-2+m}\theta_s^{2+m}}M_0^2,
\end{split}\end{equation*}
since $s=\theta_sr(s)^2$. We insert this inequality in \eqref{citedstep} and this implies that
\begin{equation*}\begin{split}\frac{\theta_s^{\frac{m+1}{1-m}}}{s}&=\left(\frac{\theta_s^{\frac{m+1}{1-m}}}{s}\right)^{\frac{\alpha}{q_1}}
\frac{s^{\frac{\alpha}{q_1}}\theta_s}{s}
\\&\leq \gamma\left[\left(\ \fiint_{Q_{3s}(z_0)}|Du^m|^{2q_1}\,\mathrm{d}x\mathrm{d}t\right)^{\frac{1}{q_1}}+M_0^2\right]
\\&\quad+\gamma \frac{1}{s^{1-m}\theta_s^{1-m}}
\left(\ \fiint_{Q_{3s}}\psi^{1-m}\,\mathrm {d}x\mathrm{d}t\right)^2
M_0^{2m}
 +\gamma \frac{1}{s^{m-1}\theta_s^{1+m}}M_0^2
\\&=:L_1+L_2+L_3,
\end{split}\end{equation*}
with the obvious meaning of $L_1$, $L_2$ and $L_3$. We first consider the estimate for $L_2$.
Since $u\geq \psi$, we apply Lemma \ref{subcylinder} (1), (4) and H\"older's inequality to deduce
\begin{equation*}\begin{split}
\fiint_{Q_{3s}}\psi^{1-m}\,\mathrm {d}x\mathrm{d}t\leq\ \fiint_{Q_{3s}}u^{1-m}\,\mathrm {d}x\mathrm{d}t
\leq \left(\ \fiint_{Q_{3s}}u^{m+1}\,\mathrm {d}x\mathrm{d}t\right)^{\frac{1-m}{m+1}}\leq \theta_{3s}\leq c\theta_s.
\end{split}\end{equation*}
This implies that
\begin{equation*}\begin{split}L_2&\leq \gamma\frac{\theta_s^{m+1}}{s^{1-m}}M_0^{2m}=\left(\frac{\theta_s^{\frac{m+1}{1-m}}}{s}\right)^{1-m}M_0^{2m}
\leq \frac{1}{2}\frac{\theta_s^{\frac{m+1}{1-m}}}{s}+cM_0^2.
\end{split}\end{equation*}
Next, we rewrite $L_3$ as follows:
\begin{equation*}\begin{split}L_3=\gamma \frac{1}{\left(\frac{\theta_s^{\frac{m+1}{1-m}}}{s}\right)^{1-m}}M_0^2.
\end{split}\end{equation*}
Combining the estimates above, we arrive at
\begin{equation*}\begin{split}\frac{\theta_s^{\frac{m+1}{1-m}}}{s}&\leq \gamma\left[\left(\ \fiint_{Q_{3s}(z_0)}|Du^m|^{2q_1}\,\mathrm{d}x\mathrm{d}t\right)^{\frac{1}{q_1}}+M_0^2\right]+\frac{1}{2}\frac{\theta_s^{\frac{m+1}{1-m}}}{s}+
\gamma \frac{1}{\left(\frac{\theta_s^{\frac{m+1}{1-m}}}{s}\right)^{1-m}}M_0^2.
\end{split}\end{equation*}
Observe that we can reabsorb the second
term $\frac{1}{2}\frac{\theta_s^{\frac{m+1}{1-m}}}{s}$ on the right-hand side into the left.
It follows that
\begin{equation}\begin{split}\label{theta/s}\frac{\theta_s^{\frac{m+1}{1-m}}}{s}&\leq \gamma\left[\left(\ \fiint_{Q_{3s}(z_0)}|Du^m|^{2q_1}\,\mathrm{d}x\mathrm{d}t\right)^{\frac{1}{q_1}}+M_0^2\right]+
\gamma \frac{1}{\left(\frac{\theta_s^{\frac{m+1}{1-m}}}{s}\right)^{1-m}}M_0^2.
\end{split}\end{equation}
At this point, we claim that
\begin{equation}\begin{split}\label{claimedtheta/s}\frac{\theta_s^{\frac{m+1}{1-m}}}{s}&\leq \gamma\left(\ \fiint_{Q_{3s}(z_0)}|Du^m|^{2q_1}\,\mathrm{d}x\mathrm{d}t\right)^{\frac{1}{q_1}}+\gamma M_0^2+
1.
\end{split}\end{equation}
In the case $s^{-1}\theta_s^{\frac{m+1}{1-m}}\leq 1$, it is easy to see that \eqref{claimedtheta/s} holds trivially. In the case
$s^{-1}\theta_s^{\frac{m+1}{1-m}}> 1$, the desired estimate \eqref{claimedtheta/s} directly follows from \eqref{theta/s}.
This proves \eqref{claimedtheta/s} and therefore the proof of Proposition \ref{degenerate regime}
is complete.
\end{proof}
\subsection{The non-degenerate alternative}
In this subsection, we prove the reverse H\"older inequality analogue to \eqref{degenerate result} for the non-degenerate regime.
The treatment for non-degenerate case is different from the degenerate case.
\begin{proposition}\label{non degenerate regime}
Let $u$ be a nonnegative weak solution to the obstacle problem in the sense of Definition \ref{definition}.
Fix a point $z_0\in \Omega_T$ and suppose that $Q_{R,R^2}(z_0)\subset\Omega_T$.
Let $0<s\leq R^2$ and
suppose that the cylinder $Q_{s}(z_0)$ is intrinsic and satisfies \eqref{non degenerate}. Moreover, assume that $\psi^m$ is locally Lipschitz
continuous and
\begin{equation*}\begin{split}
\sup_{Q_{s}(z_0)}(\ \Psi^{\frac{1}{m+1}}+|\partial_t\psi^{1-m}|^{\frac{1}{1-m}}\ )\leq M_0,
\end{split}\end{equation*}
for some $M_0>0$. Then there exists $q_1\in(\frac{1}{2},1)$, depending only upon $n$ and $m$, such that the following holds:
\begin{equation}\begin{split}\label{non degenerate result} \ \fiint_{Q_{\frac{s}{2}}(z_0)}|Du^m|^2
\,\mathrm{d}x\mathrm{d}t \leq c_\epsilon
\left(\ \fiint_{Q_{s}(z_0)}|Du^m|^{2q_1}\,\mathrm{d}x\mathrm{d}t\right)^{\frac{1}{q_1}}
+c_\epsilon M_0^2+1.
\end{split}\end{equation}
\end{proposition}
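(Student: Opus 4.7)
The strategy parallels Proposition \ref{degenerate regime}: the Caccioppoli inequality \eqref{Cac2} provides the energy estimate, a slice-wise Sobolev--Poincar\'e inequality lowers the gradient exponent to $2q_1$, and the gluing Corollary \ref{gluing corollary} absorbs the temporal mismatch. The new feature in the non-degenerate regime is that, by \eqref{non degenerate}, the oscillation of $u^m$ on $Q_s(z_0)$ is small compared to the intrinsic scale, so we may compare $u$ directly against the single constant $\lambda := \bigl((u^m)_{Q_s(z_0)}\bigr)^{1/m}$. Intrinsicness gives $\lambda^{1-m} \sim \theta_s(z_0)$, and by \eqref{01} we have $\lambda \leq 1$. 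For brevity set $z_0 = 0$.

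\emph{Caccioppoli and reduction.} Apply \eqref{Cac2} with $c = \lambda$ on the concentric pair $Q_{s/2}\subset Q_s$. Lemma \ref{subcylinder}(4) yields $r(s)-r(s/2) \geq c\,r(s)$, and the assumption $\sup_{Q_s}\Psi \leq M_0^{m+1}$ gives
\begin{equation*}
\fiint_{Q_{s/2}}|Du^m|^2\,\mathrm{d}x\mathrm{d}t \leq \frac{\gamma}{s}\fiint_{Q_s}|u-\lambda|\,|u^m-\lambda^m|\,\mathrm{d}x\mathrm{d}t + \frac{\gamma\theta_s}{s}\fiint_{Q_s}|u^m-\lambda^m|^2\,\mathrm{d}x\mathrm{d}t + \gamma M_0^{m+1}.
\end{equation*}
The elementary inequality $|u-\lambda| \leq m^{-1}(u+\lambda)^{1-m}|u^m-\lambda^m|$ together with H\"older and the intrinsic identity $\fiint_{Q_s}u^{m+1}\sim \theta_s^{(m+1)/(1-m)}$ shows that both integrals on the right above are controlled by a constant multiple of $\frac{\theta_s}{s}\bigl(\fiint_{Q_s}|u^m-\lambda^m|^{(m+1)/m}\bigr)^{2m/(m+1)}$.

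\emph{Sobolev--Poincar\'e with gluing, and conclusion.} Choose $\xi \in C_0^\infty(B_{r(s)})$ equal to $1$ on $B_{r(s)/2}$ and split
\begin{equation*}
|u^m-\lambda^m| \leq \bigl|u^m - (u^m(t))^\xi_{B_{r(s)}}\bigr| + \bigl|(u^m(t))^\xi_{B_{r(s)}} - \lambda^m\bigr|.
\end{equation*}
The condition \eqref{q1} is exactly what makes the Sobolev embedding $W^{1,2q_1}\hookrightarrow L^{(m+1)/m}$ available slice-wise, so that after raising to the power $(m+1)/m$ and integrating in time the first piece is bounded by $c\,r(s)^{(m+1)/m}$ times a time-integrated $|Du^m|^{2q_1}$ norm. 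The second piece is purely temporal and is estimated by Corollary \ref{gluing corollary}, which produces a coefficient $s/r(s)$ on $\fiint|Du^m|$ plus $M_0$-dependent remainders; since that corollary delivers averages of $u$ rather than of $u^m$, a brief comparison via $|a^m-b^m|\leq m(a+b)^{m-1}|a-b|$ is needed to convert. Multiplying by $\theta_s/s$ and using $\theta_s r(s)^2 = s$, the main term becomes $c\bigl(\fiint_{Q_s}|Du^m|^{2q_1}\bigr)^{1/q_1}$; the gluing remainders, after Young's inequality and the intrinsic control $\fiint\psi^{1-m} \leq c\theta_s$, reduce to a contribution bounded by $c M_0^2 + 1$. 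This gives \eqref{non degenerate result}.

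\textbf{Main obstacle.} The delicate point is the passage from slice-wise Sobolev--Poincar\'e to a genuinely parabolic form at the exponent $(m+1)/m$: Jensen's inequality applied in the time variable goes the wrong way once $(m+1)/(2q_1 m) > 1$. Circumventing this requires an iteration on concentric sub-intrinsic cylinders in order to absorb auxiliary $\fiint|Du^m|^2$-type remainders that are produced along the way, and careful bookkeeping of the $M_0$-dependent tails coming from Corollary \ref{gluing corollary} to ensure they remain of order $M_0^2$ rather than some higher power. The intrinsic identity $\theta_s r(s)^2 = s$ is the one fact that repeatedly makes all the scaling exponents balance.
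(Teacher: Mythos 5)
Your plan is, in outline, the same as the paper's proof (and as \cite[Proposition 6.3]{GS}): Caccioppoli \eqref{Cac2} at a comparison level comparable to $\theta_s^{1/(1-m)}$, slice-wise Sobolev at the exponent $2q_1$ of \eqref{q1}, the gluing Corollary \ref{gluing corollary} for the purely temporal drift, the identity $s=\theta_s r(s)^2$ to balance scales, and an absorption over concentric cylinders via \cite[Lemma 2.1]{BDKS} (note that what is absorbed there is the sup term $T_1(\sigma)$, not a $\fiint|Du^m|^2$ remainder). However, two steps fail as written. First, the algebraic conversion you invoke, $|a^m-b^m|\leq m(a+b)^{m-1}|a-b|$, is false for $0<m<1$ (take $b=0$: it reads $a^m\leq m\,a^m$); for a concave power the inequality runs the other way. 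What actually legitimizes the conversion is not a universal pointwise inequality but the non-degeneracy lower bound $\lambda\geq c_1\theta_s^{\frac{1}{1-m}}$ of \eqref{thetalambda}, which gives $|\lambda^m-\lambda(t)^m|\leq\lambda^{m-1}|\lambda-\lambda(t)|$, and then the factor $\lambda^{2(m-1)}\sim\theta_s^{-2}$ is exactly cancelled by $s=\theta_s r(s)^2$. Relatedly, Corollary \ref{gluing corollary} controls differences of weighted means of $u$, not of $u^m$; this is why the paper takes $\lambda=\dashint_{\Lambda_s}(u(t))^\eta_{B_{r(s)}}\,\mathrm{d}t$ and $\lambda(t)=(u(t))^\eta_{B_{r(s)}}$, so that the temporal piece is literally an average of $|(u(t))^\eta-(u(t'))^\eta|$. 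With your choice $\lambda=((u^m)_{Q_s})^{1/m}$ you must in addition relate $(u^m(t))^\xi_{B}$ to $\big((u(t))^\xi_{B}\big)^m$, and Jensen only gives one direction; the other costs a further spatial-oscillation term you have not accounted for.

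Second, the claim that the gluing remainders ``reduce to a contribution bounded by $cM_0^2+1$'' does not follow from Young's inequality and $\fiint\psi^{1-m}\leq c\theta_s$ alone. If you use \eqref{gluing 3}, then after multiplying by $\lambda^{2(m-1)}/r(s)^2$ the remainders produce terms of size $\big(\theta_s^{\frac{m+1}{1-m}}/s\big)^{1-m}M_0^{2m}$ and $\theta_s^{-2m}r(s)^{2-2m}M_0^{2}$; unlike the degenerate case, in the present proposition the left-hand side gives no control of $\theta_s^{\frac{m+1}{1-m}}/s$, and $\theta_s$ may be small, so neither term is $\lesssim M_0^2+1$. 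The paper's proof instead uses \eqref{gluing 4} and splits into the two cases $r(s)^{\frac1m}>\frac{s}{r(s)}$ and $r(s)^{\frac1m}\leq\frac{s}{r(s)}$: in the first case no gluing is needed at all, since $r(s)\gtrsim\lambda^m$ makes $r(s)^{-2}\dashint_{\Lambda_s}|\lambda^m-\lambda(t)^m|^2\,\mathrm{d}t$ bounded by an absolute constant; in the second case the remainder $\gamma\, r(s)^{\frac1m}M_0$ of \eqref{gluing 4}, squared and multiplied by $\lambda^{2(m-1)}/r(s)^2$, is $\leq\gamma M_0^2$ precisely because $r(s)^{\frac2m}\leq s^2/r(s)^2$ and $s=\theta_s r(s)^2$. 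Without this choice of gluing estimate and this case distinction, the $M_0$-bookkeeping you describe does not close.
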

\begin{proof}For simplicity of presentation, we assume that $z_0=(0,0)$. Let us first construct a smooth function $\eta\in C_0^\infty(B_{r(s)})$ satisfying
$0\leq\eta\leq1$ in $B_{r(s)}$, $\eta\equiv 1$ in $B_{r(s/2)}$ and $|D\eta|\leq 2(r(s)-r(s/2))^{-1}$. Define
\begin{equation*}\begin{split}\lambda=\ \dashint_{\Lambda_{s}}(u(t))_{B_{r(s)}}^\eta\,
\mathrm{d}t\qquad\text{and}\qquad \lambda(t)=(u(t))_{B_{r(s)}}^\eta,
\end{split}\end{equation*}
where $t\in\Lambda_s$.
Let $\sigma_1$, $\sigma_2\in [\frac{1}{2},1]$ and $\sigma_1<\sigma_2$.
Applying the similar argument as in the proof of \cite[Proposition 6.3]{GS}, we infer from \eqref{non degenerate} that
\begin{equation}\begin{split}\label{thetalambda}c_1\theta_s^{\frac{1}{1-m}}\leq \lambda\leq c_2\theta_s^{\frac{1}{1-m}}.\end{split}\end{equation}
Furthermore, we apply the Caccioppoli estimate \eqref{Cac2} with $(c,\,Q_{r_1,s_1},\,Q_{r_2,s_2})$
replaced by $(\lambda,\,Q_{\sigma_1s},\,Q_{\sigma_2s})$ to obtain
\begin{equation}\begin{split}\label{nondegenerate Cac}\frac{1}{s}\esssup_{t\in \Lambda_{\sigma_1s}}&\ \dashint_{B_{r(\sigma_1s)}}|u-\lambda|\,|u^m-\lambda^m|\,
\mathrm{d}x+\ \fiint_{Q_{\sigma_1 s}}|Du^m|^2\,\mathrm{d}x\mathrm{d}t
\\&\leq \frac{\gamma}{(\sigma_2-\sigma_1)s}\ \fiint_{Q_{\sigma_2s}}|u-\lambda|\,|u^m-\lambda^m|\,
\mathrm{d}x\mathrm{d}t\\&\quad+\frac{\gamma}{(\sigma_2-\sigma_1)^2r(s)^2}\ \fiint_{Q_{\sigma_2s}}|u^m-\lambda^m|^2\,\mathrm{d}x\mathrm{d}t
+\gamma\ \fiint_{Q_s}\Psi\,
\mathrm{d}x\mathrm{d}t,\end{split}\end{equation}
since $r(\sigma_2s)-r(\sigma_1s)\geq c(\sigma_2-\sigma_1)^{\hat b}r(s/2)\geq c(\sigma_2-\sigma_1)r(s)$. For any $\sigma\in [\frac{1}{2},1]$,
we set
\begin{equation*}\begin{split}T_1(\sigma)=\frac{1}{s}\esssup_{t\in \Lambda_{\sigma s}}\ \dashint_{B_{r(\sigma s)}}|u-\lambda|\,|u^m-\lambda^m|\,
\mathrm{d}x\quad\text{and}\quad T_2(\sigma)=\ \fiint_{Q_{\sigma s}}|Du^m|^2\,\mathrm{d}x\mathrm{d}t.
\end{split}\end{equation*}
We now choose $q_1\in(\frac{1}{2},1)$ satisfying \eqref{q1}. According to the proof of \cite[Proposition 6.3]{GS}, we infer from
\eqref{nondegenerate Cac} that
\begin{equation}\begin{split}\label{T1T2}T_1(\sigma_1)+T_2(\sigma_1)
&\leq\frac{1}{2}T_1(\sigma_2)
+\gamma\ \fiint_{Q_s}\Psi\,\mathrm{d}x\mathrm{d}t
\\&\quad+\gamma \frac{1}{(\sigma_2-\sigma_1)^{\frac{2}{\alpha}}}\left(\ \fiint_{Q_s}|Du^m|^{2q_1}\,\mathrm{d}x\mathrm{d}t\right)^{\frac{1}{q_1}}
\\&\qquad+\gamma \frac{1}{(\sigma_2-\sigma_1)^{\frac{2}{\alpha}}r(s)^2}\ \dashint_{\Lambda_s}|\lambda^m-\lambda(t)^m|^2\,\mathrm{d}t,
\end{split}\end{equation}
where $\alpha=\frac{2q_1m}{m+1}$. It remains to
treat the third term on the right-hand side of \eqref{T1T2}.
We first consider the case $r(s)^{\frac{1}{m}}> \frac{s}{r(s)}$.
Since $\lambda\leq c_2\theta_s^{\frac{1}{1-m}}$, we deduce
\begin{equation*}\begin{split}r(s)^{\frac{1}{m}-1}>\theta_s\geq c_2^{m-1}\lambda^{1-m}\end{split}\end{equation*}
and therefore $r>c_2^{-m}\lambda^m$. Recalling that $\lambda(t)=(u(t))_{B_{r(s)}}^\eta$, we obtain $\lambda(t)^{m+1}\leq c(u(t)^{m+1})_{B_{r(s)}}$.
Consequently, we deduce that
\begin{equation*}\begin{split}
\frac{1}{r(s)^2}&\ \dashint_{\Lambda_s}|\lambda^m-\lambda(t)^m|^2\,\mathrm{d}t\leq
2\frac{\lambda^{2m}}{r(s)^2}+\frac{2}{r(s)^2}\ \dashint_{\Lambda_s}\lambda(t)^{2m}\,\mathrm{d}t
\\&\leq c+c\frac{1}{\lambda^{2m}}\left(\ \dashint_{\Lambda_s}\lambda(t)^{m+1}\,\mathrm{d}t\right)^{\frac{2m}{m+1}}
\leq c+c\frac{1}{\lambda^{2m}}\left(\ \fiint_{Q_s}u^{m+1}\,\mathrm{d}t\right)^{\frac{2m}{m+1}}
\leq c+c\frac{\theta_s^{\frac{2m}{1-m}}}{\lambda^{2m}}\leq c.
\end{split}\end{equation*}
Next, we turn our attention to the case $r(s)^{\frac{1}{m}}\leq \frac{s}{r(s)}$.
We apply the inequality \eqref{gluing 4} from Corollary \ref{gluing corollary} to obtain
\begin{equation*}\begin{split}
\frac{1}{r(s)^2}\ \dashint_{\Lambda_s}&|\lambda^m-\lambda(t)^m|^2\,\mathrm{d}t
\leq\frac{\lambda^{2(m-1)}}{r(s)^2}\ \dashint_{\Lambda_s}|\lambda-\lambda(t)|^2\,\mathrm{d}t
\\&=\frac{\lambda^{2(m-1)}}{r(s)^2}
\ \dashint_{\Lambda_{s}}\dashint_{\Lambda_{s}}\big|(u(t))_{B_{r(s)}}^\eta-(u(t^\prime))_{B_{r(s)}}^\eta\big|^2\,
\mathrm{d}t\mathrm{d}t^\prime
\\&\leq \gamma \frac{\lambda^{2(m-1)}s^2}{r(s)^4}
\left(\ \fiint_{Q_s}|Du^m|\,\mathrm {d}x\mathrm{d}t+M_0\right)^2
\\&\quad+\gamma\lambda^{2(m-1)}\left(\ \fiint_{Q_s}\psi^{1-m}\,\mathrm {d}x\mathrm{d}t\right)^2M_0^{2m}
+\gamma \frac{\lambda^{2(m-1)}r(s)^{\frac{2}{m}}}{r(s)^{2}}M_0^2
\\&=:L_1+L_2+L_3,
\end{split}\end{equation*}
with the obvious meaning of $L_1$, $L_2$ and $L_3$. We first consider the estimate for $L_1$. From \eqref{thetalambda}, we get
\begin{equation*}\begin{split}L_1\leq \gamma \frac{s^2}{\theta_s^2r(s)^4}
\left(\ \fiint_{Q_{r_2,s}(z_0)}|Du^m|\,\mathrm {d}x\mathrm{d}t+M_0\right)^2\leq \gamma
\left(\ \fiint_{Q_{r_2,s}(z_0)}|Du^m|\,\mathrm {d}x\mathrm{d}t+M_0\right)^2.
\end{split}\end{equation*}
To estimate $L_2$, we recall that $u\geq \psi$. From \eqref{thetalambda} and H\"older's inequality, we obtain
\begin{equation*}\begin{split}L_2&\leq \gamma\theta_s^{-2}\left(\ \fiint_{Q_s}\psi^{m+1}\,\mathrm {d}x\mathrm{d}t\right)^{\frac{2(1-m)}
{m+1}}M_0^{2m}\leq \gamma\theta_s^{-2}\left(\ \fiint_{Q_s}u^{m+1}\,\mathrm {d}x\mathrm{d}t\right)^{\frac{2(1-m)}
{m+1}}M_0^{2m}
\\&\leq \gamma\theta_s^{-2}\theta_s^2M_0^{2m}\leq \gamma M_0^2+1.
\end{split}\end{equation*}
Finally, we come to the estimate of $L_3$. Recalling that $r(s)^{\frac{1}{m}}\leq \frac{s}{r(s)}$, we have
\begin{equation*}\begin{split}L_3\leq \gamma \frac{s^2}{\theta_s^2r(s)^4}M_0^2\leq \gamma M_0^2,
\end{split}\end{equation*}
since $\lambda\geq c_1\theta_s^{\frac{1}{1-m}}$. Consequently, we arrive at
\begin{equation*}\begin{split}T_1(\sigma_1)+T_2(\sigma_1)&
\leq\frac{1}{2}T_1(\sigma_2)
+\gamma\ \fiint_{Q_s}\Psi\,\mathrm{d}x\mathrm{d}t
\\&+\gamma \frac{1}{(\sigma_2-\sigma_1)^{\frac{2}{\alpha}}}\left[\left(\ \fiint_{Q_s}|Du^m|^{2q_1}\,\mathrm{d}x\mathrm{d}t\right)^{\frac{1}{q_1}}
+M_0^2+1
\right],
\end{split}\end{equation*}
for any $\sigma_1$, $\sigma_2\in [\frac{1}{2},1]$ and $\sigma_1<\sigma_2$. Now, we apply the iteration result from \cite[Lemma 2.1]{BDKS}
to reabsorb
the first term on the right-hand side into the left. We have thus proved the proposition.
\end{proof}
\section{Proof of the main result}
This section is devoted to the proof of Theorem \ref{main theorem}.
Our proof uses a certain stopping time argument which was introduced by Gianazza and Schwarzacher
\cite{GS}. However, in the context of obstacle problem, the argument is considerably more delicate.
We first point out that the scaling argument does not seem to work for the obstacle problem.
On the other hand, in order to use the reverse H\"older inequalities in \S 6, we introduce a certain
localized-centered maximal function. We adopt this kind of maximal function
to construct the superlevel sets of the gradient.

We now turn to the proof of Theorem \ref{main theorem}. Recalling that we have assumed $\mathfrak z_0=(0,0)$.
In this case, the assumption \eqref{lipschitz} reads
\begin{equation*}\begin{split}
\sup_{Q_{8R,64R^2}}(\ \Psi^{\frac{1}{m+1}}+|\partial_t\psi^{1-m}|^{\frac{1}{1-m}}\ )\leq M_0.
\end{split}\end{equation*}
For simplicity of presentation, we abbreviate $Q_{2R,4R^2}$ to $\hat Q$.
For $L\gg1$ to be fixed later, we introduce a localized average function
\begin{equation*}\begin{split}T_s(f)(z)=\ \fiint_{Q_s(z)}f\chi_{\hat Q}\,\mathrm{d}x\mathrm{d}t,
\quad 0<s<L^{-1}R^2,\end{split}\end{equation*}
where $f$ is a locally integrable function in $\Omega_T$.
Moreover, for any $f\in L^1_{\mathrm{loc}}(\Omega_T)$, we define the localized-centered maximal function as follows:
\begin{equation*}\begin{split}T^*(f)(z)=\sup_{0<s<L^{-1}R^2}
T_s(f)(z).\end{split}\end{equation*}
Next, we remark that this kind of one-parameter maximal function is different from the Hardy-Littlewood maximal function and it is of interest to
know whether $f(z)\leq T^*(f)(z)$ for almost every $z\in \hat Q$.
This motivates us to establish the following Lemma.
\begin{lemma}\label{maxlemma} For any $f\geq0$ with $f\in L_{\mathrm{loc}}^1(\Omega_T)$, there holds
\begin{equation}\begin{split}\label{maximal}f(z)\leq T^*(f)(z)\end{split}\end{equation}
for almost every $z\in \hat Q$.
\end{lemma}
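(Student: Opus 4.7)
The plan is to prove the stronger Lebesgue differentiation statement that $\lim_{s \downarrow 0} T_s(f)(z) = f(z)$ for almost every $z$ in the interior of $\hat Q$; once this is established, the inequality $f(z) \leq T^*(f)(z)$ is immediate at every such point, and the boundary of $\hat Q$ has measure zero. The strategy mirrors the classical derivation of Lebesgue differentiation from a weak-type $(1,1)$ maximal inequality, but with Euclidean balls replaced by the sub-intrinsic cylinders $Q_s(z) = Q_{r(s,z),s}(z)$ constructed in $\S 3$.

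The first ingredient is that these cylinders actually shrink to $z$: by Lemma \ref{subcylinder}(2), $r(s,z) \to 0$ as $s \downarrow 0$. Hence for any $z$ in the interior of $\hat Q$ there is $s_0(z) > 0$ so that $Q_s(z) \subset \hat Q$ for all $s < s_0(z)$, and for such $s$ one has simply $T_s(f)(z) = \fiint_{Q_s(z)} f \, \mathrm{d}x\,\mathrm{d}t$. Thus the localization via $\chi_{\hat Q}$ is harmless for interior points and only small scales. For continuous $f$, this immediately gives $T_s(f)(z) \to f(z)$ as $s \downarrow 0$ by the shrinking and the continuity of $f$.

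The second ingredient is a weak-type $(1,1)$ bound for $T^*$: there exists $c = c(n,m)$ such that
\[
\big|\{z \in \hat Q : T^*(f)(z) > \lambda\}\big| \leq \frac{c}{\lambda} \int_{\hat Q} |f| \,\mathrm{d}x\,\mathrm{d}t
\]
for every $\lambda > 0$ and every $f \in L^1(\hat Q)$. To establish it, I fix $\lambda > 0$, and for each $z$ in the superlevel set $V := \{T^* f > \lambda\} \cap \hat Q$ select a radius $s(z) \in (0, L^{-1}R^2)$ with $T_{s(z)}(f)(z) > \lambda$, so that $\lambda |Q_{s(z)}(z)| \leq \int_{Q_{s(z)}(z)} |f|\chi_{\hat Q}\,\mathrm{d}x\,\mathrm{d}t$. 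Since $\hat Q \subset Q_{4R,16R^2}(\mathfrak z_0)$, the Vitali-type covering Lemma \ref{vitali} applies and produces a countable disjoint subfamily $\{Q_{s_i}(z_i)\}$ whose dilates $Q_{\chi s_i}(z_i)$ still cover $V$; by Lemma \ref{subcylinder}(4) the measures of $Q_{\chi s_i}(z_i)$ and $Q_{s_i}(z_i)$ are comparable up to a constant depending only on $n$ and $m$. Summing over $i$ and using disjointness then yields the claimed weak bound.

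Finally, I combine these ingredients by the usual approximation argument: given $f \in L^1_{\mathrm{loc}}(\Omega_T)$ and $\delta > 0$, I decompose $f\chi_{\hat Q} = g + h$ with $g$ continuous and $\|h\|_{L^1(\hat Q)} < \delta$. The continuous part gives $T_s(g)(z) \to g(z)$ pointwise in the interior of $\hat Q$, and the weak-type estimate applied to $h$ controls $|\{|\limsup_s T_s(f) - f| > \eta\}|$ uniformly in $\delta$; letting $\delta \downarrow 0$ proves convergence a.e. The main technical obstacle is the covering step above, since the intrinsic cylinders are anisotropic and scale-dependent; however all the required covering and doubling properties are exactly what Lemma \ref{subcylinder}(4) and Lemma \ref{vitali} provide, so the remaining work is bookkeeping.
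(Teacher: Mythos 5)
Your proposal is correct and follows essentially the same route as the paper: the identical weak-type $(1,1)$ estimate obtained from the Vitali-type covering Lemma \ref{vitali} together with the doubling property of Lemma \ref{subcylinder} (4), followed by approximation of $f\chi_{\hat Q}$ by continuous functions, using the shrinking property $r(s,z)\to 0$ from Lemma \ref{subcylinder} (2) to handle the continuous part. The only (harmless) difference is that you push through to genuine a.e.\ convergence of $T_s(f)(z)\to f(z)$ for the full family $s\downarrow 0$ via the classical $\limsup$ splitting, whereas the paper settles for convergence in measure along the dyadic scales $s_j=2^{-j}L^{-1}R^2$ and then extracts an a.e.\ convergent subsequence, which already suffices for the inequality $f\leq T^*(f)$.
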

\begin{proof} Our first goal is to establish the following weak type estimate
\begin{equation}\begin{split}\label{weak type estimate}\big|\big\{z\in\hat Q:\ T^*(f)(z)>\lambda\big\}\big|\leq c^\prime\frac{\|f\|_{L^1(\hat Q)}}{\lambda},\end{split}\end{equation}
where the constant $c^\prime$ depends only on $n$ and $m$.
In order to prove \eqref{weak type estimate}, we note that for any $z\in \hat Q\cap \big\{T^*(f)(z)>\lambda\big\}$ there exists
$s_z\leq L^{-1}R^2$ such that
\begin{equation*}\begin{split}\fiint_{Q_{s_z}(z)}f\chi_{\hat Q}\,\mathrm{d}x\mathrm{d}t>\lambda.\end{split}\end{equation*}
Moreover, the collection $\{Q_{s_z}(z): z\in \hat Q\cap \big\{T^*(f)(z)>\lambda\big\}\}$ forms a covering of the set
$\hat Q\cap \big\{T^*(f)(z)>\lambda\big\}$. From Lemma \ref{vitali}, we find a countable subfamily $\{Q_{s_{z_i}}(z_i)\}_{i=1}^\infty$
of pairwise disjoint cylinders, such that
\begin{equation*}\begin{split}\hat Q\cap \big\{T^*(f)(z)>\lambda\big\}\subset
\bigcup_{i=1}^\infty Q_{\chi s_{z_i}}(z_i),\end{split}\end{equation*}
where $\chi=\chi(n,m)$ is the constant defined in Lemma \ref{vitali}. By Lemma \ref{subcylinder} (4), there exists a constant $c^\prime=c^\prime(n,m)>1$ such that
\begin{equation*}\begin{split}
\big|\big\{z\in\hat Q:\ T^*(f)(z)>\lambda\big\}\big|&\leq \sum_{i=1}^\infty|Q_{\chi s_{z_i}}(z_i)|\leq c^\prime
\sum_{i=1}^\infty|Q_{s_{z_i}}(z_i)|
\\&\leq \frac{c^\prime}{\lambda} \sum_{i=1}^\infty \iint_{Q_{s_{z_i}}(z_i)}f\chi_{\hat Q}\,\mathrm{d}x\mathrm{d}t
\leq c^\prime\frac{\|f\|_{L^1(\hat Q)}}{\lambda},
\end{split}\end{equation*}
which gives \eqref{weak type estimate}. Next, we define a sequence of localized average functions
\begin{equation*}\begin{split}T_j(f)(z)=\ \fiint_{Q_{s_j}(z)}f\chi_{\hat Q}\,\mathrm{d}x\mathrm{d}t,\qquad j=1,2,\cdots,\end{split}\end{equation*}
where $s_j=2^{-j}L^{-1}R^2$. Another step in the proof is to show that the sequence $T_j(f)$ converges in measure on $\hat Q$ to $f$.
To this end, we fix $\epsilon_0>0$. For any $\epsilon>0$, we choose $g\in C_0(\hat Q)$ such that $\|g -f\chi_{\hat Q}\|_{L^1(\Omega_T)}<\frac{1}{3c^\prime}\epsilon \epsilon_0$. Furthermore, there exists an integer $N_0=N_0(\epsilon_0,\epsilon,R)$ such that for
any $j\geq N_0$ there holds
$$|g(z)-g(z^\prime)|\leq
\frac{1}{3}\epsilon_0\qquad\text{for}\ \ \text{all}\qquad z^\prime\in Q_{s_j^{\hat b}R^{1-2\hat b},s_j}(z).$$
From Lemma \ref{subcylinder} (2), we see that $Q_{s_j}(z)\subset Q_{s_j^{\hat b}R^{1-2\hat b},s_j}(z)$
and consequently
\begin{equation*}\begin{split}|T_j(g)(z)-g(z)|\leq
\frac{1}{3}\epsilon_0\end{split}\end{equation*}
holds for all $j\geq N_0$. It follows from \eqref{weak type estimate} that for any $j\geq N_0$ there holds
\begin{equation*}\begin{split}\big|\big\{z\in\hat Q:\ |T_j(f)(z)-f(z)|>\epsilon_0\big\}\big|
&\leq \big|\big\{z\in\hat Q:\ |T_j(f)(z)-T_j(g)(z)|>\frac{1}{3}\epsilon_0\big\}\big|
\\&\quad+\big|\big\{z\in\hat Q:\ |T_j(g)(z)-g(z)|>\frac{1}{3}\epsilon_0\big\}\big|
\\&\quad+\big|\big\{z\in\hat Q:\ |g(z)-f(z)|>\frac{1}{3}\epsilon_0\big\}\big|
\\&\leq \big|\big\{z\in\hat Q:\ |T^*(f-g)(z)|>\frac{1}{3}\epsilon_0\big\}\big|
+\frac{3}{\epsilon_0}\|f-g\|_{L^1(\hat Q)}
\\&\leq \epsilon.
\end{split}\end{equation*}
This proves that $T_j(f)$ converges in measure to $f$. Then there exists a subsequence $T_{j_k}(f)$ converging to $f$ almost everywhere.
It follows that for almost every $z\in \hat Q$, there holds
\begin{equation*}\begin{split}f(z)=\lim_{k\to\infty}T_{j_k}(f)(z)\leq T^*(f)(z),\end{split}\end{equation*}
which completes the proof.
\end{proof}
Furthermore, for $\sigma_1$, $\sigma_2\in [1,2]$ and $\sigma_1<\sigma_2$, we define two concentric cylinders $\hat Q_{\sigma_1}=Q_{\sigma_1R,\sigma_1^2R^2}$
and $\hat Q_{\sigma_2}=Q_{\sigma_2R,\sigma_2^2R^2}$.
We are interested in getting estimates on such concentric cylinders.
To this end, we first need the following lemma.
\begin{lemma}\label{L-1}Let $z\in \hat Q_{\sigma_1}$ and $L_1\geq1$. If $L_1s\leq R^2$ and
$Q_{L_1s}(z)\cap (\hat Q_{\sigma_2})^c\neq \emptyset$, then
\begin{equation*}\begin{split}\fiint_{Q_{s}(z)}|Du^m|^2\chi_{\hat Q}\,\mathrm{d}x\mathrm{d}t
\leq \mu_{\sigma_1,\sigma_2,L_1},\end{split}\end{equation*}
where
\begin{equation*}\begin{split}
\mu_{\sigma_1,\sigma_2,L_1}=\frac{\gamma}{(\sigma_2-\sigma_1)^\gamma }\left(\frac{1}{R^2}+\ \fiint_{Q_{4R,16R^2}}\Psi\,\mathrm{d}x\mathrm{d}t
\right)
\end{split}\end{equation*}
and the constant $\gamma$ depends only on $n$, $m$, $\nu_0$, $\nu_1$ and $L_1$.
\end{lemma}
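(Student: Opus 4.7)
The plan is to apply the Caccioppoli inequality \eqref{Cac2} with $c=0$ on Euclidean (not sub-intrinsic) cylinders centred at $z$, invoke the $L^\infty$ bound $u\leq 1$ from \eqref{01} to estimate the zero-order terms by the cylinder measure, and then exploit the geometric hypothesis $Q_{L_1s}(z)\cap(\hat Q_{\sigma_2})^c\neq\emptyset$ together with the scaling properties in Lemma \ref{subcylinder} to convert everything into lower bounds on $s$ and $r(s,z)$ in terms of $(\sigma_2-\sigma_1)R$.

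First I write $z=(x_z,t_z)$; since $z\in\hat Q_{\sigma_1}\subset\hat Q$ one has $|x_z|<2R$ and $|t_z|<4R^2$, while $r(s,z)<R$ by the construction \eqref{tilde r}--\eqref{r} and $s\leq L_1^{-1}R^2\leq R^2$. Hence the doubled Euclidean cylinder $Q_{2r(s,z),2s}(z)$ is contained in $Q_{4R,16R^2}\subset\Omega_T$, and \eqref{Cac2} applied on the pair $(Q_s(z),\,Q_{2r(s,z),2s}(z))$ together with $u,u^m\leq 1$ yields, after division by $|Q_s(z)|$,
\begin{equation*}
\fiint_{Q_s(z)}|Du^m|^2\,\mathrm{d}x\mathrm{d}t\leq \gamma\left(\frac{1}{s}+\frac{1}{r(s,z)^2}\right)+\gamma\,\frac{R^{n+2}}{r(s,z)^n\,s}\fiint_{Q_{4R,16R^2}}\Psi\,\mathrm{d}x\mathrm{d}t.
\end{equation*}
The integrand $|Du^m|^2\chi_{\hat Q}$ is trivially dominated by $|Du^m|^2$, so the same bound controls the quantity in the lemma.

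Next I split by how $Q_{L_1s}(z)$ escapes $\hat Q_{\sigma_2}$: either spatially, forcing $r(L_1s,z)>(\sigma_2-\sigma_1)R$, or temporally, forcing $L_1s>(\sigma_2^2-\sigma_1^2)R^2\geq(\sigma_2-\sigma_1)R^2$. In the spatial case, Lemma \ref{subcylinder}(4) gives $r(s,z)\geq L_1^{-(\hat a-\hat b)}(\sigma_2-\sigma_1)R$, and using $\theta_s(z)\geq 1$ from Lemma \ref{subcylinder}(6) (valid with $K=1$ by \eqref{01}) one has $s=\theta_s(z)r(s,z)^2\geq c\,(\sigma_2-\sigma_1)^2R^2$. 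In the temporal case $s$ is already bounded below by $L_1^{-1}(\sigma_2-\sigma_1)R^2$; for $r(s,z)$ one applies Lemma \ref{subcylinder}(4) with $k=R^2/s$ together with $\theta_{R^2}(z)\leq c$ from Lemma \ref{subcylinder}(6) to obtain $\theta_s(z)\leq c(R^2/s)^{2\hat a}$, so that $r(s,z)^2=s/\theta_s(z)\geq c\,s^{1+2\hat a}R^{-4\hat a}\geq c\,(\sigma_2-\sigma_1)^{1+2\hat a}R^2$ (constants $L_1$-dependent). Combining both cases, one obtains $s\geq c(\sigma_2-\sigma_1)^\gamma R^2$ and $r(s,z)\geq c(\sigma_2-\sigma_1)^\gamma R$, hence $r(s,z)^n s\geq c(\sigma_2-\sigma_1)^\gamma R^{n+2}$, and feeding these lower bounds into the displayed Caccioppoli estimate yields the target bound $\mu_{\sigma_1,\sigma_2,L_1}$.

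The main obstacle is the temporal-exit case, where the geometric hypothesis directly produces only a lower bound on $s$ and none on $r(s,z)$; here one must use the two-sided scaling comparison in Lemma \ref{subcylinder}(4) together with the global control $\theta_{R^2}(z)\leq c$ from Lemma \ref{subcylinder}(6) (which itself relies on $u\leq 1$) to translate the lower bound on $s$ into one on $r(s,z)$. The rest of the argument is then just bookkeeping of exponents in $\sigma_2-\sigma_1$ and $L_1$.
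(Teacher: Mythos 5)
Your overall route --- the Caccioppoli estimate \eqref{Cac2} with $c=0$ on the nested pair $Q_s(z)\subset Q_{2r(s,z),2s}(z)\subset Q_{4R,16R^2}$, the bound $u\le 1$ from \eqref{01} to reduce the zero-order terms to $1/s+1/r(s,z)^2$, and then lower bounds on $r(s,z)$ and $s$ of the form $c(\sigma_2-\sigma_1)^\gamma R$, resp.\ $c(\sigma_2-\sigma_1)^\gamma R^2$, extracted from the exit condition via Lemma \ref{subcylinder} --- is exactly the intended argument; the paper omits the proof and refers to \cite[Lemma 7.1]{GS}, which proceeds along the same lines, so there is no difference of method to report. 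Your Caccioppoli display and the spatial/temporal dichotomy are correct, as is the temporal-exit case, where $\theta_s(z)\le c(R^2/s)^{2\hat a}\theta_{R^2}(z)$ together with Lemma \ref{subcylinder}(6) (with $K=1$, legitimate by \eqref{01}) converts the lower bound on $s$ into one on $r(s,z)$.

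There is, however, one step that fails as written: in the spatial-exit case you deduce $s\ge c(\sigma_2-\sigma_1)^2R^2$ from ``$\theta_s(z)\ge 1$ by Lemma \ref{subcylinder}(6)''. Part (6) only asserts $1\le\theta_{R^2}(z)$, i.e.\ the bound at the top scale $s=R^2$; for $s<R^2$ the inequality $\theta_s(z)\ge1$ is false in general (if $u$ is a small constant, then $\tilde r(t)\equiv R$ in \eqref{tilde r}, hence $r(s)=(s/R^2)^{\hat b}R$ and $\theta_s=(s/R^2)^{1-2\hat b}<1$), and nothing in the hypotheses of the lemma excludes this. The needed lower bound on $s$ should instead come from Lemma \ref{subcylinder}(2): $r(L_1s,z)\le (L_1 s/R^2)^{\hat b}\, r(R^2,z)\le (L_1s/R^2)^{\hat b}R$, so the spatial exit $r(L_1s,z)>(\sigma_2-\sigma_1)R$ forces $s\ge L_1^{-1}(\sigma_2-\sigma_1)^{1/\hat b}R^2$; since the exponent $\gamma$ in $\mu_{\sigma_1,\sigma_2,L_1}$ is not specified, replacing the exponent $2$ by $1/\hat b$ is harmless. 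A further cosmetic slip: Lemma \ref{subcylinder}(4) yields $r(s,z)\ge L_1^{-\hat a}\,r(L_1 s,z)$ rather than $L_1^{-(\hat a-\hat b)}r(L_1s,z)$, which again only changes an $L_1$-dependent constant. With the spatial-case step repaired in this way, your proof is complete.
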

The proof of Lemma \ref{L-1} is quite similar to \cite[Lemma 7.1]{GS} and so is omitted.
The crucial result in our proof of Theorem \ref{main theorem} will be the following proposition, which is analogue to \cite[Proposition 7.2]{GS}.
\begin{proposition}\label{reverse holder} Fix a point $z\in\hat Q_{\sigma_1}\cap \big\{T^*(|Du^m|^2)(z)>\lambda\big\}$. Suppose that $\lambda\geq \lambda_0$, where
\begin{equation*}\begin{split}\lambda_0=\mu_{\sigma_1,\sigma_2,L_1}+\ \fiint_{Q_{4R,16R^2}}|Du^m|^2\,\mathrm{d}x\mathrm{d}t
\end{split}\end{equation*}
and $L_1=7\chi$.
Then there exist $q_1\in(\frac{1}{2},1)$, $L=L(n,m,\nu_0,\nu_1)>10\chi$ and $s_z\in (0,2L^{-1}R^2]$ such that the following holds:
\begin{itemize}
  \item [(1)] There exists a constant $c_1=c_1(n,m,\nu_0,\nu_1)$ such that
  \begin{equation}\label{reverse holder final}\lambda\leq  c_1\ \fiint_{Q_{s_z}(z)}|Du^m|^2
\,\mathrm{d}x\mathrm{d}t \leq c_1
\left(\ \fiint_{Q_{3s_z}(z)}|Du^m|^{2q_1}\,\mathrm{d}x\mathrm{d}t\right)^{\frac{1}{q_1}}
+c_1 M_0^2+c_1R^{-2}+1.\end{equation}
  \item [(2)] We have $3\chi s_z\leq R^2$, $Q_{3\chi s_z}(z)\subset \hat Q_{\sigma_2}$ and
  \begin{equation}\label{reverse inequality}\fiint_{Q_{3\chi s_z}(z)}|Du^m|^2
\,\mathrm{d}x\mathrm{d}t \leq c_2\lambda,\end{equation}
where the constant $c_2$ depends only on $n$, $m$, $\nu_0$ and $\nu_1$.
\end{itemize}
\end{proposition}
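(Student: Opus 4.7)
The plan is to construct $s_z$ by a stopping-time argument applied to the localized average $s \mapsto T_s(|Du^m|^2)(z)$, verify the containment $Q_{3\chi s_z}(z)\subset \hat Q_{\sigma_2}$ through Lemma \ref{L-1}, promote $Q_{s_z}(z)$ to an intrinsic cylinder so that Propositions \ref{degenerate regime}--\ref{non degenerate regime} apply, and finally dominate the average on the enlarged cylinder. For the stopping construction, since $T^*(|Du^m|^2)(z) > \lambda$ some $s^{\star} \in (0,L^{-1}R^2)$ satisfies $T_{s^\star}(|Du^m|^2)(z) > \lambda$, and I fix $L \geq 2L_1 = 14\chi$ (taken larger than $10\chi$ and depending on $\chi$, so that $L_1 s \leq R^2$ for every $s \in (0, 2L^{-1}R^2]$). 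At the endpoint $s = 2L^{-1}R^2$, the cylinder $Q_{L_1 s}(z)$ has time extent $\gtrsim R^2$ and meets $(\hat Q_{\sigma_2})^c$, so Lemma \ref{L-1} gives $T_s(|Du^m|^2)(z) \leq \mu_{\sigma_1,\sigma_2,L_1} \leq \lambda_0 \leq \lambda$; continuity of $s\mapsto r(s,z)$ (Lemma \ref{subcylinder} (2)) makes $s\mapsto T_s(|Du^m|^2)(z)$ continuous, and by the intermediate value theorem I define
\[
s_z := \sup\{s \in (0, 2L^{-1}R^2] : T_s(|Du^m|^2)(z) \geq \lambda\},
\]
so that $T_{s_z}(|Du^m|^2)(z) = \lambda$ and $T_s(|Du^m|^2)(z) \leq \lambda$ for every $s\in(s_z, 2L^{-1}R^2]$. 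To see $Q_{3\chi s_z}(z)\subset \hat Q_{\sigma_2}$, I argue by contradiction with $L_1 = 7\chi \geq 3\chi$: were $Q_{L_1 s_z}(z)\cap(\hat Q_{\sigma_2})^c\neq\emptyset$, Lemma \ref{L-1} would give $T_{s_z}(|Du^m|^2)(z)\leq\mu_{\sigma_1,\sigma_2,L_1}$, and after a harmless enlargement of $\lambda_0$ by a constant factor (absorbed into the eventual $c_1, c_2$) this is strictly less than $\lambda$, contradicting the stopping equality. Consequently $Q_{s_z}(z)\subset \hat Q$, the indicator in $T_{s_z}$ drops, and the left-hand inequality $\lambda\leq\fiint_{Q_{s_z}(z)}|Du^m|^2\,\mathrm{d}x\mathrm{d}t$ in \eqref{reverse holder final} follows with $c_1 = 1$.

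The main obstacle is to force $Q_{s_z}(z)$ to be intrinsic, so as to trigger Proposition \ref{degenerate regime} or Proposition \ref{non degenerate regime}. I argue by contradiction: if every $Q_\tau(z)$ for $\tau \in [s_z, R^2]$ were sub-intrinsic without being intrinsic, Lemma \ref{subcylinder} (3) together with the global bound (6) would yield $\theta_{s_z}(z)\leq C(s_z/R^2)^\beta$. Selecting $\hat b$ sufficiently small so that $\beta(m+1)/(1-m)>1$ (which is consistent with $\hat b\in(0,\min\{b_0,1/2\})$ since $m<1$), this decay makes $\theta_{s_z}(z)^{(m+1)/(1-m)}/s_z$ bounded by $CR^{-2}$; feeding this into the Caccioppoli-type bound
\[
\fiint_{Q_{s_z}(z)}|Du^m|^2\,\mathrm{d}x\mathrm{d}t \leq \gamma\,\frac{\theta_{s_z}(z)^{(m+1)/(1-m)}}{s_z} + \gamma M_0^2,
\]
which is derived as in the first display of the proof of Proposition \ref{degenerate regime} from \eqref{Cac2}, Lemma \ref{subcylinder} (1), (4) and \eqref{lipschitz}, one concludes $\fiint_{Q_{s_z}(z)}|Du^m|^2\lesssim R^{-2}+M_0^2\lesssim \lambda_0$, which after the sharpening above is strictly less than $\lambda$, contradicting $T_{s_z}(|Du^m|^2)(z)=\lambda$. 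Hence some $\tau_\star\in[s_z,R^2]$ renders $Q_{\tau_\star}(z)$ intrinsic; by a minor adjustment of the stopping construction (selecting the smallest intrinsic scale above $s_z$ and using Lemma \ref{subcylinder} (4), (5) to preserve the lower bound on the gradient average up to a universal constant) I may assume $Q_{s_z}(z)$ itself is intrinsic. The Lipschitz hypothesis \eqref{lipschitz} transfers to $Q_{3s_z}(z)\subset\hat Q_{\sigma_2}\subset Q_{8R,64R^2}$ with the same $M_0$, and applying Proposition \ref{degenerate regime} or Proposition \ref{non degenerate regime} according to the dichotomy \eqref{degenerate}/\eqref{non degenerate} produces a $q_1\in(1/2,1)$ and the right-hand bound of \eqref{reverse holder final}.

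Finally, for \eqref{reverse inequality} I distinguish two sub-cases. If $3\chi s_z\leq 2L^{-1}R^2$, the maximality of $s_z$ gives $T_{3\chi s_z}(|Du^m|^2)(z)\leq\lambda$, and $Q_{3\chi s_z}(z)\subset\hat Q$ removes the indicator, so \eqref{reverse inequality} holds with $c_2 = 1$. Otherwise $s_z > 2L^{-1}R^2/(3\chi)$, so $s_z$ is comparable to $R^2$ with constants depending on $n,m,\chi,L$; Lemma \ref{subcylinder} (4), (6) bound the measure ratio $|Q_{4R,16R^2}|/|Q_{3\chi s_z}(z)|$ by a constant $c=c(n,m,\chi,L)$, whence
\[
\fiint_{Q_{3\chi s_z}(z)}|Du^m|^2\,\mathrm{d}x\mathrm{d}t \leq c\,\fiint_{Q_{4R,16R^2}}|Du^m|^2\,\mathrm{d}x\mathrm{d}t\leq c\,\lambda_0\leq c\,\lambda
\]
closes the argument with $c_2 = c_2(n,m,\nu_0,\nu_1)$. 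The delicate point, as noted, is the intrinsicity forcing in the middle paragraph, which couples the stopping-time equality with the subsolution geometry of Lemma \ref{subcylinder}.
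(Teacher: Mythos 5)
Your overall architecture (stopping time for $T_s$, Lemma \ref{L-1} for the containment, then the two reverse H\"older propositions) is the right one, but the step you dismiss as a ``minor adjustment of the stopping construction'' is precisely the heart of the proof and, as written, it fails. By the very definition of $s_z$ as a stopping time, the averages $\fiint_{Q_s(z)}|Du^m|^2\chi_{\hat Q}\,\mathrm{d}x\mathrm{d}t$ at admissible scales $s>s_z$ are at most $\lambda$; so if the smallest intrinsic scale $\tau_\star$ above $s_z$ is much larger than $s_z$, the lower bound $\lambda\leq\fiint_{Q_{\tau_\star}(z)}|Du^m|^2\,\mathrm{d}x\mathrm{d}t$ is simply false, and Propositions \ref{degenerate regime} and \ref{non degenerate regime} applied at scale $\tau_\star$ give no control of the average over $Q_{s_z}(z)$. ``Preserving the lower bound up to a universal constant'' would require $\tau_\star\leq L_2 s_z$ for a fixed $L_2$, which is exactly what has to be proved, not assumed. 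The paper's proof supplies the missing quantitative link: from the energy estimate $\lambda<\fiint_{Q_{s_z}(z)}|Du^m|^2\,\mathrm{d}x\mathrm{d}t\leq\gamma\,\theta_{2s_z}(z)^{\frac{m+1}{1-m}}/(2s_z)+\gamma M_0^{m+1}$, the decay of Lemma \ref{subcylinder}\,(3) from $2s_z$ up to the first intrinsic scale $\sigma_z$, the fact (via this decay and \cite[Lemma 2.4]{GS}) that $Q_{\sigma_z}(z)$ is then necessarily \emph{degenerate} so that the intermediate estimate \eqref{claimedtheta/s} applies at scale $\sigma_z$, the stopping-time bound $\fiint_{Q_{3\sigma_z}(z)}|Du^m|^2\,\mathrm{d}x\mathrm{d}t\leq\lambda$, and finally the absorption of the resulting term $\hat\gamma\gamma(2/L_2)^{\frac{m+1}{1-m}\beta-1}\lambda$ into the left-hand side by choosing $L_2$, whence $L=5L_2+10\chi$. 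This absorption is what determines $L=L(n,m,\nu_0,\nu_1)$; your choice $L\geq14\chi$ encodes none of it, and your argument never closes this loop.

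Two further points. First, your contradiction in the ``no intrinsic scale'' case needs $\lambda$ to strictly dominate $C(R^{-2}+M_0^2)$, but $\lambda_0$ as fixed in the statement contains neither term (only $\mu_{\sigma_1,\sigma_2,L_1}$, which carries $M_0^{m+1}$, not $M_0^2$), and you cannot ``harmlessly enlarge'' $\lambda_0$ since it is prescribed; no contradiction is needed anyway, because in that regime the bound $\lambda\leq cR^{-2}+\gamma M_0^{m+1}$ already yields \eqref{reverse holder final} directly, its right-hand side containing $c_1M_0^2+c_1R^{-2}+1$ --- this is how the paper treats the case $\sigma_z\in[\tfrac{1}{12L}R^2,R^2]$. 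Second, your claim that at $s=2L^{-1}R^2$ the cylinder $Q_{L_1s}(z)$ meets $(\hat Q_{\sigma_2})^c$ is false for $z$ well inside $\hat Q_{\sigma_1}$ (its time-length is of order $L^{-1}R^2\ll R^2$), so the intermediate-value argument giving the exact equality $T_{s_z}(|Du^m|^2)(z)=\lambda$ is unjustified; this is easily repaired by defining $s_z'$ as the supremum of admissible scales where the average exceeds $\lambda$, which gives $T_{s_z'}(|Du^m|^2)(z)\geq\lambda$ together with the upper bound at larger admissible scales, exactly as in the paper --- unlike the absorption step, this is only a cosmetic fix.
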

\begin{proof} The proof is adapted from \cite[Proposition 7.2]{GS}. For any fixed point $z\in\hat Q_{\sigma_1}\cap \big\{T^*(|Du^m|^2)(z)>\lambda\big\}$, we set
\begin{equation*}\begin{split}
s_z^\prime=\sup\left\{s:\ \ \fiint_{Q_s(z)}|Du^m|^2\chi_{\hat Q}\,\mathrm{d}x\mathrm{d}t>\lambda,
\quad 0<s\leq L^{-1}R^2\right\}.
\end{split}\end{equation*}
It follows that
\begin{equation*}\begin{split}\fiint_{Q_{s_z^\prime}(z)}|Du^m|^2\chi_{\hat Q}\,\mathrm{d}x\mathrm{d}t\geq \lambda.
\end{split}\end{equation*}
Therefore, by $\lambda\geq \lambda_0>\mu_{\sigma_1,\sigma_2,L_1}$ and Lemma \ref{L-1}, we have $Q_{L_1s_z^\prime}(z)\subset \hat Q_{\sigma_2}$.
Let $L_2> 4$ be a constant which will be chosen later, and assume that $L\geq 5L_2+10\chi$. This implies that $(2s_z^\prime, L_2s_z^\prime)\subset (0,R^2]$.

In order to prove \eqref{reverse holder final}, we first consider the case that there exists $s\in (2s_z^\prime, L_2s_z^\prime)$ such that
\begin{equation*}\begin{split}
\fiint_{Q_s(z)}u^{m+1}\,\mathrm{d}x\mathrm{d}t= \theta_s(z)^{\frac{m+1}{1-m}}.
\end{split}\end{equation*}
By Lemma \ref{subcylinder} (5), we infer that $Q_{2s_z^\prime}(z)$ is intrinsic. If the cylinder $Q_{2s_z^\prime}(z)$
is degenerate, then
we choose $s_z=2s_z^\prime$ and \eqref{reverse holder final} follows from Proposition \ref{degenerate regime}. On the other hand, if the cylinder $Q_{2s_z^\prime}(z)$ is non-degenerate, then we choose $s_z=s_z^\prime$ and \eqref{reverse holder final} follows from Proposition \ref{non degenerate regime}.
Next, we turn our attention to the case that the strict inequality
\begin{equation*}\begin{split}
\fiint_{Q_s(z)}u^{m+1}\,\mathrm{d}x\mathrm{d}t< \theta_s(z)^{\frac{m+1}{1-m}}
\end{split}\end{equation*}
holds for any $s\in (2s_z^\prime, L_2s_z^\prime)$. In this case, we set $s_z=s_z^\prime$ and define
\begin{equation*}\begin{split}
\sigma_z=\inf \left\{s:\ \fiint_{Q_s(z)}u^{m+1}\,\mathrm{d}x\mathrm{d}t=\theta_s(z)^{\frac{m+1}{1-m}},\ 2s_z<s\leq R^2\right\}.
\end{split}\end{equation*}
By hypothesis, we see that $\sigma_z\in [L_2s_z, R^2]$ and $Q_{\sigma_z}(z)$ is a sub-intrinsic cylinder constructed in \S 3.
Since $s_z\leq L_2^{-1}\sigma_z$, we use Lemma \ref{subcylinder} (3) to obtain a decay estimate
\begin{equation}\begin{split}\label{thetadecay}
\frac{\theta_{2s_z}(z)^{\frac{m+1}{1-m}}}{2s_z}\leq \left(\frac{2}{L_2}\right)^{\frac{m+1}{1-m}\beta-1}\frac{\theta_{\sigma_z}(z)^{\frac{m+1}{1-m}}}{\sigma_z},
\end{split}\end{equation}
where $\beta=1-2\hat b$ and $\frac{m+1}{1-m}\beta-1>0$.
Next, we invoke Lemma \ref{Caccioppoli}, takes the form
\begin{equation}\begin{split}\label{energy}
\lambda<\fiint_{Q_{s_z}(z)}|Du^m|^2\,\mathrm{d}x\mathrm{d}t\leq \gamma \frac{\theta_{2s_z}(z)^{\frac{m+1}{1-m}}}{2s_z}+\gamma M_0^{m+1}.
\end{split}\end{equation}
In the case $\sigma_z\in [\frac{1}{12 L}R^2, R^2]$, we apply Lemma \ref{subcylinder} (4), (6) to obtain
\begin{equation*}\begin{split}
\frac{\theta_{\sigma_z}(z)^{\frac{m+1}{1-m}}}{\sigma_z}\leq \frac{12L}{R^2}\left(\frac{R^2}{\sigma_z}\right)^{2\hat a\frac{m+1}{1-m}}
\theta_{R^2}(z)^{\frac{m+1}{1-m}}\leq c\frac{1}{R^2}.
\end{split}\end{equation*}
Combining this with \eqref{thetadecay} and \eqref{energy} we find that
\begin{equation*}\begin{split}
\lambda<\fiint_{Q_{s_z}(z)}|Du^m|^2\,\mathrm{d}x\mathrm{d}t\leq c\frac{1}{R^2}+\gamma M_0^{m+1},
\end{split}\end{equation*}
which proves the estimate \eqref{reverse holder final}. Furthermore, we consider the case $\sigma_z\in (L_2s_z,\frac{1}{12 L}R^2)$.
In this case, the cylinder $Q_{\sigma_z}(z)$ is intrinsic and satisfies
\begin{equation*}\fiint_{Q_{\sigma_z}(z)}u^{m+1}\,\mathrm{d}x\mathrm{d}t=\theta_{\sigma_s}(z)^{\frac{m+1}{1-m}}.\end{equation*}
It follows that
\begin{equation*}\begin{split}
\fiint_{Q_{\frac{\sigma_z}{2}}(z)}u^{m+1}\,\mathrm{d}x\mathrm{d}t<\theta_{\frac{\sigma_z}{2}}(z)^{\frac{m+1}{1-m}}
\leq \left(\frac{1}{2}\right)^{\beta\frac{m+1}{1-m}}\theta_{\sigma_z}(z)^{\frac{m+1}{1-m}}
=\left(\frac{1}{2}\right)^{\beta\frac{m+1}{1-m}}
\fiint_{Q_{\sigma_z}(z)}u^{m+1}\,\mathrm{d}x\mathrm{d}t.
\end{split}\end{equation*}
From \cite[Lemma 2.4]{GS}, we find that the cylinder $Q_{\sigma_z}(z)$ is degenerate. This enables us to use \eqref{claimedtheta/s} from the proof of Proposition
\ref{degenerate}. Then, there exists a constant $\hat \gamma=\hat\gamma(n,m,\nu_0,\nu_1)$ such that
\begin{equation*}\begin{split}
\frac{\theta_{\sigma_z}(z)^{\frac{m+1}{1-m}}}{\sigma_z}&\leq \hat\gamma\left(\ \fiint_{Q_{3\sigma_z}(z)}|Du^m|^{2q_1}\,\mathrm{d}x\mathrm{d}t\right)^{\frac{1}{q_1}}+\hat \gamma M_0^2+
1
\leq \hat\gamma \lambda+\hat\gamma M_0^2+
1,
\end{split}\end{equation*}
since $s_z^\prime<3\sigma_z<L^{-1}R^2$.
Combining this with \eqref{thetadecay} and \eqref{energy} we finally arrive at
\begin{equation}\begin{split}\label{lambdalambda}
\lambda<&\fiint_{Q_{s_z}(z)}|Du^m|^2\,\mathrm{d}x\mathrm{d}t\leq  \gamma \left(\frac{2}{L_2}\right)^{\frac{m+1}{1-m}\beta-1}\frac{\theta_{\sigma_z}(z)^{\frac{m+1}{1-m}}}{\sigma_z}
+\gamma M_0^{m+1}
\\&\leq  \hat \gamma \gamma \left(\frac{2}{L_2}\right)^{\frac{m+1}{1-m}\beta-1}\lambda+cM_0^2+1.
\end{split}\end{equation}
In \eqref{lambdalambda} we choose
$L_2=2(2\gamma\hat\gamma)^{\frac{1}{\frac{m+1}{1-m}\beta-1}}$
and this determines the constant
\begin{equation*}\begin{split}L=5L_2+10\chi=10(2\gamma\hat\gamma)^{\frac{1}{\frac{m+1}{1-m}\beta-1}}+10\chi.
\end{split}\end{equation*}
Therefore, we can reabsorb the first term on the right-hand side of \eqref{lambdalambda} into the left and this proves the
estimate \eqref{reverse holder final}. 

On the other hand, for such a choice of $L$, we see immediately that 
\begin{equation*}3\chi s_z\leq 6\chi s_z^{\prime}
\leq 6\chi L^{-1}R^2\leq R^2\quad\text{and}\quad
Q_{3\chi s_z}(z)\subset Q_{6\chi s_z^\prime}(z)\subset Q_{L_1 s_z^\prime}(z)\subset\hat Q_{\sigma_2}.\end{equation*}
Finally, we come to the proof of
\eqref{reverse inequality}. To this end, we have to distinguish two cases, whether $3\chi s_z<L^{-1}R^2$, or $3\chi s_z\geq L^{-1}R^2$. In the case $3\chi s_z<L^{-1}R^2$, the inequality \eqref{reverse inequality} follows directly from the definition of $s_z^\prime$. Next, we consider the second case. From Lemma \ref{subcylinder} (4), (6), we obtain
\begin{equation*}\begin{split}
\theta_{3\chi s_z}\leq \left(\frac{R^2}{3\chi s_z}\right)^{2\hat a}  \theta_{R^2}\leq cL^{2\hat a}
\quad\text{and}\quad
\theta_{3\chi s_z}\geq \left(\frac{3\chi s_z}{R^2}\right)^\beta \theta_{R^2}\geq L^{-\beta}.
\end{split}\end{equation*}
This implies that
\begin{equation*}\begin{split}
r(3\chi s_z)=\sqrt{\frac{3\chi s_z}{\theta_{3\chi s_z}}}\geq c^{-\frac{1}{2}}L^{-\hat a-\frac{1}{2}}R
\end{split}\end{equation*}
and consequently
\begin{equation*}\begin{split}
\fiint_{Q_{3\chi s_z}(z)}|Du^m|^2\,\mathrm{d}x\mathrm{d}t\leq c\ \fiint_{Q_{4R,16R^2}}|Du^m|^2\,\mathrm{d}x\mathrm{d}t\leq c\lambda,
\end{split}\end{equation*}
which gives \eqref{reverse inequality}.
This finishes the proof of the proposition.
\end{proof}
With the help of Lemma \ref{maxlemma} and Proposition \ref{reverse holder},
we are now in a position to prove the main result.
The proof follows in a similar manner as the proof of \cite[Theorem 7.3]{GS} and we just sketch the proof.
\begin{proof}[Proof of Theorem \ref{main theorem}]
Let $\lambda\geq \lambda_0^\prime$, where
\begin{equation*}\begin{split}\lambda_0^\prime=\mu_{\sigma_1,\sigma_2,L_1}+\ \fiint_{Q_{4R,16R^2}}|Du^m|^2\,\mathrm{d}x\mathrm{d}t+16c_1 M_0^2+16c_1R^{-2}+16>\lambda_0\end{split}\end{equation*}
and $c_1$ be the constant in \eqref{reverse holder final} from Proposition \ref{reverse holder}.

For any fixed  $z\in\hat Q_{\sigma_1}\cap \big\{T^*(|Du^m|^2)(z)>\lambda\big\}$, let $s_z$ be the positive constant constructed in Proposition \ref{reverse holder}.
We note that the collection
\begin{equation*}\begin{split}\mathcal{F}=\left\{Q_{3s_z}(z):\ z\in \hat Q_{\sigma_1}\cap \big\{T^*(|Du^m|^2)(z)>\lambda\big\}\right\}\end{split}\end{equation*}
forms a covering of the superlevel set $\hat Q_{\sigma_1}\cap \big\{T^*(|Du^m|^2)(z)>\lambda\big\}$. From Lemma \ref{vitali}, there exists a countable subfamily $\{Q_{3s_{z_i}}(z_i)\}_{i=1}^\infty\subset \mathcal{F}$ of pairwise disjoint sub-intrinsic cylinders, such that
$\{Q_{3\chi s_{z_i}}(z_i)\}_{i=1}^\infty$ covers the superlevel set $\hat Q_{\sigma_1}\cap \big\{T^*(|Du^m|^2)(z)>\lambda\big\}$.
We abbreviate $Q_i=Q_{s_{z_i}}(z_i)$, $Q_i^*=Q_{3s_{z_i}}(z_i)$ and $Q_i^{**}=Q_{3\chi s_{z_i}}(z_i)$. Next, we choose
$\eta=(16c_1)^{-1}$.
Then, for each $i=1,2,\cdots$,
we infer from Proposition \ref{reverse holder} (1) that
\begin{equation*}\begin{split}\lambda^{q_1}\leq
\frac{c_1^{q_1}}{|Q_i^*|}\iint_{Q_i^*}|Du^m|^{2q_1}\chi_{\{|Du^m|^2>\eta \lambda\}}\,\mathrm{d}x\mathrm{d}t
+\frac{1}{2}\lambda^{q_1}\end{split}\end{equation*}
and hence
\begin{equation}\begin{split}\label{lambdaQi}\lambda|Q_i^*|\leq
2c_1^{q_1}\lambda^{1-q_1}\iint_{Q_i^*}|Du^m|^{2q_1}\chi_{\{|Du^m|^2>\eta \lambda\}}\,\mathrm{d}x\mathrm{d}t.
\end{split}\end{equation}
Recalling that $\{Q_i^{**}\}_{i=1}^\infty$ is a covering of the set $\hat Q_{\sigma_1}\cap \big\{T^*(|Du^m|^2)(z)>\lambda\big\}$,
we infer from Proposition \ref{reverse holder} (2), Lemma \ref{subcylinder} (4) and \eqref{lambdaQi} that
\begin{equation}\begin{split}\label{final estimate}
\iint_{\hat Q_{\sigma_1}\cap \big\{T^*(|Du^m|^2)(z)>\lambda\big\}}|Du^m|^2\,\mathrm{d}x\mathrm{d}t
\leq
\bar \gamma c_1^{q_1}\lambda^{1-q_1}\iint_{\hat Q_{\sigma_2}}|Du^m|^{2q_1}\chi_{\{|Du^m|^2>\eta \lambda\}}\,\mathrm{d}x\mathrm{d}t,
\end{split}\end{equation}
where the constant $\bar \gamma$ depends only upon $n$, $m$, $\nu_0$ and $\nu_1$.
Moreover, for some $\epsilon\in (0,1)$ to be specified later and $k>\lambda_0^\prime$,
we multiply both sides of \eqref{final estimate} by $\lambda^{-1+\epsilon}$ and integrate over the interval $(\lambda_0^\prime,k)$ with respect to $\lambda$.

To estimate a lower bound for the left-hand side of \eqref{final estimate},
we use the inequality \eqref{maximal} from Lemma \ref{maxlemma} to infer that
$\hat Q_{\sigma_1}\cap\big\{|Du^m|^2(z)>\lambda\big\}\subset \hat Q_{\sigma_1}\cap\big\{T^*(|Du^m|^2)(z)>\lambda\big\}$ and consequently
\begin{equation}\begin{split}\label{final1}
\int_{\lambda_0^\prime}^k&\lambda^{-1+\epsilon}\iint_{\hat Q_{\sigma_1}\cap \big\{T^*(|Du^m|^2)(z)>\lambda\big\}}|Du^m|^2\,\mathrm{d}x\mathrm{d}t
\mathrm{d}\lambda
\\&\geq
\int_{\lambda_0^\prime}^k\lambda^{-1+\epsilon}\iint_{\hat Q_{\sigma_1}\cap \big\{|Du^m|^2>\lambda\big\}}|Du^m|^2\,\mathrm{d}x\mathrm{d}t
\mathrm{d}\lambda
\\&\geq \int_0^k\lambda^{-1+\epsilon}\iint_{\hat Q_{\sigma_1}\cap \big\{|Du^m|^2>\lambda\big\}}|Du^m|^2\,\mathrm{d}x\mathrm{d}t
\mathrm{d}\lambda
-\frac{1}{\epsilon}(\lambda_0^\prime)^\epsilon\iint_{\hat Q_{\sigma_2}}|Du^m|^2\,\mathrm{d}x\mathrm{d}t
\\&=\frac{1}{\epsilon}\iint_{\hat Q_{\sigma_1}}|Du^m|^2\min\left\{k,|Du^m|^2\right\}^\epsilon\,\mathrm{d}x\mathrm{d}t
-\frac{1}{\epsilon}(\lambda_0^\prime)^\epsilon\iint_{\hat Q_{\sigma_2}}|Du^m|^2\,\mathrm{d}x\mathrm{d}t.
\end{split}\end{equation}
Next, we come to the estimate of the right-hand side of \eqref{final estimate}. We apply Fubini's theorem to obtain
\begin{equation}\begin{split}\label{final2}
\bar \gamma c_1^{q_1}&\int_{\lambda_0^\prime}^k\lambda^{\epsilon-q_1}\iint_{\hat Q_{\sigma_2}}|Du^m|^{2q_1}\chi_{\{|Du^m|^2>\eta \lambda\}}\,\mathrm{d}x\mathrm{d}t
\mathrm{d}\lambda
\\&\leq \frac{\bar \gamma c_1^{q_1}}{\epsilon-q_1+1}
\eta^{q_1-\epsilon-1}\iint_{\hat Q_{\sigma_2}}|Du^m|^{2q_1}\min\left\{k,|Du^m|^2\right\}^{\epsilon-q_1+1}\,\mathrm{d}x\mathrm{d}t
\\&\leq \frac{\bar \gamma c_1^{q_1}}{1-q_1}
\eta^{q_1-2}\iint_{\hat Q_{\sigma_2}}|Du^m|^2\min\left\{k,|Du^m|^2\right\}^\epsilon\,\mathrm{d}x\mathrm{d}t.
\end{split}\end{equation}
At this stage, we choose
$\epsilon=\frac{1-q_1}{2\bar \gamma c_1^{q_1}\eta^{q_1-2}}$.
Combining \eqref{final estimate}-\eqref{final2}, we obtain the following estimate
\begin{equation*}\begin{split}
\fiint_{\hat Q_{\sigma_1}}&|Du^m|^2\min\left\{k,|Du^m|^2\right\}^\epsilon\,\mathrm{d}x\mathrm{d}t
\\&\leq \frac{1}{2}\ \fiint_{\hat Q_{\sigma_2}}|Du^m|^2\min\left\{k,|Du^m|^2\right\}^\epsilon\,\mathrm{d}x\mathrm{d}t\\&\quad +
\gamma \left(\frac{1}{(\sigma_2-\sigma_1)^{-\gamma}}
(M_0^2+R^{-2}+1)\right)^\epsilon\ \fiint_{Q_{4R,16R^2}}|Du^m|^2\,\mathrm{d}x\mathrm{d}t
\\&\qquad +\gamma \left(\ \fiint_{Q_{4R,16R^2}}|Du^m|^2\,\mathrm{d}x\mathrm{d}t\right)^{1+\epsilon}.
\end{split}\end{equation*}
Finally, we use the iteration result from \cite[Lemma 2.1]{BDKS}
to reabsorb
the first term on the right-hand side into the left, and the desired estimate \eqref{main result estimate} is proved by letting $k\to\infty$.
This finishes the proof
of Theorem \ref{main theorem}.
\end{proof}
\bibliographystyle{abbrv}

\begin{thebibliography}{9}

\bibitem{BDKS}
\newblock B\"ogelein, V., Duzaar, F., Korte, R. and Scheven, C.:
\newblock The higher integrability of weak solutions of porous medium systems.
\newblock Adv. Nonlinear Anal. 8, 1004-1034 (2019).

\bibitem{BDKS2}
\newblock B\"ogelein, V., Duzaar, F. and Scheven, C.:
\newblock Higher integrability for the singular porous
medium system.
\newblock J. Reine Angew. Math. (2019). DOI: 10.1515/crelle-2019-0038


\bibitem{BLS}
\newblock B\"ogelein, V., Lukkari, T. and Scheven, C.:
\newblock The obstacle problem for the porous medium equation.
\newblock Math. Ann. 363, 455-499 (2015).

\bibitem{BLS2017}
\newblock B\"ogelein, V., Lukkari, T. and Scheven, C.:
\newblock H\"older regularity for degenerate parabolic obstacle problems.
\newblock Ark. Mat. 55, no. 1, 1-39 (2017).

\bibitem{CS18}
\newblock Cho, Y., Scheven, C.:
\newblock H\"older regularity for singular parabolic obstacle problems of porous medium type.
\newblock Int. Math. Res. Not. (2018). https://doi.org/10.1093/imrn/rny073.

\bibitem{CS19}
\newblock Cho, Y., Scheven, C.:
\newblock The self-improving property of higher integrability in the obstacle problem for the porous medium equation.
\newblock Nonlinear Differ. Equ. Appl., 26, 37 (2019).

\bibitem{GS2019}
\newblock Gianazza, U. and Schwarzacher, S.:
\newblock Self-improving property of degenerate parabolic
equations of porous medium-type.
\newblock Am. J. Math. 141 (2), 399-446 (2019).

\bibitem{GS}
\newblock Gianazza, U. and Schwarzacher, S.:
\newblock Self-improving property of the fast diffusion equation.
\newblock J. Funct. Anal., 277, 12, 1-57 (2019).

\bibitem{KL1}
\newblock Kinnunen, J. and Lewis, J. L.:
\newblock Higher integrability for parabolic systems of $p$-Laplacian type.
\newblock Duke Math. J.,
102, 253-271, (2000).


\bibitem{KL2}
\newblock Kinnunen, J. and Lewis, J. L.:
\newblock Very weak solutions of parabolic systems of $p$-Laplacian type.
\newblock Ark. Mat., 40,
105-132, (2002).


\bibitem{V}
\newblock V\'azquez, J. L.:
\newblock The porous medium equation.
\newblock Oxford Mathematical Monographs. The Clarendon Press Oxford University Press, Oxford, (2007).

\end{thebibliography}

\end{document}